\newtheorem{theorem}{Theorem}[section]
\newtheorem{lemma}[theorem]{Lemma}
\newtheorem{proposition}[theorem]{Proposition}
\newtheorem{corollary}[theorem]{Corollary}
\theoremstyle{definition}
 \theoremstyle{remark}
\newtheorem{remark}[theorem]{Remark}
 \numberwithin{equation}{section}
\begin{document}

\title[sharp Sobolev trace inequalities for higher order derivatives]{sharp Sobolev trace inequalities for higher order derivatives}


\author{Qiaohua  Yang}
\address{School of Mathematics and Statistics, Wuhan University, Wuhan, 430072, People's Republic of China}

\email{qhyang.math@gmail.com; qhyang.math@whu.edu.cn}

\thanks{The work was partially supported by the National Natural
Science Foundation of China(No.11201346).}


\subjclass[2000]{Primary  53A30; 35J30}



\keywords{Sobolev trace inequality; Lebedev-Milin inequality; adapted metrics; fractional GJMS operators}

\begin{abstract}
Motivated by a recent work of Ache and Chang concerning the sharp Sobolev trace inequality and  Lebedev-Milin inequalities of order four
on the Euclidean unit ball,
we derive such inequalities on the Euclidean unit ball for higher order derivatives.
 By using,  among other things,  the scattering
theory on hyperbolic spaces and the generalized
Poisson kernel, we obtain the explicit formulas of
extremal functions of such inequations. Moreover, we also derive the sharp  trace Sobolev inequalities on half spaces
for higher order derivatives.
  Finally, we  compute the explicit formulas of
 adapted
metric, introduced by Case and Chang,  on the Euclidean unit ball,  which is of independent interest.
   \end{abstract}

\maketitle


\section{Introduction}
It is well known that the Sobolev inequalities and sharp constants play  an important role in problems in analysis and conformal geometry.
An elementary example is the following  classical Sobolev inequality on the standard sphere $(\mathbb{S}^{n},g_{\mathbb{S}^{n}})$ for $n\geq3$:
\begin{equation}\label{1.1}
\begin{split}
\left(\frac{1}{\omega_{n}}\int_{\mathbb{S}^{n}}|f|^{\frac{2n}{n-2}}d\sigma\right)^{\frac{n-2}{n}}\leq\frac{4}{n(n-2)\omega_{n}}
\int_{\mathbb{S}^{n}}|\widetilde{\nabla}f|^{2}d\sigma+\frac{1}{\omega_{n}}\int_{\mathbb{S}^{n}}|f|^{2}d\sigma,
\end{split}
\end{equation}
where $d\sigma$ is Lebesgue measure on $\mathbb{S}^{n}$, $\omega_{n}=\frac{2\pi^{\frac{n+1}{2}}}{\Gamma(\frac{n+1}{2})}$  is the volume of
$\mathbb{S}^{n}$ and $\widetilde{\nabla}$ is the sphere  gradient on $(\mathbb{S}^{n},g_{\mathbb{S}^{n}})$.
Using the conformal invariance, one observes inequality (\ref{1.1}) is equivalent to the sharp Sobolev inequality on $\mathbb{R}^{n}$
given as follows
\begin{equation}\label{1.2}
\begin{split}
\frac{\Gamma(\frac{n+2}{2})}{\Gamma(\frac{n-2}{2})}\omega^{\frac{2}{n}}_{n}\left(
\int_{\mathbb{R}^{n}}|f|^{\frac{2n}{n-2}}dx\right)^{\frac{n-2}{n}}\leq
\int_{\mathbb{R}^{n}}|\nabla f|^{2}dx,
\end{split}
\end{equation}
where $\nabla f$ is the gradient of $f$ with respect to the Euclidean metric.
In a limiting case, namely $n=2$, (\ref{1.1}) takes  the form of  Moser-Onfori
inequality (\cite{mo,on})
  \begin{equation}\label{1.3}
  \begin{split}
\log \left(\frac{1}{4\pi}\int_{\mathbb{S}^{2}}e^{f}d\sigma\right)\leq
\frac{1}{16\pi}
\int_{\mathbb{S}^{2}}|\widetilde{\nabla}f|^{2}d\sigma+\frac{1}{4\pi}\int_{\mathbb{S}^{2}}|f|^{2}d\sigma.
\end{split}
\end{equation}
Inequality (\ref{1.3}) has been widely used in   analysis and conformal geometry, in particular,
in the problem of prescribing Gaussian curvature on the sphere (see \cite{ch,os1,os2,os3}).

Another example is the Sobolev trace inequality.  Denote by $\mathbb{B}^{n+1}$ the unit ball on Euclidean space $\mathbb{R}^{n+1}$ with $\mathbb{S}^{n}$ as the boundary.
The Sobolev trace inequality on $\mathbb{B}^{n+1}$ reads as follow (see \cite{es1}): for $f\in C^{\infty}(\mathbb{S}^{n})$ and $n\geq 2$,
  \begin{equation}\label{1.4}
  \begin{split}
\frac{n-1}{2}\omega^{\frac{1}{n}}_{n}\left(\int_{\mathbb{S}^{n}}|f|^{\frac{2n}{n-1}}d\sigma\right)^{\frac{n-1}{n}}\leq
\int_{\mathbb{B}^{n+1}}|\nabla v|^{2}dx+\frac{n-1}{2}\int_{\mathbb{S}^{n}}|f|^{2}d\sigma,
\end{split}
\end{equation}
where $v$ is  a smooth extension of $f$ to $\mathbb{B}^{n+1}$. By
the conformal invariance, one observes inequality (\ref{1.4}) is also equivalent to the sharp Sobolev inequality on half space
(see \cite{es1}):
\begin{equation}\label{1.5}
\begin{split}
\frac{\Gamma(\frac{n+1}{2})}{\Gamma(\frac{n-1}{2})}\omega^{\frac{1}{n}}_{n}\left(
\int_{\mathbb{R}^{n}}|U(x,0)|^{\frac{2n}{n-1}}dx\right)^{\frac{n-1}{n}}\leq
\int_{\mathbb{R}_{+}^{n+1}}|\nabla U(x,y)|^{2}dxdy,
\end{split}
\end{equation}
where $\mathbb{R}_{+}^{n+1}=\mathbb{R}^{n}\times\mathbb{R}_{+}=\{(x,y): x\in\mathbb{R}^{n},\;y>0\}$.
In a limiting case, namely $n=1$, (\ref{1.4}) becomes the classical   Lebedev-Milin  inequality (\cite{l})
  \begin{equation}\label{1.6}
  \begin{split}
\log \left(\frac{1}{\pi}\int_{\mathbb{S}^{1}}e^{f}d\sigma\right)\leq
\frac{1}{4\pi}
\int_{\mathbb{B}^{2}}|\nabla v|^{2}dx+\frac{1}{\pi}\int_{\mathbb{S}^{1}}|f|^{2}d\sigma.
\end{split}
\end{equation}
Such Sobolev trace inequalities and Lebedev-Milin  inequality have also been widely used in analysis and geometry,
such as Yamabe problem
on manifolds with boundary (see \cite{es2}), the Bieberbach
conjecture (\cite{d}) and the compactness of isospectral planar domains (see \cite{os1,os2,os3}).

Notice that the operators involved in (\ref{1.1})-(\ref{1.6}), either the Laplace operator or
the conformal Laplace operator, are order two. Recently, the role played by
these  operators of order two has been  extended to operators of higher order, such as Paneiz operator, poly-Laplacian and GJMS operator.
In particular, Beckner \cite{be} established
 the higher order Sobolev inequality on  the standard sphere $(\mathbb{S}^{n},g_{\mathbb{S}^{n}})$. We state it as follow
\begin{theorem} [Beckner] \label{be1}
Let $\widetilde{\Delta}$ be the  Laplace-Beltrame operator on the standard sphere $(\mathbb{S}^{n},g_{\mathbb{S}^{n}})$
and  define
 \begin{equation}\label{1.7}
  \begin{split}
\mathcal{P}_{2\gamma}=\frac{\Gamma(B+\frac{1}{2}+\gamma)}{\Gamma(B+\frac{1}{2}-\gamma)},\;\;B=\sqrt{-\widetilde{\Delta}+\frac{(n-1)^{2}}{4}}.
\end{split}
\end{equation}
Then for $0<\gamma<\frac{n}{2}$,
\begin{equation}\label{1.8}
\begin{split}
\frac{\Gamma(\frac{n+2\gamma}{2})}{\Gamma(\frac{n-2\gamma}{2})}\omega^{\frac{2\gamma}{n}}_{n}\left(
\int_{\mathbb{S}^{n}}|f|^{\frac{2n}{n-2\gamma}}d\sigma\right)^{\frac{n-2\gamma}{n}}\leq
\int_{\mathbb{S}^{n}}f \mathcal{P}_{2\gamma}fd\sigma.
\end{split}
\end{equation}
Equality holds only for functions of the form
$$c|1-\langle\zeta,\xi\rangle|^{\frac{2\gamma-n}{2}},\;\;c\in \mathbb{R},\; \zeta\in \mathbb{B}^{n+1},\;\;\xi\in
\mathbb{S}^{n}.$$
If $\gamma=\frac{n}{2}$, then
\begin{equation}\label{1.9}
\begin{split}
\ln\left(
\frac{1}{\omega_{n}}\int_{\mathbb{S}^{n}}e^{f-\overline{f}}d\sigma\right)\leq\frac{1}{2n!\omega_{n}}
\int_{\mathbb{S}^{n}}f \mathcal{P}_{n}fd\sigma,
\end{split}
\end{equation}
where $\overline{f}=\frac{1}{\omega_{n}}\int_{\mathbb{S}^{n}}fd\sigma$ is the  integral average of $f$ on $\mathbb{S}^{n}$. Equality holds only for functions of the form
$$-n\ln|1-\langle\zeta,\xi\rangle|+c,\;\;\; \zeta\in \mathbb{B}^{n+1},\; \xi\in
\mathbb{S}^{n},\; c\in \mathbb{R}.$$
\end{theorem}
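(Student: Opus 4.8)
The plan is to transplant inequality~(\ref{1.8}) from $\mathbb{S}^{n}$ to $\mathbb{R}^{n}$ by inverse stereographic projection $\mathcal{S}\colon\mathbb{R}^{n}\to\mathbb{S}^{n}\setminus\{N\}$, where it becomes the sharp fractional Sobolev inequality, and then to derive the latter from the sharp Hardy--Littlewood--Sobolev (HLS) inequality by duality. On spherical harmonics of degree $k$ the operator $-\widetilde{\Delta}+\tfrac{(n-1)^{2}}{4}$ has eigenvalue $\bigl(k+\tfrac{n-1}{2}\bigr)^{2}$, so $\mathcal{P}_{2\gamma}$ acts there by $\frac{\Gamma(k+\frac{n}{2}+\gamma)}{\Gamma(k+\frac{n}{2}-\gamma)}$, which is exactly what one obtains by conjugating the Fourier multiplier $\abs{\xi}^{2\gamma}$ of $(-\Delta)^{\gamma}$ through the intertwining action of $O(n+1,1)$. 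Concretely, writing $w=\bigl(\tfrac{2}{1+\abs{x}^{2}}\bigr)^{n}$ (so $d\sigma=w\,dx$ under $\mathcal{S}$) and $u=w^{\frac{n-2\gamma}{2n}}(f\circ\mathcal{S})$, one gets
\[
\int_{\mathbb{S}^{n}}f\,\mathcal{P}_{2\gamma}f\,d\sigma=\int_{\mathbb{R}^{n}}u\,(-\Delta)^{\gamma}u\,dx,\qquad\int_{\mathbb{S}^{n}}\abs{f}^{\frac{2n}{n-2\gamma}}d\sigma=\int_{\mathbb{R}^{n}}\abs{u}^{\frac{2n}{n-2\gamma}}dx,
\]
so that~(\ref{1.8}) is equivalent to $\int_{\mathbb{R}^{n}}\abs{(-\Delta)^{\gamma/2}u}^{2}dx\ge S_{n,\gamma}\bigl(\int_{\mathbb{R}^{n}}\abs{u}^{\frac{2n}{n-2\gamma}}dx\bigr)^{\frac{n-2\gamma}{n}}$; the constant $\frac{\Gamma(\frac{n+2\gamma}{2})}{\Gamma(\frac{n-2\gamma}{2})}\omega_{n}^{2\gamma/n}$ is recovered from the sharp constant $S_{n,\gamma}$ and the Jacobian of $\mathcal{S}$ (the Legendre duplication formula makes the two expressions match).

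Since $(-\Delta)^{-\gamma}$ is convolution with a positive multiple of $\abs{x}^{2\gamma-n}$, a Legendre-type duality in $L^{p}$ identifies the Sobolev inequality above with the sharp HLS inequality
\[
\Bigl|\int_{\mathbb{R}^{n}}\int_{\mathbb{R}^{n}}\abs{x-y}^{2\gamma-n}\,g(x)\,g(y)\,dx\,dy\Bigr|\le C_{n,\gamma}\Bigl(\int_{\mathbb{R}^{n}}\abs{g}^{\frac{2n}{n+2\gamma}}dx\Bigr)^{\frac{n+2\gamma}{n}}.
\]
For this I would follow Lieb's method: symmetric decreasing rearrangement reduces to radial $g$; the functional is invariant under the inversion $x\mapsto x/\abs{x}^{2}$; and a compactness argument for radial maximizing sequences (or, alternatively, the competing-symmetries argument) produces the sharp $C_{n,\gamma}$ and shows every maximizer is $g(x)=c\,(\lambda^{2}+\abs{x-x_{0}}^{2})^{-\frac{n+2\gamma}{2}}$. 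Undoing the duality, the Sobolev extremals are $u(x)=c\,(\lambda^{2}+\abs{x-x_{0}}^{2})^{-\frac{n-2\gamma}{2}}$, $c\in\mathbb{R}$, $\lambda>0$, $x_{0}\in\mathbb{R}^{n}$. Finally, the elementary identity $1-\langle\zeta,\mathcal{S}(x)\rangle=(1-\zeta_{n+1})\,(\lambda^{2}+\abs{x-x_{0}}^{2})(1+\abs{x}^{2})^{-1}$, valid for $\zeta=(\zeta',\zeta_{n+1})\in\mathbb{B}^{n+1}$ with $x_{0}=\zeta'/(1-\zeta_{n+1})$ and $\lambda^{2}=(1-\abs{\zeta}^{2})/(1-\zeta_{n+1})^{2}$, shows that these $u$ pull back under $\mathcal{S}$ to the functions $c\,\abs{1-\langle\zeta,\xi\rangle}^{\frac{2\gamma-n}{2}}$; the base case $\zeta=0$ gives the constants.

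For the endpoint $\gamma=\tfrac{n}{2}$, I would apply~(\ref{1.8}) with $\gamma<\tfrac{n}{2}$ to the test function $h_{\gamma}=1+\tfrac{n-2\gamma}{2n}(f-\overline{f})$. With $g=f-\overline{f}$ and $p=\tfrac{2n}{n-2\gamma}$, using $\int_{\mathbb{S}^{n}}\mathcal{P}_{2\gamma}g\,d\sigma=0$ together with $\abs{1+g/p}^{p}\to e^{g}$, the right-hand side of~(\ref{1.8}) becomes $\frac{\Gamma(\frac{n}{2}+\gamma)}{\Gamma(\frac{n}{2}-\gamma)}\omega_{n}+\frac{1}{p^{2}}\int_{\mathbb{S}^{n}}g\,\mathcal{P}_{2\gamma}g\,d\sigma$ and the left-hand side becomes $\frac{\Gamma(\frac{n}{2}+\gamma)}{\Gamma(\frac{n}{2}-\gamma)}\bigl(\omega_{n}+\tfrac{2}{p}\omega_{n}\ln\bigl(\tfrac{1}{\omega_{n}}\int_{\mathbb{S}^{n}}e^{g}d\sigma\bigr)+O(p^{-2})\bigr)$. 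The leading terms $\frac{\Gamma(\frac{n}{2}+\gamma)}{\Gamma(\frac{n}{2}-\gamma)}\omega_{n}$ cancel; multiplying what remains by $p^{2}$, using $2p\,\frac{\Gamma(\frac{n}{2}+\gamma)}{\Gamma(\frac{n}{2}-\gamma)}\to2n!$ and $\mathcal{P}_{2\gamma}g\to\mathcal{P}_{n}g$ on mean-zero functions (note $\mathcal{P}_{n}1=0$), and letting $\gamma\uparrow\tfrac{n}{2}$ yields~(\ref{1.9}). The subcritical extremals $c\,\abs{1-\langle\zeta,\xi\rangle}^{\frac{2\gamma-n}{2}}$, after the same rescaling and passage to the limit, become $-n\ln\abs{1-\langle\zeta,\xi\rangle}+c$. (Alternatively,~(\ref{1.9}) follows by transplanting to $\mathbb{R}^{n}$ and invoking the sharp logarithmic HLS inequality directly.)

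The main obstacle is the sharp HLS inequality together with the classification of its extremals. Beyond rearrangement, the existence of a maximizer requires a genuinely delicate compactness/concentration argument adapted to the dilation- and inversion-invariance of the functional (or the competing-symmetries argument), and characterising the extremals up to the conformal group amounts to classifying the positive solutions of the associated Euler--Lagrange integral equation. The endpoint step is softer but still needs uniform control of the $L^{p}$ norms and of the sharp constant as $\gamma\uparrow\tfrac{n}{2}$, and some care in identifying the limit of the quadratic form $\int_{\mathbb{S}^{n}}g\,\mathcal{P}_{2\gamma}g\,d\sigma$.
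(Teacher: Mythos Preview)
The paper does not prove this theorem at all: it is quoted in the introduction as a known result due to Beckner (reference \cite{be}) and is used as a black box in the proofs of Theorems~\ref{th1.6}--\ref{th1.8}. So there is no ``paper's own proof'' to compare your proposal against.

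That said, your outline is essentially the classical route to Beckner's theorem: the conformal equivalence between $\mathcal{P}_{2\gamma}$ on $\mathbb{S}^{n}$ and $(-\Delta)^{\gamma}$ on $\mathbb{R}^{n}$ via stereographic projection, the duality with the sharp Hardy--Littlewood--Sobolev inequality, Lieb's existence/uniqueness of HLS maximizers, and the endpoint $\gamma\uparrow n/2$ limiting argument. This is correct in spirit and is how the result is usually established. Two minor cautions: first, the intertwining identity $\int_{\mathbb{S}^{n}} f\,\mathcal{P}_{2\gamma}f\,d\sigma=\int_{\mathbb{R}^{n}} u\,(-\Delta)^{\gamma}u\,dx$ is the nontrivial analytic input (it is proved by Funk--Hecke/spherical harmonic computations in Beckner's paper, not by a one-line conformal covariance remark), so you should not treat it as a formality; second, your endpoint computation is morally right but would need a slightly more careful justification of the interchange of the limit $\gamma\to n/2$ with the quadratic form and the $L^{p}$ norm, and of why the limiting extremals are \emph{all} of the stated form (the limiting argument only produces candidates; the equality characterisation for \eqref{1.9} typically requires a separate argument, e.g.\ via the conformal group action or the logarithmic HLS equality case).
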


By
the conformal invariance, one observes inequality (\ref{1.8}) is  equivalent to the following sharp Sobolev inequality on
$\mathbb{R}^{n}$ (\cite{lie}, see also \cite{co}):
\begin{theorem}\label{be2}
Let $0<\gamma<\frac{n}{2}$. Then
\begin{equation}\label{1.10}
\begin{split}
\frac{\Gamma(\frac{n+2\gamma}{2})}{\Gamma(\frac{n-2\gamma}{2})}\omega^{\frac{2\gamma}{n}}_{n}\left(
\int_{\mathbb{R}^{n}}|f|^{\frac{2n}{n-2\gamma}}dx\right)^{\frac{n-2\gamma}{n}}\leq
\int_{\mathbb{R}^{n}}|(-\Delta)^{\gamma}f |^{2}dx.
\end{split}
\end{equation}
Equality holds only for functions of the form
$$c(\lambda^{2}+|x-x_{0}|^{2})^{-\frac{n-2\gamma}{2}}, x\in\mathbb{R}^{n},$$
where $c\in\mathbb{R}$, $\lambda>0$ and $x_{0}$ is some point in $\mathbb{R}^{n}$.
\end{theorem}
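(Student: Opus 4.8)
The plan is to obtain Theorem~\ref{be2} from Beckner's Theorem~\ref{be1}: inequality~\eqref{1.10} is nothing but inequality~\eqref{1.8} read in stereographic coordinates, the transfer being governed by the conformal covariance of $\mathcal{P}_{2\gamma}$. Both quantities in \eqref{1.8} are conformally natural — the bilinear energy $\int_{\mathbb{S}^{n}}f\,\mathcal{P}_{2\gamma}f\,d\sigma$ is a conformally invariant pairing for the operator of order $2\gamma$, while $\bigl(\int_{\mathbb{S}^{n}}|f|^{2n/(n-2\gamma)}d\sigma\bigr)^{(n-2\gamma)/n}$ becomes invariant once $f$ is given the conformal weight $\tfrac{n-2\gamma}{2}$ — so after substituting a conformally weighted pullback, \eqref{1.8} turns verbatim into \eqref{1.10}, constant and extremal family included.

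Concretely, I would fix the stereographic projection $\mathcal{S}\colon \mathbb{S}^{n}\setminus\{N\}\to\mathbb{R}^{n}$, for which $\mathcal{S}^{\ast}g_{\mathbb{R}^{n}}=\Lambda^{2}g_{\mathbb{S}^{n}}$ with $\Lambda=\tfrac{1}{2}\bigl(1+|\mathcal{S}|^{2}\bigr)$, so that $d\sigma=\bigl(\tfrac{2}{1+|x|^{2}}\bigr)^{n}dx$ under the identification $\xi\leftrightarrow x=\mathcal{S}(\xi)$. Given $u$ on $\mathbb{R}^{n}$, set $f=\Lambda^{(n-2\gamma)/2}\,(u\circ\mathcal{S})$ on $\mathbb{S}^{n}$. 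The exponent $\tfrac{n-2\gamma}{2}$ is chosen precisely so that $\int_{\mathbb{S}^{n}}|f|^{2n/(n-2\gamma)}d\sigma=\int_{\mathbb{R}^{n}}|u|^{2n/(n-2\gamma)}dx$. The one substantial input is the conformal intertwining identity
\[
\mathcal{P}_{2\gamma}\bigl(\Lambda^{(n-2\gamma)/2}\,(v\circ\mathcal{S})\bigr)=\Lambda^{(n+2\gamma)/2}\,\bigl(P_{2\gamma}v\bigr)\circ\mathcal{S},
\]
where $P_{2\gamma}$ is the Euclidean conformal operator of order $2\gamma$ normalized to have leading symbol $|\xi|^{2\gamma}$, so that $\int_{\mathbb{R}^{n}}v\,P_{2\gamma}v\,dx=\int_{\mathbb{R}^{n}}|(-\Delta)^{\gamma}v|^{2}dx$ in the notation of \eqref{1.10}. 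Pairing this identity with $f$ and changing variables, the product of the two conformal factors is $\Lambda^{n}$, which cancels the Jacobian factor $\Lambda^{-n}$ of the substitution, yielding $\int_{\mathbb{S}^{n}}f\,\mathcal{P}_{2\gamma}f\,d\sigma=\int_{\mathbb{R}^{n}}|(-\Delta)^{\gamma}u|^{2}dx$. Since $u\mapsto f$ is a bijection between the natural function spaces (smooth functions, then completion in the respective norms), \eqref{1.8} and \eqref{1.10} are equivalent, with the identical constant $\tfrac{\Gamma((n+2\gamma)/2)}{\Gamma((n-2\gamma)/2)}\,\omega_{n}^{2\gamma/n}$.

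The equality cases then follow by transporting Beckner's. A direct computation in stereographic coordinates gives, for $\zeta=(\zeta',\zeta_{n+1})\in\mathbb{B}^{n+1}$,
\[
1-\langle\zeta,\xi\rangle=\frac{1-\zeta_{n+1}}{1+|x|^{2}}\bigl(\lambda^{2}+|x-x_{0}|^{2}\bigr),\qquad x_{0}=\frac{\zeta'}{1-\zeta_{n+1}},\quad \lambda^{2}=\frac{1-|\zeta|^{2}}{(1-\zeta_{n+1})^{2}},
\]
and $\zeta\mapsto(x_{0},\lambda)$ is a bijection from $\mathbb{B}^{n+1}$ onto $\mathbb{R}^{n}\times(0,\infty)$. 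Hence $|1-\langle\zeta,\xi\rangle|^{(2\gamma-n)/2}$ equals $\Lambda^{(n-2\gamma)/2}$ times a constant multiple of $(\lambda^{2}+|x-x_{0}|^{2})^{-(n-2\gamma)/2}$, so the function $u$ corresponding under $u\mapsto f$ to a sphere extremal $c\,|1-\langle\zeta,\xi\rangle|^{(2\gamma-n)/2}$ is exactly a Euclidean bubble $c'(\lambda^{2}+|x-x_{0}|^{2})^{-(n-2\gamma)/2}$; the centre $\zeta=0$ gives $(1+|x|^{2})^{-(n-2\gamma)/2}$ and the rest are its dilates and translates. This is the asserted classification for \eqref{1.10}.

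The genuinely non-formal step — and the only real obstacle — is justifying the intertwining identity when $\gamma$ is not an integer, i.e. proving that the spectrally defined operator $\mathcal{P}_{2\gamma}$ of \eqref{1.7} is conformally covariant with the stated weights and is carried by $\mathcal{S}$ to the Euclidean fractional operator normalized as above. For integer $\gamma$ this is the classical product formula for the GJMS operators on the round sphere. For general $\gamma\in(0,n/2)$ one can either verify it on spherical harmonics — matching the explicit eigenvalues of $\mathcal{P}_{2\gamma}$ read off from \eqref{1.7} against the action of the Euclidean operator on the corresponding functions via the Funk--Hecke formula — or appeal to the scattering/Poisson-kernel realization of the fractional GJMS operators on hyperbolic space, which is in any case the machinery developed in the body of this paper. (Alternatively, \eqref{1.10} is, by duality, Lieb's sharp Hardy--Littlewood--Sobolev inequality for the Riesz potential $(-\Delta)^{-\gamma}$, and could be derived from there; but the conformal route above is the one consistent with the methods used throughout.)
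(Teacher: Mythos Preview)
Your proposal is correct and matches the paper's own treatment: the paper does not give a proof of Theorem~\ref{be2} at all, but simply cites it (Lieb, Cotsiolis--Tavoularis) and remarks that ``by the conformal invariance, one observes inequality~\eqref{1.8} is equivalent to'' \eqref{1.10}. Your stereographic-projection argument is precisely a fleshing out of that one-line remark, and in fact supplies considerably more detail than the paper itself.
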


Recently, Ache and Chang \cite{ac} established sharp trace Sobolev inequality 
of order four on $\mathbb{B}^{n+1}$ for $n\geq3$. As an aplication, they used this inequality
 to characterize the extremal metric
of the main term in the log-determinant formula corresponding to the conformal
Laplacian coupled with the boundary Robin operator on $\mathbb{B}^{4}$ (see \cite{chq1,chq2}). We state the results as follow.
\begin{theorem} [Ache and Chang] \label{th1.1}
Given $f\in C^{\infty}(\mathbb{S}^{n})$ with $n>3$, suppose $v$ is a smooth extension of $f$ to the unit
ball $\mathbb{B}^{n+1}$ which also satisfies the Neumann boundary condition
  \begin{equation}\label{1.11}
  \begin{split}
\left.\frac{\partial v}{\partial n}\right|_{\mathbb{S}^{n}}=-\frac{n-3}{2}f.
\end{split}
\end{equation}
Then we have the inequality
  \begin{equation}\label{1.12}
  \begin{split}
2\frac{\Gamma(\frac{n+3}{2})}{\Gamma(\frac{n-3}{2})}\omega^{\frac{3}{n}}_{n}&\left(\int_{\mathbb{S}^{n}}|f|^{\frac{2n}{n-3}}d\sigma\right)^{\frac{n-3}{n}}
\\ &\leq
\int_{\mathbb{B}^{n+1}}|\Delta v|^{2}dx+2\int_{\mathbb{S}^{n}}|\widetilde{\nabla}f|^{2}d\sigma+\frac{(n+1)(n-3)}{2}\int_{\mathbb{S}^{n}}|f|^{2}d\sigma,
\end{split}
\end{equation}
where $\Delta f$ is the Laplacian of $f$ with respect to the Euclidean metric.
Moreover, equality holds if and only if $v$ is a biharmonic
extension of a function of the form $c|1-\langle z_{0},\xi\rangle|^{\frac{3-n}{4}}$, where $c$ is a constant, $\xi\in\mathbb{S}^{n}$,
$z_{0}$ is some point in $\mathbb{B}^{n+1}$, and $v$ satisfies the boundary condition (\ref{1.11}).
When $f=1$, inequality (\ref{1.12}) is
attained by the function $v(x)=1+\frac{n-3}{4}(1-|x|^{2})$.
\end{theorem}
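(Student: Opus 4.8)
The plan is to reduce \eqref{1.12} to Beckner's sharp inequality (Theorem~\ref{be1}) for the operator $\mathcal{P}_{2\gamma}$ at $\gamma=\tfrac{3}{2}$, by combining a short variational argument with an explicit spherical-harmonic computation. First observe that every term on the right-hand side of \eqref{1.12} other than $\int_{\mathbb{B}^{n+1}}|\Delta v|^2\,dx$ depends only on $f=v|_{\mathbb{S}^n}$, which is fixed, and that the admissible extensions also have fixed Neumann data $\partial_n v|_{\mathbb{S}^n}=-\tfrac{n-3}{2}f$. Hence it suffices to minimize $\int_{\mathbb{B}^{n+1}}|\Delta v|^2\,dx$ over extensions with these prescribed Cauchy data. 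Writing $v=v_0+w$, where $v_0$ is the \emph{biharmonic} extension ($\Delta^2v_0=0$) with the same Dirichlet and Neumann data, so that $w$ and $\partial_n w$ vanish on $\mathbb{S}^n$, Green's identity gives $\int_{\mathbb{B}^{n+1}}\Delta v_0\,\Delta w\,dx=\int_{\mathbb{B}^{n+1}}w\,\Delta^2v_0\,dx=0$; therefore $\int|\Delta v|^2=\int|\Delta v_0|^2+\int|\Delta w|^2\ge\int|\Delta v_0|^2$, with equality iff $w$ is harmonic, hence (vanishing on $\mathbb{S}^n$) iff $w\equiv0$. So we may assume $v=v_0$ is biharmonic, and it remains to show that the right-hand side of \eqref{1.12} equals $2\int_{\mathbb{S}^n}f\,\mathcal{P}_3 f\,d\sigma$.

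For this, expand $f=\sum_{k\ge0}f_k$ into spherical harmonics, $\widetilde{\Delta}f_k=-k(k+n-1)f_k$. Using $\Delta=\partial_r^2+\tfrac{n}{r}\partial_r+\tfrac{1}{r^2}\widetilde{\Delta}$ on $\mathbb{R}^{n+1}$, the biharmonic functions of angular degree $k$ that are regular at the origin are spanned by $r^k f_k$ (harmonic) and $r^{k+2}f_k$, and $\Delta(r^{k+2}f_k)=2(2k+n+1)\,r^k f_k$. Matching the two boundary conditions, the biharmonic extension of $f_k$ is $v_k=(a_k r^k+b_k r^{k+2})f_k$ with $a_k+b_k=1$ and $k a_k+(k+2)b_k=-\tfrac{n-3}{2}$, i.e.\ $b_k=-\tfrac{2k+n-3}{4}$ and $a_k=\tfrac{2k+n+1}{4}$. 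By orthogonality of $\{r^k f_k\}$ in $L^2(\mathbb{B}^{n+1})$ and of $\{f_k\}$ in $L^2(\mathbb{S}^n)$, all three terms split over $k$; with $\|\cdot\|$ the $L^2(\mathbb{S}^n)$-norm and $\int_0^1 r^{2k+n}\,dr=\tfrac{1}{2k+n+1}$ one gets $\int_{\mathbb{B}^{n+1}}|\Delta v|^2=\sum_k 4b_k^2(2k+n+1)\|f_k\|^2$, $\int_{\mathbb{S}^n}|\widetilde{\nabla}f|^2=\sum_k k(k+n-1)\|f_k\|^2$, and $\int_{\mathbb{S}^n}|f|^2=\sum_k\|f_k\|^2$. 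Put $s_k=k+\tfrac{n-1}{2}$, so that $B f_k=s_k f_k$; substituting $b_k$ and using $(n+1)(n-3)=(n-1)^2-4$, the three coefficients collapse:
\[
4b_k^2(2k+n+1)+2k(k+n-1)+\tfrac{(n+1)(n-3)}{2}=2\,s_k(s_k-1)(s_k+1).
\]
Since $\mathcal{P}_3$ — Beckner's $\mathcal{P}_{2\gamma}$ at $\gamma=\tfrac{3}{2}$ — acts on $f_k$ by $\tfrac{\Gamma(s_k+2)}{\Gamma(s_k-1)}=s_k(s_k-1)(s_k+1)$, this proves that the right-hand side of \eqref{1.12} equals $2\int_{\mathbb{S}^n}f\,\mathcal{P}_3 f\,d\sigma$. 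Applying Theorem~\ref{be1} with $\gamma=\tfrac{3}{2}$ (admissible because $n>3$) and multiplying by $2$ yields \eqref{1.12}. The equality discussion follows: the reduction forces $v$ to be biharmonic and $f$ to be an extremal of Beckner's inequality at $\gamma=\tfrac{3}{2}$, which are precisely the functions displayed in the statement, with $v$ their biharmonic extension subject to \eqref{1.11}; and when $f\equiv 1$ only the $k=0$ mode is present, so $v=a_0+b_0|x|^2=\tfrac{n+1}{4}-\tfrac{n-3}{4}|x|^2=1+\tfrac{n-3}{4}(1-|x|^2)$.

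I expect the only genuinely delicate point to be the algebra in the second step: checking that the three a priori unrelated coefficients combine \emph{exactly} into $2s_k(s_k-1)(s_k+1)$, which is what pins down the constants $2$, $\tfrac{(n+1)(n-3)}{2}$, and the Neumann weight $-\tfrac{n-3}{2}$ in \eqref{1.11}--\eqref{1.12}; and, on the analytic side, making sure the mode-by-mode identities pass from finite spherical-harmonic sums to general $f\in C^\infty(\mathbb{S}^n)$ and its $H^2(\mathbb{B}^{n+1})$-extensions by density. A more conceptual argument — the one that generalizes to all orders — replaces this computation by the Graham--Zworski scattering picture on the hyperbolic ball: $\int_{\mathbb{B}^{n+1}}|\Delta v|^2$ together with the lower-order boundary terms is the renormalized Dirichlet-type energy of a suitably adapted extension of $f$, whose Dirichlet-to-Neumann operator on $\mathbb{S}^n$ is the fractional GJMS operator $P_3$, after which Beckner's inequality again concludes; but in this single fourth-order case on $\mathbb{B}^{n+1}$ the elementary route above is the most economical.
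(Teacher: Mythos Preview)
Your argument is correct. The overall strategy matches the paper's: reduce to the unique biharmonic extension by a variational/Green's formula argument, show that for this extension the right-hand side of \eqref{1.12} equals $2\int_{\mathbb{S}^n} f\,\mathcal{P}_3 f\,d\sigma$, and then invoke Beckner's inequality at $\gamma=\tfrac32$.

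Where you differ is in how you produce the optimal extension and compute its energy. The paper (in its proof of the general Theorem~\ref{th1.6}, of which the present statement is the case $m=1$) writes the extension as the Poisson-type integral $V_m$ of \eqref{4.1}, identifies its spherical-harmonic coefficients via hypergeometric functions (Lemmas~\ref{lm4.1}--\ref{lm4.2}), and then extracts the energy identity by integrating $0=\int_{\mathbb{B}^{n+1}} V_m\,\Delta^{m+1}V_m$ by parts, which packages the boundary terms through the operators $\mathcal{P}_{2m+1}/\mathcal{P}_{2m+1-2k}$. You instead build the biharmonic extension mode by mode as $a_k r^k+b_k r^{k+2}$, integrate $|\Delta v|^2$ directly, and collapse the three coefficients to $2s_k(s_k^2-1)$ by hand. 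Your route is shorter and entirely elementary for $m=1$, and it makes transparent why the specific constants $-\tfrac{n-3}{2}$, $2$, and $\tfrac{(n+1)(n-3)}{2}$ are forced; the paper's route, while heavier here, is what survives for general $m$, where the polyharmonic extension has $m{+}1$ radial pieces and the bookkeeping is best handled by the hypergeometric machinery and the $\mathcal{P}_{2\gamma}$ calculus. You already note this trade-off in your final paragraph, which is accurate.
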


\begin{theorem} [Ache and Chang] \label{th1.2}
Given $f\in C^{\infty}(\mathbb{S}^{3})$, suppose $v$ is a smooth extension of $f$ to the unit
ball $\mathbb{B}^{4}$ which also satisfies the Neumann boundary condition
  \begin{equation}\label{1.13}
  \begin{split}
\left.\frac{\partial v}{\partial n}\right|_{\mathbb{S}^{3}}=0.
\end{split}
\end{equation}
Then we have the inequality
  \begin{equation}\label{1.14}
  \begin{split}
\log \left(\frac{1}{2\pi^{2}}\int_{\mathbb{S}^{3}}e^{3(f-\overline{f})}d\sigma\right)\leq
\frac{3}{16\pi^{3}}
\int_{\mathbb{B}^{4}}|\Delta v|^{2}dx+\frac{3}{8\pi^{2}}\int_{\mathbb{S}^{3}}|\widetilde{\nabla}f|^{2}d\sigma.
\end{split}
\end{equation}
Moreover, equality holds if and only if $v$ is a biharmonic
extension of a function of the form $-\log|1-\langle z_{0},\xi\rangle|+c$, where $c$ is a constant, $\xi\in\mathbb{S}^{3}$,
$z_{0}$ is some point in $\mathbb{B}^{4}$ and $v$ satisfies the boundary condition (\ref{1.13}).
\end{theorem}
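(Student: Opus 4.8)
The plan is to derive Theorem~\ref{th1.2} directly from Beckner's critical log--Sobolev inequality \eqref{1.9} in the case $n=3$ (i.e. $\gamma=\tfrac32$), by showing that the right-hand side of \eqref{1.14}, evaluated along the \emph{optimal} extension of $f$, is a fixed multiple of the conformally covariant energy $\int_{\mathbb S^3} f\,\mathcal P_3 f\,d\sigma$, where $\mathcal P_3:=\mathcal P_{2\gamma}\big|_{\gamma=3/2}$ as in \eqref{1.7}. An alternative is to let $n\downarrow 3$ in Theorem~\ref{th1.1} after the substitution $f\rightsquigarrow 1+\tfrac{n-3}{2}g$; I indicate this at the end as a consistency check, but the delicate asymptotics make the direct route cleaner.

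First I would reduce to the biharmonic extension. Fix $f\in C^\infty(\mathbb S^3)$ and minimize $v\mapsto\int_{\mathbb B^4}|\Delta v|^2\,dx$ over all smooth $v$ with $v|_{\mathbb S^3}=f$ and $\partial_n v|_{\mathbb S^3}=0$. Admissible variations have vanishing Cauchy data on $\mathbb S^3$, so all boundary terms in the two-fold Green identity drop and the Euler--Lagrange equation is $\Delta^2 v=0$; by convexity of the functional on this affine class the unique biharmonic extension $v_f$ with Cauchy data $(f,0)$ is the minimizer, and $\int_{\mathbb B^4}|\Delta v|^2\,dx\ge\int_{\mathbb B^4}|\Delta v_f|^2\,dx$ for every admissible $v$, with equality only if $v=v_f$. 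Since the left side of \eqref{1.14} depends on $v$ only through $f=v|_{\mathbb S^3}$ and the right side is increasing in $\int_{\mathbb B^4}|\Delta v|^2\,dx$, it suffices to prove \eqref{1.14} for $v=v_f$. Next, decompose $f=\sum_{k\ge0}f_k$ with $-\widetilde\Delta f_k=k(k+2)f_k$. The two biharmonic radial profiles regular at the origin attached to the $k$-th mode are $r^k$ and $r^{k+2}$, so imposing $v_f|_{r=1}=f_k$ and $\partial_r v_f|_{r=1}=0$ gives $v_f=\tfrac12\big((k+2)r^{k}-k\,r^{k+2}\big)f_k$ on that mode, whence $\Delta v_f=-2k(k+2)\,r^{k}f_k$. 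Integrating in polar coordinates on $\mathbb B^4$ and using orthogonality of the modes,
\[
\int_{\mathbb B^4}|\Delta v_f|^2\,dx=\sum_{k\ge0}2k^2(k+2)\int_{\mathbb S^3}|f_k|^2\,d\sigma,\qquad
\int_{\mathbb S^3}|\widetilde\nabla f|^2\,d\sigma=\sum_{k\ge0}k(k+2)\int_{\mathbb S^3}|f_k|^2\,d\sigma.
\]
On the other hand $B=\sqrt{-\widetilde\Delta+1}$ acts as $k+1$ on $f_k$, so $\mathcal P_3=\Gamma(B+2)/\Gamma(B-1)=(B+1)B(B-1)$ acts as $k(k+1)(k+2)$; since $k(k+1)(k+2)=\tfrac12\cdot 2k^2(k+2)+k(k+2)$, this yields the energy identity
\[
\int_{\mathbb S^3} f\,\mathcal P_3 f\,d\sigma=\tfrac12\int_{\mathbb B^4}|\Delta v_f|^2\,dx+\int_{\mathbb S^3}|\widetilde\nabla f|^2\,d\sigma.
\]

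To conclude, I apply \eqref{1.9} with $n=3$ to the function $3f$ in place of $f$: since $\mathcal P_3$ is linear, $\overline{3f}=3\overline f$, $\omega_3=2\pi^2$ and $3!=6$, it reads
\[
\log\!\Big(\frac1{2\pi^2}\int_{\mathbb S^3}e^{3(f-\overline f)}\,d\sigma\Big)\ \le\ \frac{9}{2\cdot 6\cdot 2\pi^2}\int_{\mathbb S^3} f\,\mathcal P_3 f\,d\sigma,
\]
and inserting the identity of the previous step on the right turns this into \eqref{1.14} for $v=v_f$, hence for every admissible $v$ by the reduction step. Tracing through both inequalities, equality forces $v=v_f$ (from the extension step) and equality in \eqref{1.9}, i.e. $3f-3\overline f=-3\log|1-\langle\zeta,\xi\rangle|$ up to an additive constant for some $\zeta\in\mathbb B^4$; equivalently $f=-\log|1-\langle z_0,\xi\rangle|+c$ with $z_0\in\mathbb B^4$ and $c\in\mathbb R$, and $v$ the biharmonic extension of this $f$ obeying \eqref{1.13} --- exactly the claimed extremals. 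As a check, substituting $f\rightsquigarrow 1+\tfrac{n-3}{2}g$ with $\overline g=0$ in \eqref{1.12} makes the $O(n-3)$ terms of the two sides agree while the $O\big((n-3)^2\big)$ terms reproduce \eqref{1.14}, and $|1-\langle z_0,\xi\rangle|^{(3-n)/4}=1+\tfrac{3-n}{4}\log|1-\langle z_0,\xi\rangle|+O\big((n-3)^2\big)$ carries the extremals of Theorem~\ref{th1.1} to the ones above.

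The heart of the matter is the energy identity: along the optimal (biharmonic, Neumann-normalized) extension, the bulk Hessian energy $\int_{\mathbb B^4}|\Delta v|^2$ together with the boundary Dirichlet energy assembles \emph{exactly} into the intertwinor $\mathcal P_3$ of \eqref{1.9} --- equivalently, the normal derivative at $\mathbb S^3$ of the solution of the associated degenerate-elliptic extension problem on $\mathbb B^4$ realizes the order-three scattering operator of hyperbolic space $\mathbb H^4$. Verifying this mode by mode (as above) is short; establishing it invariantly through the generalized Poisson kernel is where the real work lies. After that the only care needed is bookkeeping: the rescaling $f\mapsto 3f$ and the constants $\omega_3$, $3!$ must be tracked to land on the precise coefficients in \eqref{1.14}.
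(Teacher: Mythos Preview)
Your proof is correct and follows essentially the same strategy as the paper's proof of Theorem~\ref{th1.7} (of which Theorem~\ref{th1.2} is the case $n=3$, $m=1$): reduce to the optimal extension, establish the energy identity
\[
\int_{\mathbb S^3} f\,\mathcal P_3 f\,d\sigma=\tfrac12\int_{\mathbb B^4}|\Delta v_f|^2\,dx+\int_{\mathbb S^3}|\widetilde\nabla f|^2\,d\sigma,
\]
and apply Beckner's inequality~\eqref{1.9}. The only genuine difference is in how you verify the energy identity. You fit the biharmonic radial profiles $r^k$, $r^{k+2}$ directly to the Cauchy data $(f_k,0)$ and integrate $|\Delta v_f|^2$ mode by mode. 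The paper instead starts from the generalized Poisson kernel $V_m$ (equation~\eqref{4.1}), whose spherical-harmonic expansion \eqref{4.2}--\eqref{4.3} in fact reduces to your $\tfrac12\big((k+2)r^k-kr^{k+2}\big)f_k$ when $m=1$, $n=3$; it then computes all the traces $\Delta^k V_m|_{r=1}$, $\partial_r\Delta^k V_m|_{r=1}$ (Lemma~\ref{lm4.2}) and runs Green's formula \eqref{4.15}--\eqref{4.18} to reach the same identity. Your route is shorter and more transparent in this specific case; the paper's Poisson-kernel packaging is what allows the argument to scale uniformly to all $m$ without having to solve an $(m{+}1)\times(m{+}1)$ linear system for the radial coefficients at each step.
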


The proof of  Theorem \ref{th1.1} and \ref{th1.2} relies on the use of scattering
theory on hyperbolic space $(\mathbb{B}^{n+1},g_{\mathbb{B}})$, where $g_{\mathbb{B}}=\frac{4}{(1-|x|^{2})^{2}}g_{0}$
and $g_{0}=|dx|^{2}$ is the
Euclidean metric, and the right choice of distance function and adapted metric.
We remark that the  adapted
metric, introduced by Case and Chang \cite{cac},  is a  `natural' metric in the study of Sobolev  inequalities (see \cite{ac,chw,chy}).
The explicit formulas of
 adapted
metric is computed by
Ache and Chang \cite{ac}  only  in the case $\gamma=\frac{1}{2}$ and $\gamma=\frac{3}{2}$. To the best of our knowledge, the  explicit formulas of
 adapted
metric is unknown for the rest of cases.

Very recently, Ng\^o, Nguyen and Pham \cite{n} show that (\ref{1.12}) is  equivalent to the following sharp Sobolev trace inequality on half space
via M\"obius transform:

\begin{theorem} \label{th1.3}
Let $U\in W^{2,2}(\overline{\mathbb{R}_{+}^{n+1}})$ be
satisfied the Neumann boundary condition
  \begin{equation}\label{1.15}
  \begin{split}
\partial_{y}U(x,0)=0,
\end{split}
\end{equation}
Where $W^{2,2}(\overline{\mathbb{R}_{+}^{n+1}})$ is the usually Sobolev space. Then we have the sharp trace inequality
  \begin{equation}\label{1.16}
  \begin{split}
2\frac{\Gamma(\frac{n+3}{2})}{\Gamma(\frac{n-3}{2})}\omega^{\frac{3}{n}}_{n}\left(\int_{\mathbb{R}^{n}}|U(x,0)|^{\frac{2n}{n-3}}dx\right)^{\frac{n-3}{n}}
\leq
\int_{\mathbb{R}_{+}^{n+1}}|\Delta U(x,y)|^{2}dxdy.
\end{split}
\end{equation}
Furthermore, equality in (\ref{1.10}) holds if and only
if $U$ is a biharmonic extension of a function of the form $c(1+|x-x_{0}|^{2})^{-(n-3)/2}$, where $c$ is a constant,
 $x\in\mathbb{R}^{n}$,
$x_{0}$ is some fixed point in $\mathbb{R}^{n}$ and $U$ fulfills the
boundary condition (\ref{1.15}).
\end{theorem}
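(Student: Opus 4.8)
The plan is to prove Theorem \ref{th1.3} by transferring Theorem \ref{th1.1} from the ball to the half space via a conformal (M\"obius) change of coordinates, exactly in the spirit of how (\ref{1.4}) and (\ref{1.5}) are related, but now at the fourth-order level. First I would fix the Cayley-type transform $\Phi:\mathbb{R}^{n+1}_{+}\to\mathbb{B}^{n+1}$ sending the boundary hyperplane $\{y=0\}$ to $\mathbb{S}^{n}\setminus\{\text{pt}\}$, and record its conformal factor, say $\Phi^{*}g_{0}=\rho^{2}g_{0}$ with $\rho(x,y)=\frac{2}{1+|x|^{2}+y^{2}+\cdots}$ (the precise power of $\rho$ is what makes the argument go). The key analytic input is the conformal covariance of the relevant fourth-order operators: the Paneitz operator $P_{4}$ on $(\mathbb{B}^{n+1},g_{\mathbb{B}})$, and on the boundary the fractional GJMS operator $P_{3}$ of order three, together with the associated third-order Robin-type boundary operator $B_{3}$. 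The quadratic form on the left of (\ref{1.12}), namely $\int_{\mathbb{B}^{n+1}}|\Delta v|^{2}+2\int_{\mathbb{S}^{n}}|\widetilde\nabla f|^{2}+\frac{(n+1)(n-3)}{2}\int_{\mathbb{S}^{n}}|f|^{2}$, should be recognized as (a multiple of) the Dirichlet-type energy $\int f P_{3}f\,d\sigma$ after minimizing over extensions subject to (\ref{1.11}); this identification is essentially Theorem \ref{th1.1} read backwards, and I would cite it rather than reprove it.

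Next I would carry out the substitution. Writing $U=\rho^{-(n-3)/2}\,(v\circ\Phi^{-1})$ for the bulk function and $u(x)=\rho(x,0)^{-(n-3)/2}f(\Phi^{-1}(x,0))$ for its trace, the conformal covariance of $\Delta$ in dimension $n+1$ under $g\mapsto\rho^{2}g$ is governed by the GJMS/Paneitz intertwining relations; concretely, for the bi-Laplacian one uses that $|\Delta v|^{2}\,dx$ transforms with the correct weight so that $\int_{\mathbb{B}^{n+1}}|\Delta v|^{2}\,dx$ becomes $\int_{\mathbb{R}^{n+1}_{+}}|\Delta U|^{2}\,dx\,dy$ up to lower-order boundary terms that are absorbed by the boundary integrals in (\ref{1.12}). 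Simultaneously the Neumann condition (\ref{1.11}) with its $-\frac{n-3}{2}f$ term is precisely the expression of the flat Neumann condition (\ref{1.15}) $\partial_{y}U(x,0)=0$ after the conformal change — the inhomogeneous term on the ball is exactly the connection/mean-curvature correction that disappears on the flat half space. On the right-hand side, the conformal weight $\omega_{n}^{3/n}\big(\int_{\mathbb{S}^{n}}|f|^{2n/(n-3)}d\sigma\big)^{(n-3)/n}$ turns into $\big(\int_{\mathbb{R}^{n}}|U(x,0)|^{2n/(n-3)}dx\big)^{(n-3)/n}$ because $\frac{2n}{n-3}\cdot\frac{n-3}{2}=n$ makes the Jacobian cancel exactly, which is the reason the exponent $\frac{2n}{n-3}$ is the conformally critical one. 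Thus (\ref{1.12}) is converted termwise into (\ref{1.16}).

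The main obstacle I anticipate is bookkeeping the lower-order terms: after the conformal change, $\int_{\mathbb{B}^{n+1}}|\Delta v|^{2}\,dx$ does not map cleanly to $\int_{\mathbb{R}^{n+1}_{+}}|\Delta U|^{2}\,dx\,dy$ on the nose — there are curvature and first-derivative corrections (from $\Delta$ not being conformally covariant by itself, only the full Paneitz operator is), and integration by parts produces boundary contributions on $\{y=0\}$. One must show that the accumulated bulk corrections plus these boundary terms combine exactly with the transformed versions of $2\int_{\mathbb{S}^{n}}|\widetilde\nabla f|^{2}d\sigma+\frac{(n+1)(n-3)}{2}\int_{\mathbb{S}^{n}}|f|^{2}d\sigma$ to leave nothing behind — i.e.\ the flat half space has vanishing mean curvature and ambient curvature, so the right-hand side of (\ref{1.16}) is genuinely just $\int|\Delta U|^{2}$. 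Handling the decay of $U$ at infinity (to justify all integrations by parts and the membership $U\in W^{2,2}$) is a secondary technical point; here one uses that $f\in C^{\infty}(\mathbb{S}^{n})$ transforms to a $U$ with the appropriate polynomial decay dictated by the weight $\rho^{-(n-3)/2}$. Finally, the extremal functions transfer automatically: $c|1-\langle z_{0},\xi\rangle|^{(3-n)/4}$ on $\mathbb{S}^{n}$ pulls back under $\Phi$ to a constant multiple of $(1+|x-x_{0}|^{2})^{-(n-3)/2}$ on $\mathbb{R}^{n}$ (the standard bubble), and biharmonicity of the extension is preserved because the extremizer of the sharp inequality is characterized by the Euler-Lagrange equation $\Delta^{2}U=0$ in $\mathbb{R}^{n+1}_{+}$ with (\ref{1.15}), which is the conformal image of the biharmonic/Neumann problem on $\mathbb{B}^{n+1}$. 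I would close by noting that the reverse implication (deriving (\ref{1.12}) from (\ref{1.16})) is identical, so the two inequalities are equivalent, as claimed in \cite{n}.
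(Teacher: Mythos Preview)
Your approach is valid and is precisely the route of Ng\^o--Nguyen--Pham \cite{n} that the paper itself cites when introducing Theorem~\ref{th1.3}: transport Theorem~\ref{th1.1} from $\mathbb{B}^{n+1}$ to $\mathbb{R}^{n+1}_{+}$ via the M\"obius transform, checking that the boundary Neumann condition (\ref{1.11}) becomes the flat condition (\ref{1.15}) and that the curvature/lower-order terms on the sphere side cancel after the change of variables. The obstacle you flag --- the bookkeeping of the lower-order corrections coming from the non-covariance of $\Delta$ alone --- is exactly the delicate point, and it is manageable for $m=1$.

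The paper, however, takes a genuinely different route: rather than transforming from the ball, it proves the more general Theorem~\ref{th1.8} (of which Theorem~\ref{th1.3} is the case $m=1$) by working directly on the half space. The key lemma is Lemma~\ref{lm5.3}: the explicit extension
\[
U_{m}(x,y)=\pi^{-n/2}\frac{\Gamma(\frac{n+1}{2}+m)}{\Gamma(m+\frac{1}{2})}\int_{\mathbb{R}^{n}}\frac{y^{2m+1}}{(|x-\xi|^{2}+y^{2})^{\frac{n+1}{2}+m}}f(\xi)\,d\xi
\]
is shown to be polyharmonic ($\Delta^{m+1}U_{m}=0$) with the boundary data $\partial_{y}\Delta^{k}U_{m}|_{y=0}=0$ for $k\le m-1$ and $\partial_{y}\Delta^{m}U_{m}|_{y=0}=(-1)^{m}\frac{\Gamma(m+1)\Gamma(1/2)}{\Gamma(m+1/2)}(-\Delta_{x})^{m+1/2}f$. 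A single Green's-formula computation then yields $\int_{\mathbb{R}^{n+1}_{+}}|\nabla^{m+1}U_{m}|^{2}=\frac{\Gamma(m+1)\Gamma(1/2)}{\Gamma(m+1/2)}\int_{\mathbb{R}^{n}}f(-\Delta_{x})^{m+1/2}f$, and one concludes by the flat fractional Sobolev inequality (Theorem~\ref{be2}) rather than the spherical one. What your approach buys is a direct link to the already-proved ball inequality with no new extension to analyze; what the paper's approach buys is uniformity in $m$ --- as the authors point out, the conformal-bookkeeping in the M\"obius route ``would become increasingly delicate when the order of the operator is large,'' whereas the explicit-kernel route handles all $m$ at once.
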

In the same paper, Ng\^o, Nguyen and Pham \cite{n}, among other results, propose a slightly different approach to
prove Sobolev trace inequality of order six, while
 Case and Luo \cite{cal} obtained, among other results, the same sharp Sobolev trace inequalities
by deeply work of on the boundary operators. However, it seems that the argument in \cite{n,cal} would become increasingly delicate when the order of the operator is large.
A clear question   is   ``What is the situation for sharp Sobolev trace inequality of higher order?" Another question is
    `` what is
 the explicit formula of
extremal function of such inequations?"  In this paper, we shall give the answer  of both  questions.

The main results of this paper are the following three theorems.
\begin{theorem} \label{th1.6}Let $n>3$ and $m\geq1$ with $2m+1<n$.
Given $f\in C^{\infty}(\mathbb{S}^{n})$ , suppose $v$ is a smooth extension of $f$ to the unit
ball $\mathbb{B}^{n+1}$ which also satisfies the Neumann boundary condition:
  \begin{equation}\label{1.17}
  \begin{split}
\Delta^{k} v|_{\mathbb{S}^{n}}&=(-1)^{k}\frac{\Gamma(m+1)\Gamma(m-k+\frac{1}{2})}{\Gamma(m+\frac{1}{2})\Gamma(m-k+1)}\frac{\mathcal{P}_{2m+1}}{\mathcal{P}_{2m+1-2k}}f; \;0\leq k\leq[\frac{m}{2}];\\
\frac{\partial}{\partial n}\Delta^{k} v|_{\mathbb{S}^{n}}=&(-1)^{k+1}\frac{n-1-2m+2k}{2}\cdot\\
&\frac{\Gamma(m+1)\Gamma(m-k+\frac{1}{2})}{\Gamma(m+\frac{1}{2})\Gamma(m-k+1)}\frac{\mathcal{P}_{2m+1}}{\mathcal{P}_{2m+1-2k}}f,\;
\;\;\;\;\;\;\;0\leq k\leq[\frac{m-1}{2}].
\end{split}
\end{equation}
Then we have the inequality
  \begin{equation}\label{1.18}
  \begin{split}
\frac{\Gamma(m+1)\Gamma(\frac{1}{2})}{\Gamma(m+\frac{1}{2})}&\frac{\Gamma(\frac{n+2m+1}{2})}{\Gamma(\frac{n-2m-1}{2})}
\omega^{\frac{2m+1}{n}}_{n}\left(\int_{\mathbb{S}^{n}}|f|^{\frac{2n}{n-2m-1}}
d\sigma\right)^{\frac{n-2m-1}{n}}
\\ &\leq
\int_{\mathbb{B}^{n+1}}|\nabla^{m+1} v|^{2}dx+\int_{\mathbb{S}^{n}}f\mathcal{T}_{m}fd\sigma,
\end{split}
\end{equation}
where
  \begin{equation*}
  \begin{split}
\nabla^{m+1}=\left\{
               \begin{array}{ll}
                 \Delta^{\frac{m+1}{2}}, & \hbox{$m=$odd;} \\
                 \nabla\Delta^{\frac{m}{2}}, & \hbox{$m=$even,}
               \end{array}
             \right.
\end{split}
\end{equation*}
and $\mathcal{T}_{m}$ is an operator of order $2m$ defined as follow: if $m$ is an odd integer, then
\begin{equation*}
\begin{split}
\mathcal{T}_{m}=&\frac{n-1}{2}\frac{\Gamma(m+1)\Gamma(\frac{1}{2})}{\Gamma(m+\frac{1}{2})}
\frac{\mathcal{P}_{2m+1}}{\mathcal{P}_{1}}+\sum^{\frac{m-1}{2}}_{k=1}(m-2k)
\frac{\Gamma(m+1)^{2}}{\Gamma(m+\frac{1}{2})^{2}}\cdot\\
&\frac{\Gamma(k+\frac{1}{2})\Gamma(m-k+\frac{1}{2})}{\Gamma(k+1)\Gamma(m-k+1)}\frac{\mathcal{P}_{2m+1}^{2}}{\mathcal{P}_{2m+1-2k}\mathcal{P}_{2k+1}};
\end{split}
\end{equation*}
if $m$ is a even integer, then
\begin{equation*}
\begin{split}
\mathcal{T}_{m}=&\frac{n-1}{2}\frac{\Gamma(m+1)\Gamma(\frac{1}{2})}{\Gamma(m+\frac{1}{2})}
\frac{\mathcal{P}_{2m+1}}{\mathcal{P}_{1}}+
\frac{n-1-m}{2}\left(\frac{\Gamma(m+1)\Gamma(\frac{m+1}{2})}{\Gamma(m+\frac{1}{2})\Gamma(\frac{m}{2}+1)}\right)^{2}
\frac{\mathcal{P}_{2m+1}^{2}}{\mathcal{P}_{m+1}^{2}}\\
&+\sum^{\frac{m}{2}-1}_{k=1}(m-2k)
\frac{\Gamma(m+1)^{2}}{\Gamma(m+\frac{1}{2})^{2}}
\frac{\Gamma(k+\frac{1}{2})\Gamma(m-k+\frac{1}{2})}{\Gamma(k+1)\Gamma(m-k+1)}\frac{\mathcal{P}_{2m+1}^{2}}{\mathcal{P}_{2m+1-2k}\mathcal{P}_{2k+1}}.
\end{split}
\end{equation*}
Moreover, equality holds if and only if
\begin{equation}\label{b1.19}
  \begin{split}
v(x)=c\int_{\mathbb{S}^{n}}\frac{(1-|x|^{2})^{2m+1}}{|x-\xi|^{n+1+2m}}|1-\langle x_{0},\xi\rangle|^{\frac{2m+1-n}{4}}d\sigma,
\end{split}
\end{equation}
where $c$ is a constant and
$x_{0}$ is some point in $\mathbb{B}^{n+1}$. When $f=1$, inequality (\ref{1.18}) is
attained by the function $v(x)=\sum\limits^{m}_{k=0}\frac{(\frac{n-1}{2}-m)_{k}(-m)_{k}}{(-2m)_{k}}\frac{(2\rho)^{k}}{k!}$,
where $(a)_{k}$ is the rising Pochhammer symbol defined in Section 2.
\end{theorem}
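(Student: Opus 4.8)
The plan is to prove Theorem~\ref{th1.6} by reducing the Sobolev trace inequality on $\mathbb{B}^{n+1}$ to Beckner's inequality \eqref{1.8} for the fractional GJMS (intertwining) operators $\mathcal{P}_{2\gamma}$ with $\gamma=m+\frac{1}{2}$, via the scattering theory on hyperbolic space $(\mathbb{B}^{n+1},g_{\mathbb{B}})$ and the right choice of an adapted metric, in the spirit of Ache--Chang \cite{ac}. First I would set up the scattering/Poisson picture: for $\gamma=m+\frac{1}{2}$ the generalized Poisson kernel $\mathcal{K}(x,\xi)=c_{n,m}(1-|x|^2)^{2m+1}|x-\xi|^{-(n+1+2m)}$ produces, for each $f\in C^\infty(\mathbb{S}^n)$, the solution $v(x)=\int_{\mathbb{S}^n}\mathcal{K}(x,\xi)f(\xi)\,d\sigma$ of the appropriate degenerate elliptic equation on hyperbolic space, whose boundary asymptotics encode $\mathcal{P}_{2m+1}f$ as the scattering (Dirichlet-to-Neumann) operator. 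The key identity I would establish is an \emph{energy formula}: for this distinguished extension,
\begin{equation*}
\int_{\mathbb{B}^{n+1}}|\nabla^{m+1}v|^2\,dx + \int_{\mathbb{S}^n}f\,\mathcal{T}_m f\,d\sigma \;=\; \frac{\Gamma(m+1)\Gamma(\frac12)}{\Gamma(m+\frac12)}\int_{\mathbb{S}^n}f\,\mathcal{P}_{2m+1}f\,d\sigma,
\end{equation*}
where the lower-order boundary operator $\mathcal{T}_m$ is exactly the one in the statement. Combined with \eqref{1.8} this gives \eqref{1.18} immediately.

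To obtain this energy identity I would argue in two stages. Stage one: rewrite $\int_{\mathbb{B}^{n+1}}|\nabla^{m+1}v|^2\,dx$ as an integral over hyperbolic space against the adapted metric $g_m$ (the conformal rescaling of $g_{\mathbb{B}}$ whose volume/curvature data match the operator $(-\Delta)^{m+1}$); computing this metric explicitly on $\mathbb{B}^{n+1}$ is one of the announced results, and I would carry out that computation first (solving the relevant ODE for the conformal factor, generalizing the $\gamma=\tfrac12,\tfrac32$ cases of \cite{ac}). Stage two: integrate by parts $m+1$ times, peeling off boundary terms. Each integration by parts on $\mathbb{B}^{n+1}$ produces a boundary integral over $\mathbb{S}^n$ involving $\Delta^k v|_{\mathbb{S}^n}$ and $\partial_n\Delta^k v|_{\mathbb{S}^n}$; the Neumann-type conditions \eqref{1.17} are precisely chosen so that, when $v$ is the Poisson extension above, these normal derivatives are the prescribed multiples of $\frac{\mathcal{P}_{2m+1}}{\mathcal{P}_{2m+1-2k}}f$ (this is read off from the explicit $x$-dependence of $\mathcal{K}$, expanded near $|x|=1$, together with the eigenvalue action of $\mathcal{P}_{2\gamma}$ on spherical harmonics). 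Collecting the boundary contributions and matching Gamma-function coefficients is where the two displayed formulas for $\mathcal{T}_m$ (odd vs.\ even $m$) come from; the split is caused by whether $\nabla^{m+1}$ ends in a bare gradient ($m$ even) or a pure power of $\Delta$ ($m$ odd), which changes the last integration by parts.

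For the equality case I would trace the equality discussion in Beckner's Theorem~\ref{be1}: equality in \eqref{1.8} with $\gamma=m+\frac12$ forces $f=c\,|1-\langle x_0,\xi\rangle|^{(2m+1-n)/4}$ wait --- more precisely $f$ of the form $c|1-\langle z,\xi\rangle|^{(2\gamma-n)/2}$ with $2\gamma-n=2m+1-n$, and then the extremal $v$ is the corresponding Poisson extension \eqref{b1.19}; conversely any such $v$ saturates both the energy identity (it satisfies \eqref{1.17}) and Beckner's inequality. The radial case $f\equiv1$ is handled separately: the extension is radial, the Poisson integral reduces to a hypergeometric integral, and evaluating it gives the stated ${}_2F_1$-type polynomial in $\rho$ (with $\rho$ the relevant radial variable), which one checks directly satisfies \eqref{1.17}.

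The main obstacle I anticipate is the bookkeeping in stage two: controlling the tower of boundary terms from $m+1$ successive integrations by parts and showing that, for the Poisson extension, they assemble \emph{exactly} into $\int f\mathcal{T}_m f$ with no leftover interior terms. This requires (i) the explicit near-boundary expansion of the generalized Poisson kernel to sufficiently high order, (ii) precise knowledge of how $\mathcal{P}_{2\gamma}$ factors as products $\mathcal{P}_{2m+1}/\mathcal{P}_{2m+1-2k}$ act on each spherical harmonic (which follows from the Gamma-function definition \eqref{1.7} and the identity $\mathcal{P}_{2\gamma}\mathcal{P}_{2\gamma'}^{-1}$ telescoping), and (iii) the explicit adapted metric so that the left-hand integrand is genuinely the flat $|\nabla^{m+1}v|^2$ and not merely a conformally weighted version of it. Once these three ingredients are in place, the matching of coefficients is a (lengthy but mechanical) Gamma-function computation, and the inequality and its equality cases follow from Beckner's theorem.
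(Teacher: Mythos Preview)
Your overall strategy---establish the energy identity
\[
\int_{\mathbb{B}^{n+1}}|\nabla^{m+1}V_m|^2\,dx + \int_{\mathbb{S}^n}f\,\mathcal{T}_m f\,d\sigma
= \frac{\Gamma(m+1)\Gamma(\tfrac12)}{\Gamma(m+\tfrac12)}\int_{\mathbb{S}^n}f\,\mathcal{P}_{2m+1}f\,d\sigma
\]
for the distinguished Poisson extension $V_m$, and then invoke Beckner's inequality \eqref{1.8}---is exactly the paper's route. The difference lies in how the energy identity is obtained.

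Your ``Stage one'' is a detour. The paper does \emph{not} pass through the adapted metric $g_m$ at all in the proof of Theorem~\ref{th1.6}; the adapted metric is computed separately in Section~3 and is of independent interest, but plays no role here. Instead, the paper works directly in Euclidean coordinates on $\mathbb{B}^{n+1}$ and relies on one crucial fact you do not single out: the Poisson extension $V_m$ is \emph{polyharmonic}, i.e.\ $\Delta^{m+1}V_m=0$ in $\mathbb{B}^{n+1}$ (Lemma~\ref{lm4.1}, proved by a direct spherical-harmonic computation). Once this is in hand, one simply writes $0=\int_{\mathbb{B}^{n+1}}V_m\,\Delta^{m+1}V_m\,dx$ and integrates by parts $m+1$ times via Green's formula; the boundary values of $\Delta^k V_m$ and $\partial_r\Delta^k V_m$ on $\mathbb{S}^n$ are computed explicitly (Lemma~\ref{lm4.2}) from the hypergeometric expansion of $V_m$, and these assemble into $\int f\,\mathcal{T}_m f\,d\sigma$ together with the top term $\int f\,\mathcal{P}_{2m+1}f\,d\sigma$. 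No conformal rewriting is needed, and the ``no leftover interior terms'' issue you worry about is resolved instantly by polyharmonicity.

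You also omit the final step needed to pass from $V_m$ to an arbitrary extension $v$ satisfying \eqref{1.17}: one must show $\int_{\mathbb{B}^{n+1}}|\nabla^{m+1}V_m|^2\,dx\le \int_{\mathbb{B}^{n+1}}|\nabla^{m+1}v|^2\,dx$. The paper does this by expanding $\int|\nabla^{m+1}(v-V_m)|^2\,dx\ge0$ and observing that the cross term $\int\nabla^{m+1}(v-V_m)\cdot\nabla^{m+1}V_m\,dx$ vanishes after integrating by parts, again because $\Delta^{m+1}V_m=0$ and $v-V_m$ has all the relevant boundary data equal to zero. This minimization step is also what pins down the equality case: equality forces $\nabla^{m+1}(v-V_m)=0$, hence $v=V_m$, and then Beckner's equality case identifies $f$.
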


\begin{theorem} \label{th1.7} Let $n\geq3$ be an odd integer.
Given $f\in C^{\infty}(\mathbb{S}^{n})$, suppose $v$ is a smooth extension of $f$ to the unit
ball $\mathbb{B}^{n+1}$ which also satisfies the Neumann boundary condition
  \begin{equation}\label{1.19}
  \begin{split}
\Delta^{k} v|_{\mathbb{S}^{n}}&=(-1)^{k}\frac{\Gamma(\frac{n+1}{2})\Gamma(\frac{n}{2}-k)}{\Gamma(\frac{n}{2})\Gamma(\frac{n+1}{2}-k)}
\frac{\mathcal{P}_{n}}{\mathcal{P}_{n-2k}}f; \;\;\;\;\;\;0\leq k\leq[\frac{n-1}{4}];\\
\frac{\partial}{\partial n}\Delta^{k} v|_{\mathbb{S}^{n}}&=(-1)^{k+1}k\frac{\Gamma(\frac{n+1}{2})\Gamma(\frac{n}{2}-k)}{\Gamma(\frac{n}{2})\Gamma(\frac{n+1}{2}-k)}
\frac{\mathcal{P}_{n}}{\mathcal{P}_{n-2k}}f,\;\;\;\;0\leq k\leq[\frac{n-3}{4}].
\end{split}
\end{equation}
Then we have the inequality
  \begin{equation}\label{1.20}
  \begin{split}
&\log \left(\frac{1}{\omega_{n}}\int_{\mathbb{S}^{n}}e^{n(f-\overline{f})}d\sigma\right)\\
\leq&
\frac{n}{2^{n+1}\pi^{\frac{n+1}{2}}\Gamma(\frac{n+1}{2})}
\left(\int_{\mathbb{B}^{n+1}}|\nabla^{\frac{n+1}{2}} v|^{2}dx+\int_{\mathbb{S}^{n}}f\mathcal{T}_{\frac{n-1}{2}}fd\sigma\right),
\end{split}
\end{equation}
where the operator $\mathcal{T}_{\frac{n-1}{2}}$ is defined in Theorem \ref{th1.6}.
Moreover, equality holds if and only if
  \begin{equation}\label{b1.20}
  \begin{split}
v(x)=\pi^{-\frac{n}{2}}
\frac{\Gamma(n)}{2^{n}\Gamma(\frac{n}{2})}\int_{\mathbb{S}^{n}}\frac{(1-|x|^{2})^{n}}{|x-\xi|^{2n}}(-\ln|1-\langle x_{0},\xi\rangle|+c)d\sigma
\end{split}
\end{equation}
where $c$ is a constant and
$x_{0}$ is some point in $\mathbb{B}^{n+1}$.
\end{theorem}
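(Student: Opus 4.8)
The plan is to treat Theorem~\ref{th1.7} as the critical endpoint $\gamma=\tfrac n2$ of the one-parameter family behind Theorem~\ref{th1.6}: here $m=\tfrac{n-1}{2}$, $2m+1=n$, the Sobolev trace constant $\Gamma(\tfrac{n-2m-1}{2})^{-1}$ in \eqref{1.18} degenerates to zero, and the power inequality \eqref{1.18} is replaced by its exponential (Lebedev--Milin/Moser--Trudinger) form. The proof then rests on two ingredients, glued together: an \emph{energy identity} on $\mathbb{B}^{n+1}$ for the critical-order extension, and Beckner's logarithmic inequality \eqref{1.9} on $\mathbb{S}^{n}$.

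\emph{Step 1: the energy identity.} Using scattering theory on the hyperbolic ball $(\mathbb{B}^{n+1},g_{\mathbb{B}})$ at the spectral parameter $s=\tfrac n2+\gamma$, together with the explicit adapted metric and the explicit generalized Poisson kernel on $\mathbb{B}^{n+1}$ (the two computational inputs advertised in the Introduction), I would show that for \emph{every} smooth extension $v$ of $f$ obeying \eqref{1.19},
\[
\int_{\mathbb{B}^{n+1}}|\nabla^{\frac{n+1}{2}}v|^{2}\,dx+\int_{\mathbb{S}^{n}}f\,\mathcal{T}_{\frac{n-1}{2}}f\,d\sigma\ \ge\ C_{n}\int_{\mathbb{S}^{n}}f\,\mathcal{P}_{n}f\,d\sigma,
\]
with equality precisely when $v$ is the generalized Poisson extension $v_{f}$ of \eqref{b1.20}; at $\gamma=\tfrac n2$ this $v_{f}$ is simply the $g_{\mathbb{B}}$-harmonic extension of $f$, its kernel being the hyperbolic Poisson kernel $\tfrac1{\omega_{n}}\bigl(\tfrac{1-|x|^{2}}{|x-\xi|^{2}}\bigr)^{n}$. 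The argument: $v\mapsto\int_{\mathbb{B}^{n+1}}|\nabla^{\frac{n+1}{2}}v|^{2}$ is a positive quadratic form and the conditions \eqref{1.19} cut out an affine subspace, so a Dirichlet-principle argument identifies $v_{f}$ as the unique minimizer; one then evaluates the minimal energy by repeated integration by parts, the boundary data \eqref{1.19} being designed exactly so that every cross term telescopes and collects into the lower-order operator $\mathcal{T}_{\frac{n-1}{2}}$, whose recursive structure in the quotients $\mathcal{P}_{2m+1}/\mathcal{P}_{2m+1-2k}$ reflects the GJMS factorization \eqref{1.7}. This is the $\gamma\uparrow\tfrac n2$ limit of the computation behind \eqref{1.18}; what is new at criticality is only that the right-hand side stays finite while the Sobolev term of \eqref{1.18} has collapsed.

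\emph{Step 2: Beckner, constant matching, and equality.} Apply \eqref{1.9} to $nf$ (so $\overline{nf}=n\overline{f}$) and substitute the identity of Step~1; \eqref{1.20} follows once one checks that the emerging numerical factor equals $\tfrac{n}{2^{n+1}\pi^{(n+1)/2}\Gamma(\frac{n+1}{2})}$, which---using $\omega_{n}=\tfrac{2\pi^{(n+1)/2}}{\Gamma(\frac{n+1}{2})}$ and the Legendre duplication formula for $\Gamma(n+1)$---reduces to an elementary $\Gamma$-identity (and, when $n=3$, matches Ache--Chang's Theorem~\ref{th1.2}). For the equality case: equality in \eqref{1.20} forces equality simultaneously in Step~1 and in \eqref{1.9}. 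By Theorem~\ref{be1}, equality in \eqref{1.9} applied to $nf$ means $nf=-n\ln|1-\langle x_{0},\xi\rangle|+\text{const}$, i.e.\ $f=-\ln|1-\langle x_{0},\xi\rangle|+c$ with $x_{0}\in\mathbb{B}^{n+1}$, $c\in\mathbb{R}$; equality in Step~1 then forces $v=v_{f}$, the $g_{\mathbb{B}}$-harmonic extension of this $f$, namely $v_{f}(x)=\tfrac1{\omega_{n}}\int_{\mathbb{S}^{n}}\bigl(\tfrac{1-|x|^{2}}{|x-\xi|^{2}}\bigr)^{n}f(\xi)\,d\sigma$. Since $\tfrac1{\omega_{n}}=\pi^{-n/2}\tfrac{\Gamma(n)}{2^{n}\Gamma(n/2)}$ (duplication again), this is exactly \eqref{b1.20}.

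\emph{Main obstacle.} The real work is Step~1: establishing the energy identity at the critical order $m=\tfrac{n-1}{2}$ with the precise operator $\mathcal{T}_{(n-1)/2}$ and the precise constant $C_{n}$---that is, verifying that integrating $\int_{\mathbb{B}^{n+1}}|\nabla^{(n+1)/2}v_{f}|^{2}$ by parts produces boundary terms that reassemble \emph{without remainder} into $\mathcal{T}_{(n-1)/2}$. This is precisely where the explicit adapted metric and the explicit generalized Poisson kernel on $\mathbb{B}^{n+1}$ are indispensable. Should the direct computation prove unwieldy, a more robust route is to derive the identity by continuity: keep $\gamma<\tfrac n2$, invoke the non-critical energy identity from the proof of Theorem~\ref{th1.6}, and let $\gamma\uparrow\tfrac n2$ while passing to the exponential limit in the $L^{2n/(n-2\gamma)}$ term exactly as Beckner passes from \eqref{1.8} to \eqref{1.9}.
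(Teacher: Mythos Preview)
Your proposal is correct and follows essentially the same route as the paper. The paper proves the energy identity (your Step~1) by specializing the computations \eqref{4.15}--\eqref{4.18} from Theorem~\ref{th1.6} verbatim to $m=\tfrac{n-1}{2}$---no separate limiting argument is needed, since those Green's-formula computations are written for general $m$---obtaining \eqref{4.20}; the Dirichlet-principle minimization over extensions $v$ is exactly the argument \eqref{b4.19}, and then Beckner's inequality \eqref{1.9} (applied to $nf$, as you say) combined with the $\Gamma$-identity for $\omega_n$ gives \eqref{1.20} and the equality case.
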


\begin{remark}
We remark that one can also replace the  Neumann boundary condition (\ref{1.17}) or (\ref{1.19}) by
$\partial_{n}v|_{\mathbb{S}^{n}}=\partial_{n}V_{m}|_{\mathbb{S}^{n}}, \partial^{2}_{n}v|_{\mathbb{S}^{n}}=\partial^{2}_{n}V_{m}|_{\mathbb{S}^{n}} ,\cdots$.
When $k$ is small, we can compute the value through  (\ref{4.2}) and (\ref{4.3}). For example, we have
\[
\partial_{n}V_{m}|_{\mathbb{S}^{n}}=\frac{2m+1-n}{2}f,\;\partial^{2}_{n}V_{m}|_{\mathbb{S}^{n}}=\frac{\widetilde{\Delta}f}{2m-1}+
\frac{(n-1-2m)[(m-1)n-m(2m-1)]}{2(2m-1)}f.
\]
However, the argument  would become increasingly delicate when $k$ is large.
\end{remark}

\begin{theorem} \label{th1.8} Let $n>2m+1\geq3$ and
 $U(x,y)\in W^{m+1,2}(\overline{\mathbb{R}_{+}^{n+1}})$ be
satisfied the Neumann boundary condition
  \begin{equation}\label{1.21}
  \begin{split}
\Delta^{k}U_{m}(x,y)|_{y=0}=&\frac{\Gamma(m+1)\Gamma(m+\frac{1}{2}-k)}{\Gamma(m-k-1)\Gamma(m+\frac{1}{2})}\Delta^{k}_{x}f,
\;\;\;0\leq k\leq [\frac{m}{2}];\\
\partial_{y}\Delta^{k}U_{m}(x,y)|_{y=0}=&0,\;\;\;\;\;\;\;\;\;\;\;\;\;\;\;\;\;\;\;\;\;\;\;\;\;\;\;\;\;
\;\;\;\;\;\;\;\;0\leq k\leq [\frac{m-1}{2}].
\end{split}
\end{equation}
Then we have the sharp trace inequality
  \begin{equation*}
  \begin{split}
\frac{\Gamma(m+1)\Gamma(\frac{1}{2})}{\Gamma(m+\frac{1}{2})}&\frac{\Gamma(\frac{n+2m+1}{2})}{\Gamma(\frac{n-2m-1}{2})}
\omega^{\frac{2m+1}{n}}_{n}\left(\int_{\mathbb{R}^{n}}|U(x,0)|^{\frac{2n}{n-2m-1}}dx\right)^{\frac{n-2m-1}{n}}
\leq
\int_{\mathbb{R}_{+}^{n+1}}|\nabla^{m+1} U|^{2}dxdy.
\end{split}
\end{equation*}
Furthermore, equality  holds if and only
  \begin{equation}\label{b1.23}
  \begin{split}
U(x,y)=c\int_{\mathbb{R}^{n}}\frac{y^{1+2m}}{(|x-\xi|^{2}+y^{2})^{\frac{n+1}{2}+m}}(\lambda^{2}+|\xi-\xi_{0}|^{2})^{-\frac{n-2m-1}{2}}d\xi,
\end{split}
\end{equation}
where $\lambda>0$,  $c$ is a constant and
$\xi_{0}$ is some fixed point in $\mathbb{R}^{n}$.
\end{theorem}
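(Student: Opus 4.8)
The plan is to deduce the half-space inequality from the ball inequality of Theorem \ref{th1.6} by a conformal change of variables, exactly in the spirit of the passage from (\ref{1.4}) to (\ref{1.5}) and from (\ref{1.12}) to (\ref{1.16}) carried out in \cite{n}. First I would fix the Cayley/M\"obius transform $F\colon \mathbb{R}^{n+1}_{+}\to \mathbb{B}^{n+1}$ that sends the half-space conformally onto the ball, with boundary map $\mathbb{R}^{n}\to\mathbb{S}^{n}$ the inverse stereographic projection, and record its conformal factor, say $|dF| = \phi(x,y)\,|dx\,dy|$ with $\phi$ an explicit rational function of $(x,y)$ (the analogue of $2/(1+|x|^{2})$ on the boundary). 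The key algebraic fact to exploit is that the polyharmonic Dirichlet energy $\int |\nabla^{m+1}u|^{2}$ transforms conformally covariantly in dimension $n+1$ when $2(m+1)=n+1$, i.e. it is genuinely conformally invariant only in that borderline dimension; for general $n>2m+1$ one does not have invariance of $\int|\nabla^{m+1}v|^2$ alone, which is precisely why the boundary operator $\mathcal{T}_m$ and the extra boundary terms appear in (\ref{1.18}). The correct statement to use is that the \emph{full} left-plus-boundary functional in (\ref{1.18}), which is the Dirichlet-type energy associated to the adapted metric, is the conformally invariant object; under $F$ it pulls back (up to the conformal factor on $\mathbb{S}^n$, which is absorbed into the power-type weight and matches the change of the $L^{2n/(n-2m-1)}$ norm on the boundary) to the pure energy $\int_{\mathbb{R}^{n+1}_{+}}|\nabla^{m+1}U|^{2}$ on the half-space, provided the Neumann-type data (\ref{1.21}) on $\{y=0\}$ is the image under $F$ of the data (\ref{1.17}) on $\mathbb{S}^{n}$.

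Concretely the steps are: (i) given $U\in W^{m+1,2}(\overline{\mathbb{R}^{n+1}_+})$ with the boundary conditions (\ref{1.21}), set $v = J_F^{\,a}\,(U\circ F^{-1})$ on $\mathbb{B}^{n+1}$ for the appropriate conformal weight exponent $a=(n+1-2(m+1))/2 = (n-2m-1)/2$ times the Jacobian-type factor, chosen so that $\int_{\mathbb{B}^{n+1}}|\nabla^{m+1}v|^{2}dx + \int_{\mathbb{S}^n} f \mathcal{T}_m f\,d\sigma = \int_{\mathbb{R}^{n+1}_+}|\nabla^{m+1}U|^{2}dx\,dy$; (ii) check that the boundary trace $f = v|_{\mathbb{S}^n}$ equals $J^{\ast}(U(\cdot,0)\circ (\text{stereo})^{-1})$ so that the weighted $L^{2n/(n-2m-1)}$-norms coincide: $\omega_n^{(2m+1)/n}(\int_{\mathbb{S}^n}|f|^{2n/(n-2m-1)}d\sigma)^{(n-2m-1)/n} = \omega_n^{(2m+1)/n}(\int_{\mathbb{R}^n}|U(x,0)|^{2n/(n-2m-1)}dx)^{(n-2m-1)/n}$ — this is the standard computation behind (\ref{1.1})$\Leftrightarrow$(\ref{1.2}); (iii) verify that the Neumann conditions (\ref{1.17}) on the ball are carried by $F$ exactly to (\ref{1.21}) on the half-space, i.e. that the constants $\frac{\Gamma(m+1)\Gamma(m-k+1/2)}{\Gamma(m+1/2)\Gamma(m-k+1)}\frac{\mathcal P_{2m+1}}{\mathcal P_{2m+1-2k}}$ and the Dirichlet-to-Neumann relations on $\mathbb{S}^n$ degenerate, under the flattening $F$, to the flat conditions $\Delta^k U_m|_{y=0}=\frac{\Gamma(m+1)\Gamma(m+1/2-k)}{\Gamma(m-k-1)\Gamma(m+1/2)}\Delta^k_x f$ and $\partial_y\Delta^k U_m|_{y=0}=0$ (the first-order normal data on $\mathbb{S}^n$ becomes pure Neumann because the conformal factor's normal derivative contributes the missing pieces — this is the same mechanism by which (\ref{1.11}) becomes (\ref{1.15})); (iv) apply Theorem \ref{th1.6} to $v$ and $f$ and read off the inequality. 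The sharpness and the classification of extremals then transfer directly: the extremals (\ref{b1.19}) on the ball, $v(x)=c\int_{\mathbb{S}^n}\frac{(1-|x|^2)^{2m+1}}{|x-\xi|^{n+1+2m}}|1-\langle x_0,\xi\rangle|^{(2m+1-n)/4}d\sigma$, are pushed forward by $F$ to exactly the claimed family (\ref{b1.23}), $U(x,y)=c\int_{\mathbb{R}^n}\frac{y^{1+2m}}{(|x-\xi|^2+y^2)^{(n+1)/2+m}}(\lambda^2+|\xi-\xi_0|^2)^{-(n-2m-1)/2}d\xi$, since the generalized Poisson kernel $\frac{(1-|x|^2)^{2m+1}}{|x-\xi|^{n+1+2m}}$ on the ball transforms into the half-space Poisson-type kernel $\frac{y^{1+2m}}{(|x-\xi|^2+y^2)^{(n+1)/2+m}}$ under the Cayley map, and the boundary extremal $|1-\langle x_0,\xi\rangle|^{(2m+1-n)/4}$ on $\mathbb{S}^n$ maps to $(\lambda^2+|\xi-\xi_0|^2)^{-(n-2m-1)/2}$ on $\mathbb{R}^n$ (up to the weight), matching the extremals in Beckner's/Lieb's Theorem \ref{be2}.

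The main obstacle I anticipate is step (iii): verifying that the somewhat opaque Neumann boundary data (\ref{1.17}) — involving ratios of the intertwining operators $\mathcal P_{2\gamma}$ and Gamma-factor coefficients — is the conformal image of the clean flat data (\ref{1.21}). One cannot simply conjugate each $\Delta^k$ by $F$, because $\Delta^k$ is not conformally covariant; instead one must recognize that the true invariant boundary operators are the fractional GJMS / Dirichlet-to-Neumann operators built from $\mathcal P_{2\gamma}$, whose behavior under $F$ is governed by the scattering theory on hyperbolic space $(\mathbb{B}^{n+1},g_{\mathbb B})$ already invoked in \cite{ac}. So the honest way to handle (iii) is to express both sides of (\ref{1.17}) and (\ref{1.21}) intrinsically in terms of the adapted metric and its associated extension problem on $(\mathbb{B}^{n+1},g_{\mathbb B})$ respectively on the Poincar\'e half-space model, observe these two models are isometric via $F$, and then translate the boundary conditions through that isometry; the Euclidean-looking forms (\ref{1.17}) and (\ref{1.21}) are then just the two coordinate expressions of one and the same geometric condition. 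A secondary technical point is the density/regularity argument: the statement is for $U\in W^{m+1,2}(\overline{\mathbb{R}^{n+1}_+})$, so one first proves everything for smooth compactly-supported-at-infinity $U$ (where all the trace computations are literal), then passes to the limit using that the trace operators $U\mapsto \Delta^k U|_{y=0}$ and $U\mapsto \partial_y\Delta^k U|_{y=0}$ are bounded on $W^{m+1,2}$ onto the appropriate fractional Sobolev spaces on $\mathbb{R}^n$, and that the right-hand side $\int|\nabla^{m+1}U|^2$ together with the sharp constant is continuous under this convergence.
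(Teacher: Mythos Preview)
Your conformal-transfer strategy is a genuinely different route from the paper's. The paper does \emph{not} deduce Theorem~\ref{th1.8} from Theorem~\ref{th1.6} via the Cayley map; instead it reruns on $\mathbb{R}^{n+1}_{+}$ the same argument it used on the ball. Concretely, the paper constructs the explicit extension $U_{m}$ from the half-space Poisson kernel (Lemmas~\ref{lm5.2}--\ref{lm5.3}), checks directly that $\Delta^{m+1}U_{m}=0$ and computes all the boundary data $\Delta^{k}U_{m}|_{y=0}$, $\partial_{y}\Delta^{k}U_{m}|_{y=0}$; then one application of Green's formula collapses $\int_{\mathbb{R}^{n+1}_{+}}|\nabla^{m+1}U_{m}|^{2}$ to $\tfrac{\Gamma(m+1)\Gamma(1/2)}{\Gamma(m+1/2)}\int_{\mathbb{R}^{n}}f(-\Delta_{x})^{m+1/2}f$, after which Lieb's sharp inequality on $\mathbb{R}^{n}$ (Theorem~\ref{be2}) finishes the job. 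The minimality of $U_{m}$ among extensions with the same boundary data is the same orthogonality computation as in the ball case. No boundary operator $\mathcal{T}_{m}$ ever appears on the half-space, because the integration by parts produces no residual boundary terms beyond the top one.

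Your approach would work in principle (it is exactly what \cite{n} do for $m=1$), and it has the conceptual appeal of deriving the half-space statement as a corollary of the ball statement. But the price is precisely the step you flag as the obstacle: you must show that the full ball functional $\int_{\mathbb{B}^{n+1}}|\nabla^{m+1}v|^{2}+\int_{\mathbb{S}^{n}}f\mathcal{T}_{m}f$ pulls back to the bare $\int_{\mathbb{R}^{n+1}_{+}}|\nabla^{m+1}U|^{2}$, and that the Neumann data (\ref{1.17}) is carried to (\ref{1.21}). Neither is automatic, since $\Delta^{k}$ and $\mathcal{T}_{m}$ are not individually conformally covariant; for general $m$ this amounts to a nontrivial identity among the $\mathcal{P}_{2\gamma}$ (that the curvature contributions in $\mathcal{T}_{m}$ vanish after flattening), which you would have to prove. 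The paper's direct route sidesteps this entirely by never leaving the half-space, at the modest cost of repeating the Green's-formula computation in the new coordinates.
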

\begin{remark}
We remark that, because of Lemma \ref{lm5.4}, one can also replace the Neumann boundary condition (\ref{1.21}) by the
following:
  \begin{equation}\label{1.22}
  \begin{split}
\partial^{2k}_{y}U(x,y)|_{y=0}=&\frac{\Gamma(k+\frac{1}{2})\Gamma(m-k+\frac{1}{2})}{\Gamma(\frac{1}{2})\Gamma(m+\frac{1}{2})}\Delta_{x}^{k}f,
\;0\leq k\leq [\frac{m}{4}];\\
\partial^{2k+1}_{y}U(x,y)|_{y=0}=&0,\;\;\;\;\;\;\;\;\;\;\;\;\;\;\;\;\;\;\;\;\;\;\;\;\;\;
\;\;\;\;\;\;\;\;0\leq k\leq [\frac{m-1}{4}].
\end{split}
\end{equation}
Notice that the boundary condition (\ref{1.22}) is different to that given by R. Yang (\cite{y}).
\end{remark}

This article is organized as follows:
In Section 2, we briefly quote  some of properties of special functions, such as
hypergeometric function and Gegenbauer polynomials, and
Funk-Hecke formula for spherical harmonics which will be used in the paper.
In Section 3 we first review the connection between scattering theory  and conformally invariant objects on their boundaries. Next we
  compute the explicit formulas of the solution  of Poisson equation  and the adapted metrics on the model case $(\mathbb{B}^{n+1},\mathbb{S}^{n},g_{\mathbb{B}})$.
In Section 4, we prove Theorem \ref{th1.6} and \ref{th1.7}. The proof of Theorem \ref{th1.8}
 is given in Section 5.

\section{Preliminaries}
In this section, we  quote  some preliminary facts which will be needed in
the sequel.
\subsection{Hypergeometric function}
We use the notation $F(a,b;c;z)$ to denote
  \begin{equation}\label{2.1}
  \begin{split}
F(a,b;c;z)=\sum^{\infty}_{k=0}\frac{(a)_{k}(b)_{k}}{(c)_{k}}\frac{z^{k}}{k!},
\end{split}
\end{equation}
where $c\neq0,-1,\cdots,-n,\cdots$ and $(a)_{k}$ is the rising Pochhammer symbol defined by
$$
(a)_{0}=0,\;(a)_{k}=a(a+1)\cdots(a+k-1), \;k\geq1.
$$
If either $a$ or $b$ is a nonpositive integer, then the series terminates and  the function reduces to a polynomial.

 Here, we
only list some of properties of hypergeometric function which will be used in the rest of paper. For
more information about of these functions, we refer to \cite{g}, section 9.1 and \cite{er}, Chapter II.
\begin{itemize}
  \item The hypergeometric function $F(a,b;c;z)$ satisfies  the hypergeometric differential equation
  \begin{equation}\label{2.2}
  \begin{split}
z(1-z)F''+(c-(a+b+1)z)F'-abF=0.
\end{split}
\end{equation}

  \item If $\textrm{Re} (c-a-b)>0$, then $F(a,b;c;1)$ exists and
      \begin{equation}\label{2.3}
  \begin{split}
F(a,b;c;1)=\frac{\Gamma(c)\Gamma(c-a-b)}{\Gamma(c-a)\Gamma(c-b)}.
\end{split}
\end{equation}

  \item Transformation formulas (1):
    \begin{equation}\label{2.4}
  \begin{split}
F(a,b;c;z)=(1-z)^{c-a-b} F(c-a,c-b;c;z).
\end{split}
\end{equation}

 \item Transformation formulas (2): if $c-a-b$ is not an integer, then
    \begin{equation}\label{2.5}
  \begin{split}
F(a,b;c;z)=&\frac{\Gamma(c)\Gamma(c-a-b)}{\Gamma(c-a)\Gamma(c-b)} F(a,b;a+b-c+1;1-z)+\\
&(1-z)^{c-a-b}\frac{\Gamma(c)\Gamma(a+b-c)}{\Gamma(a)\Gamma(b)}F(c-a,c-b;c-a-b+1;1-z).
\end{split}
\end{equation}

 \item Differentiation formula:
    \begin{equation}\label{2.6}
  \begin{split}
\frac{d^{k}}{dz^{k}}F(a,b;c;z)=\frac{(a)_{k}(b)_{k}}{(c)_{k}}F(a+k,b+k;c+k;z),\;\;k\geq1.
\end{split}
\end{equation}

\end{itemize}

\subsection{Gegenbauer polynomials}
We use the notation $C^{\alpha}_{k}(x)$ to denote the Gegenbauer polynomial of degree $k$ which
 can be defined in terms of the generating function:
    \begin{equation}\label{2.7}
  \begin{split}
\frac{1}{(1-2xt+t^{2})^{\alpha}}=\sum^{\infty}_{k=0}C^{\alpha}_{k}(x)t^{k}.
\end{split}
\end{equation}

Here, we also
list some of properties of Gegenbauer polynomial  and  refer to \cite{sw} and \cite{g}, section 8.93
for more information about this polynomial.
\begin{itemize}
  \item Rodrigues formula:
    \begin{equation}\label{2.8}
  \begin{split}
C^{\alpha}_{k}(x)=\frac{(-1)^{k}}{2^{k}k!}\frac{\Gamma(\alpha+\frac{1}{2})\Gamma(k+2\alpha)}{\Gamma(2\alpha)\Gamma(\alpha+k+\frac{1}{2})}
(1-x^{2})^{-\alpha+\frac{1}{2}}\frac{d^{k}}{dx^{k}}(1-x^{2})^{k+\alpha-\frac{1}{2}}.
\end{split}
\end{equation}

  \item Orthogonality and normalization: if $k\neq m$, then
      \begin{equation}\label{2.9}
  \begin{split}
\int^{1}_{-1}C^{\alpha}_{k}(x)C^{\alpha}_{m}(x)(1-x^{2})^{\alpha-\frac{1}{2}}dx=0;
\end{split}
\end{equation}
if $k=m$, then
      \begin{equation}\label{2.10}
  \begin{split}
\int^{1}_{-1}[C^{\alpha}_{k}(x)]^{2}(1-x^{2})^{\alpha-\frac{1}{2}}dx=\frac{\pi2^{1-2\alpha}\Gamma(k+2\alpha)}{k!(k+\alpha)[\Gamma(\alpha)]^{2}}.
\end{split}
\end{equation}

  \item Differentiation formulas:
        \begin{equation}\label{2.11}
  \begin{split}
\frac{d^{m}}{dx^{m}}C^{\alpha}_{k}(x)=\left\{
                                        \begin{array}{ll}
                                          2^{m}\frac{\Gamma(\alpha+m)}{\Gamma(\alpha)}C^{\alpha+m}_{k-m}(x), & \hbox{$k-m\geq0$;} \\
                                          0, & \hbox{$k-m<0$.}
                                        \end{array}
                                      \right.
\end{split}
\end{equation}

\end{itemize}

Finally, we recall an integral (see \cite{g}, page 407, 3.665)
$$
\int^{\pi}_{0}\frac{\sin^{2\mu-1}\theta}{(1-2t\cos\theta+t^{2})^{\alpha}}d\theta=\frac{\Gamma(\mu)\Gamma(\frac{1}{2})}{\Gamma(\mu+\frac{1}{2})}
F(\alpha,\alpha-\mu+\frac{1}{2};\mu+\frac{1}{2};t^{2}),\;\; \textrm{Re} \mu>0, |t|<1.
$$
Using the expansion (\ref{2.1}) and (\ref{2.7}), we have, for $k\geq0$,
      \begin{equation}\label{2.12}
  \begin{split}
\int^{\pi}_{0}C^{\alpha}_{2k}(\cos\theta)\sin^{2\mu-1}\theta dx=&\frac{\Gamma(\mu)\Gamma(\frac{1}{2})}{\Gamma(\mu+\frac{1}{2})}
\frac{(\alpha)_{k}(\alpha-\mu+\frac{1}{2})_{k}}{(\mu+\frac{1}{2})_{k}}\frac{1}{k!};\\
\int^{\pi}_{0}C^{\alpha}_{2k+1}(\cos\theta)\sin^{2\mu-1}\theta dx=&0.
\end{split}
\end{equation}

\subsection{Funk-Hecke formula} It is known that $L^{2}(\mathbb{S}^{n})$ can be decomposed as follow
\[
L^{2}(\mathbb{S}^{n})=\bigoplus^{\infty}_{l=0}\mathcal{H}_{l},
\]
where $\mathcal{H}_{l}$ is the space of spherical harmonics of degree $l$ (see \cite{sw}).
 For $n\geq2$, the
Funk-Hecke formula reads as follow (see e.g. \cite{be,fr})
\begin{equation}\label{2.13}
  \begin{split}
\int_{\mathbb{S}^{n}}K(\langle\xi,\eta\rangle)Y_{l}(\eta)d\sigma(\eta)=&\lambda_{l}Y_{l},\\
\lambda_{l}=&(4\pi)^{\frac{n-1}{2}}\frac{l!\Gamma(\frac{n-1}{2})}{\Gamma(l+n-1)}\int^{1}_{-1}K(t)C^{\frac{n-1}{2}}_{l}(t)(1-t^{2})^{\frac{n-2}{2}}dt,
\end{split}
\end{equation}
where $K\in L^{1}((-1,1), (1-t^{2})^{\frac{n-2}{2}}dt)$ and $Y_{l}\in\mathcal{H}_{l}$.
Moreover, if $Y_{l}\in\mathcal{H}_{l}$, then
\begin{equation}\label{2.14}
  \begin{split}
-\widetilde{\Delta}Y_{l}=l(n-1+l)Y_{l}
\end{split}
\end{equation}
and thus
\begin{equation}\label{2.15}
  \begin{split}
BY_{l}=\left(l+\frac{n-1}{2}\right)Y_{l},\;\;\mathcal{P}_{2\gamma}Y_{l}=\frac{\Gamma(l+\frac{n}{2}+\gamma)}{\Gamma(l+\frac{n}{2}-\gamma)}Y_{l},
\end{split}
\end{equation}
where $B$ and $\mathcal{P}_{2\gamma}$ defined in (\ref{1.7}).

\section{adapted metrics}
Firstly, we briefly review  the definition of the fractional GJMS operator via scattering theory (see \cite{grz}).
A triple $(X^{n+1}, M^{n}, g_{+})$ is a Poincar\'e-Einstein manifold if

(1) $X^{n+1}$ is (diffeomorphic to) the interior of a compact manifold $\overline{X}^{n+1}$ with boundary $\partial\overline{X}=M^{n}$,

(2) $X^{n+1}$ is complete with $\textrm{Ric}(g_{+})=-ng_{+}$, and

(3) there exists a nonnegative $\rho\in C^{\infty}(X)$ such that $\rho^{-1}(0)=M^{n}$,
$d\rho\neq0$ along $M$, and the metric $g:=\rho^{2}g_{+}$ extends to a smooth metric on
$\overline{X}^{n+1}$.

A  function $\rho$ satisfying (3) above is called a \emph{defining function}. It is obvious that
the conformal class $[h]:=[g|_{TM}]$ on $M$ is well-defined for a Poincar\'e-Einstein
manifold because $\rho$ is only
determined up to multiplication by a positive smooth function on $X$.

Given a Poincar\'e-Einstein manifold $(X^{n+1}, M^{n}, g_{+})$ and a representative $[h]$
on the conformal boundary, there is a uniquely defining function $\rho$ such that
$g_{+}=\rho^{-2}(d\rho^{2}+h_{\rho})$ on $M\times(0,\delta)$, where $h_{\rho}$
is a one-parameter
family of metrics on $M$ satisfying $h_{0}=h$.
Given $f\in C^{\infty}(M)$. It has been shown (see \cite{ma,grz}) that the Poisson equation
\begin{equation}\label{3.1}
\begin{split}
-\Delta_{g_{+}}u-s(n-s)u=0
\end{split}
\end{equation}
has a unique solution of the form
\begin{equation}\label{3.2}
\begin{split}
u=F\rho^{n-s} +H\rho^{s},\;\; F,H\in C^{\infty}(X),\; F|_{\rho=0}=f,
\end{split}
\end{equation}
where $s\in\mathbb{C}$ and $s(n-s)$ do not belongs to the pure point spectrum of $-\Delta_{g_{+}}$.
The scattering
operator on $M$ is defined as $S(s)f=H|_{M}$. If $\textrm{Re}(s)>\frac{n}{2}$, then the scattering
operator is a meromorphic family of pseudo-differential
operators. Graham and Zworski \cite{grz} defined the fractional GJMS operator $P_{2\gamma} (\gamma\in(0,\frac{n}{2})\setminus\mathbb{N})$ as follow
\begin{equation}\label{3.3}
\begin{split}
P_{2\gamma}f:=d_{\gamma}S\left(\frac{n}{2}+\gamma\right)f,\;\;d_{\gamma}=2^{2\gamma}\frac{\Gamma(\gamma)}{\Gamma(-\gamma)}.
\end{split}
\end{equation}
Here we denote by $\mathbb{N}$  the set of all natural numbers and $\mathbb{N}_{0}=\mathbb{N}\setminus\{0\}$.
In term of $P_{2\gamma}$, the the fractional $Q$-curvature $Q_{2\gamma}$ is defined by
\begin{equation*}
\begin{split}
Q_{2\gamma}:=\frac{2}{n-2\gamma}P_{2\gamma}(1).
\end{split}
\end{equation*}
 If $\gamma\in\mathbb{N}_{0}$, then $P_{2\gamma}$ is nothing but the GJMS operator on $M$ (see \cite{gr}).
It has been also shown by Graham and Zworski \cite{grz}  that the
principal symbol of $P_{2\gamma}$ is is exactly the principal
symbol of the fractional Laplacian $(-\Delta)^{\gamma}$ and
satisfy an important conformal covariance property:
for a conformal change of metric $\widehat{h}=e^{2\tau}h$,
we have
\begin{equation}\label{3.4}
\begin{split}
\widehat{P}_{2\gamma}f=e^{-\frac{n+2\gamma}{2}\tau}P_{2\gamma}\left(e^{\frac{n-2\gamma}{2}}f\right),\;\;\forall f\in C^{\infty}(M).
\end{split}
\end{equation}

Next we recall the adapted metric,  introduced by Case and Chang \cite{cac}, on a conformally compact Poincar\'e-Einstein manifold
$(X^{n+1}, \partial X, g_{+})$. This metric is introduced for any parameter
$s=\frac{n}{2}+\gamma$ with $\gamma\in(0,\frac{n}{2})$  and $s=n$ if $n$ is odd. For such an $s$, we denote by $\vartheta_{s}$
the solution of Poisson equation (\ref{3.1}) with Dirichlet condition $f\equiv1$.
Notice that if   the Yamabe constant of the
boundary metric $h$ is positive, then by a result of Lee (see \cite{le}, Theorem A), we have $\vartheta_{s}>0$ so that one can
take $\rho_{s}:=(\vartheta_{s})^{\frac{1}{n-s}}$ as a defining function. The metric $g_{s}=\rho^{2}_{s}g_{+}$ is called
adapted metric. In the limiting case, namely $s=n$ and $n$ is an odd integer, the  adapted metric, appeared in \cite{fe1},
is defined as $g^{\ast}=e^{2\tau}$, where
\begin{equation}\label{3.5}
\begin{split}
\tau=-\frac{d}{ds}\vartheta_{s}|_{s=n}.
\end{split}
\end{equation}
 We remark that $\tau$ satisfies $-\Delta_{g_{+}}\tau=n$.
For more information about  GJMS operator and adapted metric, we refer to \cite{c,ca1,ca2,cac,cal,chg,chw,chy,fe1,fe3,grl,grz,ju,le,y}

In the rest of this section, we shall consider the model case $(\mathbb{B}^{n+1},\mathbb{S}^{n},g_{\mathbb{B}})$, where $g_{\mathbb{B}}=\frac{4}{(1-|x|^{2})^{2}}g_{0}$
and $g_{0}=|dx|^{2}$ is the
Euclidean metric. The defining function is $\rho=\frac{1-|x|^{2}}{2}$.
Firstly, we give the explicit formula of the solution of Poisson equation (\ref{3.1}).
The main result is the following theorem:
\begin{theorem}\label{th3.1}
Let $\gamma\in (0,\frac{n}{2})$, $s=\frac{n}{2}+\gamma$ and $\rho=\frac{1-|x|^{2}}{2}$. The solution of the following
 Poisson equation on the hyperbolic space $(\mathbb{B}^{n+1},g_{\mathbb{B}})$
\begin{equation}\label{a3.1}
\begin{split}
\left\{
  \begin{array}{ll}
    -\Delta_{g_{\mathbb{B}}}u-s(n-s)u=0 & \hbox{in $\mathbb{B}^{n+1}$,} \\
    u=F\rho^{n-s}+H\rho^{s}, & \hbox{} \\
 F|_{\partial \mathbb{B}^{n+1}}=f(\xi), & \hbox{.}
  \end{array}
\right.
\end{split}
\end{equation}
is
\begin{equation}\label{3.7}
\begin{split}
u(x)=\pi^{-\frac{n}{2}}
\frac{\Gamma(\frac{n}{2}+\gamma)}{\Gamma(\gamma)}
\int_{\mathbb{S}^{n}}\left(\frac{1-|x|^{2}}{2|x-\xi|^{2}}\right)^{s}f(\xi)d\sigma.
\end{split}
\end{equation}
Furthermore, if $f$ has an expansion in spherical harmonics, $f=\sum\limits^{\infty}_{l=0} Y_{l}$, where $Y_{l}$ is a
spherical harmonic of degree $l$, then (here we set $r=|x|$)
\begin{equation}\label{3.8}
\begin{split}
u(x)=\rho^{n-s}
\sum^{\infty}_{l=0}\varphi_{l}(r^{2})r^{l}Y_{l},
\end{split}
\end{equation}
where
\begin{equation}\label{3.9}
\begin{split}
\varphi_{l}(r)
=&\frac{\Gamma(\gamma+\frac{1}{2})}{\Gamma(2\gamma)}
\frac{\Gamma(l+\frac{n}{2}+\gamma)}{\Gamma(l+\frac{n+1}{2})}
F(l+\frac{n}{2}-\gamma,\frac{1}{2}-\gamma,l+\frac{n+1}{2};r)
\end{split}
\end{equation}
satisfying $\varphi_{l}(1)=1.$
\end{theorem}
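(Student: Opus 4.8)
The plan is to establish \eqref{3.7} by the explicit generalized Poisson kernel and then derive the spherical-harmonic decomposition \eqref{3.8}--\eqref{3.9} by the Funk--Hecke formula. First I would verify that the claimed kernel
\[
u(x)=\pi^{-\frac{n}{2}}\frac{\Gamma(\frac{n}{2}+\gamma)}{\Gamma(\gamma)}\int_{\mathbb{S}^{n}}\left(\frac{1-|x|^{2}}{2|x-\xi|^{2}}\right)^{s}f(\xi)\,d\sigma
\]
satisfies the equation $-\Delta_{g_{\mathbb{B}}}u-s(n-s)u=0$. The direct route is to show that for each fixed $\xi\in\mathbb{S}^{n}$ the function $P_{s}(x,\xi)=\left(\frac{1-|x|^{2}}{|x-\xi|^{2}}\right)^{s}$ is an eigenfunction: a computation with the hyperbolic Laplacian $\Delta_{g_{\mathbb{B}}}=\frac{(1-|x|^{2})^{2}}{4}\Delta_{0}+\frac{(n-1)(1-|x|^{2})}{2}\,x\cdot\nabla_{0}$ reduces, after expanding $\Delta_{0}$ in the Euclidean variable, to a one-variable identity in $t=|x-\xi|$ and $\rho$; equivalently, one conjugates by the Möbius transformation sending $x$ to $0$, under which $P_{s}(x,\xi)$ transforms (up to a conformal factor) to a constant multiple of $\rho^{s}$, and $\rho^{n-s},\rho^{s}$ are manifestly the two indicial solutions at the boundary. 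I would then check the normalization of the constant $\pi^{-n/2}\Gamma(\frac n2+\gamma)/\Gamma(\gamma)$ by testing against $f\equiv1$, using the integral $\int_{\mathbb{S}^{n}}|x-\xi|^{-2s}d\sigma$ evaluated via \eqref{2.12}, so that $F|_{\partial\mathbb{B}^{n+1}}=f$ holds; uniqueness of a solution of the form \eqref{3.2} is exactly the Graham--Zworski/Mazzeo statement quoted before \eqref{3.3}, so no extra work is needed there.

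For the second part, I would plug $f=\sum_{l}Y_{l}$ into \eqref{3.7} and apply the Funk--Hecke formula \eqref{2.13} with the kernel $K(t)=(1-r^{2})^{s}\big(1-2rt+r^{2}\big)^{-s}$, where $r=|x|$ and $t=\langle x/|x|,\xi\rangle$, since $|x-\xi|^{2}=1-2rt+r^{2}$. This gives $u(x)=\rho^{n-s}\sum_{l}\psi_{l}(r)Y_{l}$ with $\psi_{l}(r)$ a constant multiple of $(1-r^{2})^{s}r^{?}\int_{-1}^{1}(1-2rt+r^{2})^{-s}C_{l}^{(n-1)/2}(t)(1-t^{2})^{(n-2)/2}dt$. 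The generating-function definition \eqref{2.7} of the Gegenbauer polynomials extracts the factor $r^{l}$ and leaves a hypergeometric integral; using the standard evaluation of $\int_{-1}^{1}(1-2rt+r^{2})^{-\alpha}C_{l}^{\beta}(t)(1-t^{2})^{\beta-1/2}dt$ in terms of $F(\cdot,\cdot;\cdot;r^{2})$ (this is precisely the kind of formula recorded around \eqref{2.12}), together with the transformation formula \eqref{2.4} to match $F(l+\frac n2-\gamma,\frac12-\gamma;l+\frac{n+1}{2};r)$ against its $(1-r)^{c-a-b}$-twisted form, I would arrive at \eqref{3.9}. Finally $\varphi_{l}(1)=1$ follows from Gauss's summation \eqref{2.3}: with $a=l+\frac n2-\gamma$, $b=\frac12-\gamma$, $c=l+\frac{n+1}{2}$ one has $c-a-b=2\gamma>0$ and $\frac{\Gamma(c)\Gamma(c-a-b)}{\Gamma(c-a)\Gamma(c-b)}=\frac{\Gamma(l+\frac{n+1}{2})\Gamma(2\gamma)}{\Gamma(\gamma+\frac12)\Gamma(l+\frac n2+\gamma)}$, which cancels the prefactor in \eqref{3.9} exactly.

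The main obstacle I anticipate is bookkeeping of the Gamma-function constants: matching the normalization $\pi^{-n/2}\Gamma(\frac n2+\gamma)/\Gamma(\gamma)$ to the Funk--Hecke constant $(4\pi)^{(n-1)/2}\frac{l!\Gamma(\frac{n-1}{2})}{\Gamma(l+n-1)}$ and to the Gegenbauer normalization requires careful use of the duplication formula for $\Gamma$, and one must be consistent about whether the $r^{l}$ has been pulled out before or after applying \eqref{2.4}. A secondary technical point is justifying the termwise interchange of the spherical-harmonic sum with the integral defining $u$, which follows from the smoothness of $f$ and the uniform convergence of the Poisson integral for $|x|<1$, but should be stated. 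Once the constants are pinned down, conformal covariance \eqref{3.4} of $P_{2\gamma}$ together with \eqref{2.15} provides an independent consistency check on $\varphi_{l}$, since $S(s)$ acts on $Y_{l}$ by an explicit ratio of Gamma functions.
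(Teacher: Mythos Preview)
Your plan is sound and would yield a complete proof; the Funk--Hecke step and the verification $\varphi_{l}(1)=1$ via \eqref{2.3} are exactly as in the paper. The one genuine difference is how the eigenvalue equation is established. You propose to verify $-\Delta_{g_{\mathbb{B}}}P_{s}(\cdot,\xi)=s(n-s)P_{s}(\cdot,\xi)$ directly on the kernel (by a M\"obius isometry reducing to the radial case $\rho^{s}$, the classical Helgason argument), and then invoke uniqueness. The paper instead \emph{first} carries out the full Funk--Hecke expansion to obtain $u=\rho^{n-s}\sum_{l}\varphi_{l}(r^{2})r^{l}Y_{l}$, and \emph{then} verifies the PDE mode by mode: it uses the conformal covariance identity $L_{g_{\mathbb{B}}}=\rho^{(n+3)/2}(-\Delta)\rho^{-(n-1)/2}$ to reduce $(-\Delta_{g_{\mathbb{B}}}-s(n-s))[\rho^{n-s}\varphi_{l}(r^{2})r^{l}Y_{l}]$ to a second-order ODE in $r^{2}$, which is precisely the hypergeometric equation \eqref{2.2} satisfied by $\varphi_{l}$. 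Your route is shorter and more conceptual if one is willing to quote the Helgason kernel fact; the paper's route is more self-contained and has the side benefit of exhibiting directly why the radial profiles are hypergeometric. For the expansion itself the paper does a bit more work than you sketch: rather than quoting a closed Gegenbauer integral, it expands $(1-2rt+r^{2})^{-s}=\sum_{k}C_{k}^{s}(t)r^{k}$, applies Rodrigues' formula \eqref{2.8} and the differentiation formula \eqref{2.11} to reduce $\int C_{k}^{s}C_{l}^{(n-1)/2}(1-t^{2})^{(n-2)/2}dt$ to the integrals \eqref{2.12}, and then sums; the duplication formula is used at the end to collapse the constants. Your anticipated bookkeeping obstacle is real, and the paper handles it exactly by that route.
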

\begin{proof}
Set
\begin{equation*}
\begin{split}
V(x)=&\pi^{-\frac{n}{2}}
\frac{\Gamma(\frac{n}{2}+\gamma)}{\Gamma(\gamma)}
\rho^{2\gamma}\int_{\mathbb{S}^{n}}\left(\frac{1}{|x-\xi|^{2}}\right)^{s}f(\xi)d\sigma\\
=&\pi^{-\frac{n}{2}}
\frac{\Gamma(\frac{n}{2}+\gamma)}{\Gamma(\gamma)}
\rho^{2\gamma}\int_{\mathbb{S}^{n}}\frac{f(\xi)}{(1-2x\cdot\xi+|x|^{2})^{\frac{n}{2}+\gamma}}d\sigma.
\end{split}
\end{equation*}
 By Funk-Hecke formula (\ref{2.13}), if
$f=\sum\limits^{\infty}_{l=0} Y_{l}$, then
\begin{equation}\label{3.10}
\begin{split}
V(x)=&\pi^{-\frac{n}{2}}
\frac{\Gamma(\frac{n}{2}+\gamma)}{\Gamma(\gamma)}\rho^{2\gamma}\sum^{\infty}_{l=0}\lambda_{l}Y_{l},
\end{split}
\end{equation}
where
\begin{equation}\label{3.11}
\begin{split}
\lambda_{l}=&(4\pi)^{\frac{n-1}{2}}\frac{l!\Gamma(\frac{n-1}{2})}{\Gamma(l+n-1)}\int^{1}_{-1}
\frac{1}{(1-2rt+r^{2})^{\frac{n}{2}+\gamma}}C^{\frac{n-1}{2}}_{l}(t)(1-t^{2})^{\frac{n-2}{2}}dt\\
=&(4\pi)^{\frac{n-1}{2}}\frac{l!\Gamma(\frac{n-1}{2})}{\Gamma(l+n-1)}\sum^{\infty}_{k=0}r^{k}\int^{1}_{-1}
C^{\frac{n}{2}+\gamma}_{k}(t)C^{\frac{n-1}{2}}_{l}(t)(1-t^{2})^{\frac{n-2}{2}}dt.
\end{split}
\end{equation}
Using the Rodrigues formula (\ref{2.8}) and differentiation formula (\ref{2.11}), we have
\begin{equation}\label{3.12}
\begin{split}
&\sum^{\infty}_{k=0}r^{k}\int^{1}_{-1}
C^{\frac{n}{2}+\gamma}_{k}(t)C^{\frac{n-1}{2}}_{l}(t)(1-t^{2})^{\frac{n-2}{2}}dt\\
=&\frac{(-1)^{l}}{2^{l}l!}\frac{\Gamma(\frac{n}{2})\Gamma(l+n-1)}{\Gamma(n-1)\Gamma(l+\frac{n}{2})}\sum^{\infty}_{k=0}r^{k}
\int^{1}_{-1}
C^{\frac{n}{2}+\gamma}_{k}(t)\frac{d^{l}}{dt^{l}}(1-t^{2})^{l+\frac{n-2}{2}}dt\\
=&\frac{1}{2^{l}l!}\frac{\Gamma(\frac{n}{2})\Gamma(l+n-1)}{\Gamma(n-1)\Gamma(l+\frac{n}{2})}\sum^{\infty}_{k=l}2^{l}
\frac{\Gamma(\frac{n}{2}+\gamma+l)}{\Gamma(\frac{n}{2}+\gamma)}r^{k}
\int^{1}_{-1}
C^{\frac{n}{2}+\gamma+l}_{k-l}(t)(1-t^{2})^{l+\frac{n-2}{2}}dt\\
=&\frac{1}{l!}\frac{\Gamma(\frac{n}{2})\Gamma(l+n-1)}{\Gamma(n-1)\Gamma(l+\frac{n}{2})}
\frac{\Gamma(\frac{n}{2}+\gamma+l)}{\Gamma(\frac{n}{2}+\gamma)}\sum^{\infty}_{k=0}
r^{l+k}
\int^{1}_{-1}
C^{\frac{n}{2}+\gamma+l}_{k}(t)(1-t^{2})^{l+\frac{n-2}{2}}dt.
\end{split}
\end{equation}
Substituting $t=\cos\theta$, we have, by (\ref{2.12}),
\begin{equation}\label{3.13}
\begin{split}
&\sum^{\infty}_{k=0}
r^{l+k}
\int^{1}_{-1}
C^{\frac{n}{2}+\gamma+l}_{0}(t)(1-t^{2})^{l+\frac{n-2}{2}}dt
=\sum^{\infty}_{k=0}
r^{l+k}
\int^{\pi}_{0}
C^{\frac{n}{2}+\gamma+l}_{k}(\cos\theta)\sin^{2l+n-1}\theta d\theta\\
=&\sum^{\infty}_{k=0}
r^{l+2k}
\int^{\pi}_{0}
C^{\frac{n}{2}+\gamma+l}_{2k}(\cos\theta)\sin^{2l+n-1}\theta d\theta
=\frac{\Gamma(l+\frac{n}{2})\Gamma(\frac{1}{2})}{\Gamma(l+\frac{n+1}{2})}\sum^{\infty}_{k=0}
r^{l+2k}
\frac{(\frac{n}{2}+\gamma+l)_{k}(\gamma+\frac{1}{2})_{k}}{(l+\frac{n+1}{2})_{k}}\frac{1}{k!}.
\end{split}
\end{equation}
Combing (\ref{3.11})-(\ref{3.13}) and (\ref{2.4}) yields
\begin{equation}\label{3.14}
\begin{split}
\lambda_{l}=&(4\pi)^{\frac{n-1}{2}}\frac{l!\Gamma(\frac{n-1}{2})}{\Gamma(l+n-1)}
\frac{1}{l!}\frac{\Gamma(\frac{n}{2})\Gamma(l+n-1)}{\Gamma(n-1)\Gamma(l+\frac{n}{2})}
\frac{\Gamma(\frac{n}{2}+\gamma+l)}{\Gamma(\frac{n}{2}+\gamma)}\frac{\Gamma(l+\frac{n}{2})\Gamma(\frac{1}{2})}{\Gamma(l+\frac{n+1}{2})}\cdot\\
&\sum^{\infty}_{k=0}
r^{l+2k}
\frac{(\frac{n}{2}+\gamma+l)_{k}(\gamma+\frac{1}{2})_{k}}{(l+\frac{n+1}{2})_{k}}\frac{1}{k!}\\
=&(4\pi)^{\frac{n-1}{2}}\frac{\Gamma(\frac{n-1}{2})\Gamma(\frac{n}{2})\Gamma(\frac{1}{2})}{\Gamma(n-1)}
\frac{\Gamma(\frac{n}{2}+\gamma+l)}{\Gamma(\frac{n}{2}+\gamma)\Gamma(\frac{n+1}{2}+l)}
F(\gamma+\frac{1}{2},\frac{n}{2}+l+\gamma,l+\frac{n+1}{2};r^{2})r^{l}\\
=&(4\pi)^{\frac{n-1}{2}}\frac{\Gamma(\frac{n-1}{2})\Gamma(\frac{n}{2})\Gamma(\frac{1}{2})}{\Gamma(n-1)}
\frac{\Gamma(\frac{n}{2}+\gamma+l)}{\Gamma(\frac{n}{2}+\gamma)\Gamma(\frac{n+1}{2}+l)}\cdot\\
&(1-r^{2})^{-2\gamma}F(l+\frac{n}{2}-\gamma,\frac{1}{2}-\gamma,l+\frac{n+1}{2};r^{2})r^{l}\\
=&2^{-2\gamma}(4\pi)^{\frac{n-1}{2}}\frac{\Gamma(\frac{n-1}{2})\Gamma(\frac{n}{2})\Gamma(\frac{1}{2})}{\Gamma(n-1)}
\frac{\Gamma(2\gamma)}{\Gamma(\gamma+\frac{1}{2})\Gamma(\frac{n}{2}+\gamma)}\rho^{-2\gamma}\varphi_{l}(r^{2})r^{l}.
\end{split}
\end{equation}
Therefore,
\begin{equation}\label{3.15}
\begin{split}
V(x)=&\pi^{-\frac{n}{2}}
\frac{\Gamma(\frac{n}{2}+\gamma)}{\Gamma(\gamma)}\rho^{2\gamma}\sum^{\infty}_{l=0}\lambda_{l}Y_{l}\\
=&2^{n-1-2\gamma}\frac{\Gamma(2\gamma)}{\Gamma(\gamma)\Gamma(\gamma+\frac{1}{2})}
\frac{\Gamma(\frac{n-1}{2})\Gamma(\frac{n}{2})}{\Gamma(n-1)}\rho^{2\gamma}\rho^{-2\gamma}\sum^{\infty}_{l=0}\varphi_{l}(r^{2})r^{l}Y_{l}\\
=&\sum^{\infty}_{l=0}\varphi_{l}(r^{2})r^{l}Y_{l}.
\end{split}
\end{equation}
To get the last equation above, we use the   duplication formula
\begin{equation}\label{3.16}
\begin{split}
\Gamma(2z)=2^{2z-1}\frac{\Gamma(z)\Gamma(z+\frac{1}{2})}{\Gamma(\frac{1}{2})}.
\end{split}
\end{equation}
By (\ref{2.3}), we have $\varphi_{l}(1)=1$ and thus $V(x)|_{r=1}=f(x)$.

By the uniqueness of the solution, to finish the proof, it is enough to show
\begin{equation}\label{3.17}
\begin{split}
-\Delta_{g_{\mathbb{B}}} \left[\rho^{s-2\gamma}V(x)\right]-s(n-s)\rho^{s-2\gamma}V(x)=0,
\end{split}
\end{equation}
or, equivalently,
\begin{equation}\label{3.18}
\begin{split}
-\Delta_{g_{\mathbb{B}}} \left[\rho^{n-s}\varphi_{l}(r^{2})r^{l}Y_{l}\right]-s(n-s)\rho^{n-s}\varphi_{l}(r^{2})r^{l}Y_{l}=0,\;\;\forall l\geq0.
\end{split}
\end{equation}

Recall the conformal laplacian on $(\mathbb{B}^{n+1},g_{\mathbb{B}})$ is
$$L_{g\mathbb{B}}=-\Delta_{g_{\mathbb{B}}}+\frac{n-1}{4n}\textrm{Scal}(g_{\mathbb{B}}),$$
where $\textrm{Scal}(g_{\mathbb{B}})$
ia scalar curvature on $(\mathbb{B}^{n+1},g_{\mathbb{B}})$. Since $(\mathbb{B}^{n+1},g_{\mathbb{B}})$  has the
constant  sectional curvature $-1$, we have $\textrm{Scal}(g_{\mathbb{B}})=-n(n+1)$ and thus
\begin{equation}\label{3.19}
\begin{split}
L_{g_{\mathbb{B}}}=-\Delta_{g_{\mathbb{B}}}-\frac{n^{2}-1}{4}.
\end{split}
\end{equation}
By
the conformal covariant property of the conformal Laplacian
for the change of metric, we have
\begin{equation}\label{3.20}
\begin{split}
L_{g_{\mathbb{B}}}f=\rho^{\frac{n+3}{2}}(-\Delta)\left(\rho^{-\frac{n-1}{2}}f\right), \;\;\forall f\in C^{\infty}(\mathbb{B}^{n+1}),
\end{split}
\end{equation}
where $\Delta$ is the Laplacian on Euclidean space.
We have, by (\ref{3.19}) and (\ref{3.20}),
\begin{equation}\label{3.21}
\begin{split}
&\left(\Delta_{g_{\mathbb{B}}}+\frac{n^{2}-1}{4}\right)
\rho^{n-s}\varphi_{l}(r^{2})r^{l}Y_{l}
=\rho^{\frac{n+3}{2}}\Delta\left(\rho^{-\frac{n-1}{2}}\rho^{n-s}\varphi_{l}(r^{2})r^{l}Y_{l}\right)\\
=&\rho^{\frac{n+3}{2}}\Delta\left(\rho^{\frac{1}{2}-\gamma}\varphi_{l}(r^{2})r^{l}Y_{l}\right)\\
=&\rho^{\frac{n+3}{2}}\left[\Delta\left(\rho^{\frac{1}{2}-\gamma}\varphi_{l}(r^{2})\right)r^{l}Y_{l}+
2\left\langle\nabla\left(\rho^{\frac{1}{2}-\gamma}\varphi_{l}(r^{2})\right), \nabla r^{l}Y_{l}\right\rangle
\right].
\end{split}
\end{equation}
To get the last equation, we use the fact $\Delta r^{l}Y_{l}=0$
since $Y_{l}$ is the spherical harmonic of degree $l$.
Substituting the polar  coordinate formula
$\Delta=\frac{\partial^{2}}{\partial r^{2}}+\frac{n}{r}\frac{\partial}{\partial r}+\frac{1}{r^{2}}\widetilde{\Delta}$
into (\ref{3.21}), we have
\begin{equation}\label{3.22}
\begin{split}
&\left(\Delta_{g_{\mathbb{B}}}+\frac{n^{2}-1}{4}\right)
\rho^{n-s}\varphi_{l}(r^{2})r^{l}Y_{l}\\
=&\rho^{\frac{n+3}{2}}
\left[\left(\rho^{\frac{1}{2}-\gamma}\varphi_{l}(r^{2})\right)''+\frac{n+2l}{r}
\left(\rho^{\frac{1}{2}-\gamma}\varphi_{l}(r^{2})\right)'\right]r^{l}Y_{l}.
\end{split}
\end{equation}
We compute
\begin{equation}\label{3.23}
\begin{split}
\left(\rho^{\frac{1}{2}-\gamma}\varphi_{l}(r^{2})\right)'=&2r\rho^{\frac{1}{2}-\gamma}\varphi'_{l}(r^{2})+(\gamma-\frac{1}{2})r
\rho^{-\frac{1}{2}-\gamma}\varphi_{l}(r^{2});\\
\left(\rho^{\frac{1}{2}-\gamma}\varphi_{l}(r^{2})\right)''=&4r^{2}\rho^{\frac{1}{2}-\gamma}
\varphi''_{l}(r^{2})+2\rho^{-\frac{1}{2}-\gamma}\left[\rho-(1-2\gamma)r^{2}\right]\varphi'_{l}(r^{2})+\\
&\left[(\gamma-\frac{1}{2})\rho^{-\frac{1}{2}-\gamma}+(\gamma^{2}-\frac{1}{4})r^{2}\rho^{-\frac{3}{2}-\gamma}\right]\varphi_{l}(r^{2}).
\end{split}
\end{equation}
Substituting (\ref{3.23}) into (\ref{3.22}), we obtain
\begin{equation}\label{3.24}
\begin{split}
&\left(\Delta_{g_{\mathbb{B}}}+\frac{n^{2}-1}{4}\right)
\rho^{n-s}\varphi_{l}(r^{2})r^{l}Y_{l}\\
=&\rho^{\frac{n}{2}-\gamma}\left\{4r^{2}\rho^{2}
\varphi''_{l}(r^{2})+2\rho\left[l+\frac{n+1}{2}-\left(l+\frac{n+3}{2}-2\gamma\right)r^{2}\right]\varphi'_{l}(r^{2})\right.\\
&\left. +\left[(n+2l+1)(\gamma-\frac{1}{2})\rho+(\gamma^{2}-\frac{1}{4})r^{2}\right]\varphi_{l}(r^{2})\right\}r^{l}Y_{l}.
\end{split}
\end{equation}
By (\ref{2.2}),  $\varphi_{l}(r^{2})$ satisfies  the hypergeometric differential equation
\begin{equation}\label{3.25}
\begin{split}
2r^{2}\rho\varphi''_{l}(r^{2})=&-\left[l+\frac{n+1}{2}-\left(l+\frac{n+3}{2}-2\gamma\right)r^{2}\right]\varphi'_{l}(r^{2})+\\
&\left(l+\frac{n}{2}-\gamma\right)\left(\frac{1}{2}-\gamma\right)\varphi_{l}(r^{2}).
\end{split}
\end{equation}
Substituting (\ref{3.25}) into (\ref{3.24}), we obtain
\begin{equation*}
\begin{split}
&\left(\Delta_{g_{\mathbb{B}}}+\frac{n^{2}-1}{4}\right)
\rho^{n-s}\varphi_{l}(r^{2})r^{l}Y_{l}\\
=&\rho^{\frac{n}{2}-\gamma}\left[(1+2\gamma)(\gamma-\frac{1}{2})\rho+(\gamma^{2}-\frac{1}{4})r^{2}\right]\varphi_{l}(r^{2})r^{l}Y_{l}\\
=&\rho^{\frac{n}{2}-\gamma}(\gamma^{2}-\frac{1}{4})\varphi_{l}(r^{2})r^{l}Y_{l}.
\end{split}
\end{equation*}
This proves equality (\ref{3.18}). The roof of Theorem \ref{th3.1} is thereby completed.
\end{proof}

Before we compute the   explicit formulas of
 adapted
metric in term of $\rho$, we need the following Lemma:
\begin{lemma}\label{lm3.2}Let  $u(x)$ be the solution of  (\ref{a3.1}) and
$f=\sum\limits^{\infty}_{l=0} Y_{l}$ be the expansion in spherical harmonics.
If $\gamma\in(0,\frac{n}{2})\setminus  \frac{1}{2}\mathbb{N}$, where $\frac{1}{2}\mathbb{N}=\{0,\frac{1}{2},1,\cdots,\frac{n}{2},\cdots\}$,  then
\begin{equation}\label{3.26}
\begin{split}
u(x)
=&\rho^{n-s}\sum^{\infty}_{l=0}F(l+\frac{n}{2}-\gamma,\frac{1}{2}-\gamma,1-2\gamma;2\rho)r^{l}Y_{l}+ \\
&\rho^{s}
\frac{\Gamma(-\lambda)}{2^{2\gamma}\Gamma(\gamma)}\sum^{\infty}_{l=0}\frac{\Gamma(l+\frac{n}{2}+\gamma)}{
\Gamma(l+\frac{n}{2}-\gamma)}
F(\frac{1}{2}+\gamma,l+\frac{n}{2}+\gamma,1+2\gamma;2\rho)r^{l}Y_{l}.
\end{split}
\end{equation}
If  $\gamma=m+\frac{1}{2}$ with $ m=[\gamma]<\frac{n-1}{2}$, then
\begin{equation}\label{3.27}
\begin{split}
u(x)=\rho^{\frac{n-1}{2}-m}
\sum^{\infty}_{l=0}\left(\sum^{m}_{k=0}\frac{(l+\frac{n-1}{2}-m)_{k}(-m)_{k}}{(-2m)_{k}}\frac{(2\rho)^{k}}{k!}\right)r^{l}Y_{l},
\end{split}
\end{equation}
\end{lemma}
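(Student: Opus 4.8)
The plan is to deduce both formulas from the closed form already established in Theorem~\ref{th3.1}, namely $u(x)=\rho^{n-s}\sum_{l\ge 0}\varphi_{l}(r^{2})r^{l}Y_{l}$ with $\varphi_{l}$ as in \eqref{3.9}, by re-expanding each hypergeometric factor $\varphi_{l}(r^{2})$ about the point $1-r^{2}=2\rho$. Writing $a=l+\tfrac n2-\gamma$, $b=\tfrac12-\gamma$, $c=l+\tfrac{n+1}{2}$, one has $c-a-b=2\gamma$, $c-a=\gamma+\tfrac12$, $c-b=l+\tfrac n2+\gamma$, $a+b-c+1=1-2\gamma$ and $c-a-b+1=1+2\gamma$; these are exactly the parameters occurring in \eqref{3.26} and \eqref{3.27}, so the only real question is which connection formula is available, and that is governed by whether $2\gamma\in\mathbb Z$.

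\emph{Case $\gamma\in(0,\tfrac n2)\setminus\tfrac12\mathbb N$.} Then $2\gamma\notin\mathbb Z$, so the transformation formula \eqref{2.5} applies to $F(a,b;c;r^{2})$ verbatim. Substituting the parameters above and cancelling the prefactor $\tfrac{\Gamma(\gamma+1/2)}{\Gamma(2\gamma)}\tfrac{\Gamma(l+n/2+\gamma)}{\Gamma(l+(n+1)/2)}$ of \eqref{3.9}, the first summand of \eqref{2.5} contributes $F(l+\tfrac n2-\gamma,\tfrac12-\gamma,1-2\gamma;2\rho)$ with coefficient $1$, while the second contributes $(2\rho)^{2\gamma}\,\tfrac{\Gamma(\gamma+1/2)\Gamma(-2\gamma)}{\Gamma(2\gamma)\Gamma(1/2-\gamma)}\,\tfrac{\Gamma(l+n/2+\gamma)}{\Gamma(l+n/2-\gamma)}F(\tfrac12+\gamma,l+\tfrac n2+\gamma,1+2\gamma;2\rho)$. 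Since $\rho^{n-s}(2\rho)^{2\gamma}=2^{2\gamma}\rho^{s}$ (because $n-s+2\gamma=s$), it remains only to verify $2^{2\gamma}\tfrac{\Gamma(\gamma+1/2)\Gamma(-2\gamma)}{\Gamma(2\gamma)\Gamma(1/2-\gamma)}=\tfrac{\Gamma(-\gamma)}{2^{2\gamma}\Gamma(\gamma)}$, which is one application of the duplication formula \eqref{3.16} to each of $\Gamma(2\gamma)$ and $\Gamma(-2\gamma)$; reassembling $u=\rho^{n-s}\sum_{l}\varphi_{l}(r^{2})r^{l}Y_{l}$ then gives \eqref{3.26}, and via \eqref{2.15} the $\rho^{s}$-sum is recognized as a multiple of $\mathcal{P}_{2\gamma}f$ on $\rho=0$.

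\emph{Case $\gamma=m+\tfrac12$ with $m<\tfrac{n-1}{2}$.} Now $b=\tfrac12-\gamma=-m$ is a nonpositive integer, so $\varphi_{l}(r^{2})$ is a polynomial of degree $m$ in $r^{2}$ solving the hypergeometric equation \eqref{3.25} with $\varphi_{l}(1)=1$, while $c-a-b=2m+1\in\mathbb N$, so \eqref{2.5} is unavailable. Instead I use the $z\mapsto 1-z$ symmetry of the operator in \eqref{2.2}: with $a,b,c$ as above $a+b+1-c=-2m$, hence $F\!\left(l+\tfrac{n-1}{2}-m,-m;-2m;1-r^{2}\right)$ — a genuine polynomial, because the factor $(-m)_{k}$ truncates the series at $k=m$, before the zero factors $(-2m)_{k}$, $k>m$, could occur — is again a polynomial solution of \eqref{3.25}, of degree $m$, taking value $1$ at $r^{2}=1$. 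The space of polynomial solutions of \eqref{3.25} of degree $\le m$ is one–dimensional: the exponents of \eqref{3.25} at $r^{2}=1$ are $0$ and $2m+1$, so any polynomial solution vanishing at $r^{2}=1$ is a multiple of the exponent-$(2m+1)$ solution, which has a zero of order $2m+1>m$ there and so cannot be a nonzero polynomial of degree $\le m$. Therefore $\varphi_{l}(r^{2})=F\!\left(l+\tfrac{n-1}{2}-m,-m;-2m;2\rho\right)=\sum_{k=0}^{m}\tfrac{(l+(n-1)/2-m)_{k}(-m)_{k}}{(-2m)_{k}}\tfrac{(2\rho)^{k}}{k!}$, and inserting this into Theorem~\ref{th3.1} together with $n-s=\tfrac{n-1}{2}-m$ yields \eqref{3.27}. (Equivalently one may quote the terminating Euler transformation ${}_{2}F_{1}(-m,b;c;z)=\tfrac{(c-b)_{m}}{(c)_{m}}\,{}_{2}F_{1}(-m,b;b-c-m+1;1-z)$ and note that $\tfrac{(c-b)_{m}}{(c)_{m}}$ cancels the prefactor in \eqref{3.9}.)

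The computations in Case~1 are routine once one tracks the Gamma constants, the only nontrivial algebraic fact being the collapse of the scattering coefficient through \eqref{3.16}. The single conceptual point, and the step I expect to require most care, is in Case~2: one must notice that the apparently illegal lower parameter $-2m$ is harmless because the upper parameter $-m$ truncates the series, and that the exponent analysis of \eqref{3.25} at $r^{2}=1$ pins down the polynomial solution uniquely.
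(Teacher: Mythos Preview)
Your proof is correct. For the generic case $\gamma\notin\tfrac12\mathbb N$ your argument is essentially the paper's: the paper first applies the Euler transformation \eqref{2.4} and then the connection formula \eqref{2.5}, whereas you apply \eqref{2.5} directly to $F(a,b;c;r^{2})$; the Gamma bookkeeping and the use of the duplication formula \eqref{3.16} are the same either way.

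In the half-integer case your route is genuinely different. The paper obtains \eqref{3.27} by writing the degree-$m$ polynomial $\varphi_{l}$ as its Taylor expansion at $r^{2}=1$ and computing each derivative $\varphi_{l}^{(k)}(1)$ explicitly from \eqref{2.6} and \eqref{2.3}, finding $\varphi_{l}^{(k)}(1)=(-1)^{k}\tfrac{(l+(n-1)/2-m)_{k}(-m)_{k}}{(-2m)_{k}}$. You instead recognize $F\!\bigl(l+\tfrac{n-1}{2}-m,-m;-2m;2\rho\bigr)$ as a second polynomial solution of the same hypergeometric equation with the same value at $r^{2}=1$, and invoke one-dimensionality of the polynomial solution space via the exponent analysis (your argument at $r^{2}=1$ works; noting that the exponent $1-c$ at $r^{2}=0$ is negative gives an even shorter uniqueness proof). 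The paper's computation is more pedestrian but entirely self-contained from the identities listed in Section~2; your argument is conceptually cleaner and explains \emph{why} the terminating series in $2\rho$ must appear, and your parenthetical alternative via the terminating Euler transformation is the shortest path of all.
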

\begin{proof}
If $\gamma\in(0,\frac{n}{2})\setminus  \frac{1}{2}\mathbb{N}$, we have, by (\ref{2.4}) and (\ref{2.5}),
\begin{equation*}
\begin{split}
\varphi_{l}(r^{2})
=&\frac{\Gamma(\gamma+\frac{1}{2})}{\Gamma(2\gamma)}
\frac{\Gamma(l+\frac{n}{2}+\gamma)}{\Gamma(l+\frac{n+1}{2})}
F(l+\frac{n}{2}-\gamma,\frac{1}{2}-\gamma,l+\frac{n+1}{2};r^{2})\\
=&\frac{\Gamma(\gamma+\frac{1}{2})}{\Gamma(2\gamma)}
\frac{\Gamma(l+\frac{n}{2}+\gamma)}{\Gamma(l+\frac{n+1}{2})}(1-r^{2})^{2\gamma}F(\frac{1}{2}+\gamma,l+\frac{n}{2}+\gamma,l+\frac{n+1}{2};r^{2})\\
=&\frac{\Gamma(\gamma+\frac{1}{2})}{\Gamma(2\gamma)}
\frac{\Gamma(l+\frac{n}{2}+\gamma)}{\Gamma(l+\frac{n+1}{2})}(1-r^{2})^{2\gamma}\cdot\\
&\left[
\frac{\Gamma(l+\frac{n+1}{2})\Gamma(-2\gamma)}{\Gamma(l+\frac{n}{2}-\gamma)\Gamma(\frac{1}{2}-\gamma)}
F(\frac{1}{2}+\gamma,l+\frac{n}{2}+\gamma,1+2\gamma;1-r^{2})
\right.\\
&+\left.(1-r^{2})^{-2\gamma}\frac{\Gamma(l+\frac{n+1}{2})\Gamma(2\gamma)}{\Gamma(l+\frac{n}{2}+\gamma)\Gamma(\frac{1}{2}+\gamma)}
F(l+\frac{n}{2}-\gamma,\frac{1}{2}-\gamma,1-2\gamma;1-r^{2})
\right]\\
=&2^{2\gamma}\rho^{2\gamma}\frac{\Gamma(\gamma+\frac{1}{2})\Gamma(l+\frac{n}{2}+\gamma)\Gamma(-2\gamma)}{\Gamma(2\gamma)
\Gamma(l+\frac{n}{2}-\gamma)\Gamma(\frac{1}{2}-\gamma)}
F(\frac{1}{2}+\gamma,l+\frac{n}{2}+\gamma,1+2\gamma;2\rho)+\\
&F(l+\frac{n}{2}-\gamma,\frac{1}{2}-\gamma,1-2\gamma;2\rho).
\end{split}
\end{equation*}
Using the duplication formula (\ref{3.16}), we have
\begin{equation}\label{3.28}
\begin{split}
\varphi_{l}(r^{2})
=&\rho^{2\gamma}
\frac{\Gamma(-\lambda)}{2^{2\gamma}\Gamma(\gamma)}\frac{\Gamma(l+\frac{n}{2}+\gamma)}{
\Gamma(l+\frac{n}{2}-\gamma)}
F(\frac{1}{2}+\gamma,l+\frac{n}{2}+\gamma,1+2\gamma;2\rho)\\
&+F(l+\frac{n}{2}-\gamma,\frac{1}{2}-\gamma,1-2\gamma;2\rho).
\end{split}
\end{equation}
Substituting (\ref{3.28}) into (\ref{3.8}), we get (\ref{3.26}).

If  $\gamma=m+\frac{1}{2}$, then

\begin{equation*}
\begin{split}
\varphi_{l}(r)
=&\frac{\Gamma(m+1)}{\Gamma(2m+1)}
\frac{\Gamma(l+m+\frac{n+1}{2})}{\Gamma(l+\frac{n+1}{2})}
F(l+\frac{n-1}{2}-m,-m;l+\frac{n+1}{2};r)
\end{split}
\end{equation*}
is a polynomial of degree $m$ and thus
\begin{equation}\label{3.29}
\begin{split}
\varphi_{l}(r)
=&\sum^{m}_{k=0}\frac{\varphi^{(k)}_{l}(1)}{k!}(r-1)^{k}.
\end{split}
\end{equation}
Using (\ref{2.6}) and (\ref{2.4}), we have, for $0\leq k\leq m$,
\begin{equation}\label{3.30}
\begin{split}
\varphi^{(k)}_{l}(1)=&\frac{\Gamma(m+1)}{\Gamma(2m+1)}
\frac{\Gamma(l+m+\frac{n+1}{2})}{\Gamma(l+\frac{n+1}{2})}\frac{(l+\frac{n-1}{2}-m)_{k}(-m)_{k}}{(l+\frac{n+1}{2})_{k}}\cdot\\
&F(l+\frac{n-1}{2}-m+k,-m+k;l+\frac{n+1}{2}+k;1)\\
=&\frac{\Gamma(2m+1-k)}{\Gamma(2m+1)}
(l+\frac{n-1}{2}-m)_{k}(-m)_{k}\\
=&(-1)^{k}\frac{(l+\frac{n-1}{2}-m)_{k}(-m)_{k}}{(-2m)_{k}}.
\end{split}
\end{equation}
Substituting (\ref{3.30}) into (\ref{3.29}) and using (\ref{3.8}), we get (\ref{3.27}).
\end{proof}

\begin{corollary}\label{co3.3} Let $\vartheta_{s}$ be the
 solution of (\ref{a3.1}) when $f=1$.
If $\gamma\in(0,\frac{n}{2})\setminus \frac{1}{2}\mathbb{N}$, then
\begin{equation}\label{3.31}
\begin{split}
\vartheta_{s}(\rho)=&\rho^{\frac{n}{2}-\gamma}F(\frac{n}{2}-\gamma,\frac{1}{2}-\gamma;1-2\gamma;2\rho)+ \\
&\rho^{\frac{n}{2}+\gamma}\frac{\Gamma(-\lambda)}{2^{2\gamma}\Gamma(\gamma)}
\frac{\Gamma(\frac{n}{2}+\gamma)}{
\Gamma(\frac{n}{2}-\gamma)}
F(\frac{1}{2}+\gamma,\frac{n}{2}+\gamma;1+2\gamma;2\rho).
\end{split}
\end{equation}
If $\gamma=m+\frac{1}{2}$ with $ m=[\gamma]<\frac{n-1}{2}$, then
\begin{equation}\label{3.32}
\begin{split}
\vartheta_{s}(\rho)
=&\rho^{\frac{n-1}{2}-m}\sum^{m}_{k=0}\frac{(\frac{n-1}{2}-m)_{k}(-m)_{k}}{(-2m)_{k}}\frac{(2\rho)^{k}}{k!}.\\
\end{split}
\end{equation}
\end{corollary}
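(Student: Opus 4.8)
The plan is to deduce both identities directly from Lemma~\ref{lm3.2} by taking the Dirichlet datum $f\equiv 1$. The constant function $1$ is itself a spherical harmonic of degree $0$, so the expansion $f=\sum_{l\ge0}Y_l$ collapses to the single term $Y_0\equiv 1$. Hence in the series appearing in (\ref{3.26}) and (\ref{3.27}) every summand with $l\ge1$ drops out, and the surviving $l=0$ term carries the factor $r^{0}Y_{0}=1$. Thus $\vartheta_{s}=u$ is obtained simply by setting $l=0$ in the two formulas of Lemma~\ref{lm3.2}.

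Concretely, for $\gamma\in(0,\tfrac n2)\setminus\tfrac12\mathbb{N}$, putting $l=0$ in (\ref{3.26}) and recalling $n-s=\tfrac n2-\gamma$ and $s=\tfrac n2+\gamma$ yields precisely (\ref{3.31}); and for $\gamma=m+\tfrac12$ with $m=[\gamma]<\tfrac{n-1}{2}$, putting $l=0$ in (\ref{3.27}) yields precisely (\ref{3.32}). The only structural point worth recording is that the solution is genuinely a function of $\rho$ alone: this follows from the $O(n+1)$-invariance of both the metric $g_{\mathbb{B}}$ and the datum $f\equiv1$ together with the uniqueness of the solution of (\ref{a3.1}), and it is in any case manifest from the closed forms displayed above (equivalently, one may read it off from Theorem~\ref{th3.1}, where $f\equiv1$ forces $u=\rho^{n-s}\varphi_{0}(r^{2})$, and then apply the hypergeometric transformation formulas (\ref{2.4}) and (\ref{2.5}) exactly as in the proof of Lemma~\ref{lm3.2}).

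Accordingly I expect no genuine obstacle here: the corollary is a bookkeeping specialization of the lemma. The only care needed is the correct matching, in passing from the general-$l$ expressions to the $l=0$ case, of the Pochhammer symbols $(\cdot)_{k}$ and of the powers of $\rho$ (namely $\rho^{n-s}=\rho^{n/2-\gamma}$, $\rho^{s}=\rho^{n/2+\gamma}$, and $\rho^{\frac{n-1}{2}-m}$ in the half-integer case), which is routine.
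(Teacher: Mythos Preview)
Your proposal is correct and matches the paper's approach exactly: the corollary is stated without proof in the paper, being an immediate specialization of Lemma~\ref{lm3.2} to the case $f\equiv 1$, i.e., to the single spherical-harmonic term $Y_0=1$ with $l=0$. Your additional remark on the $\rho$-dependence via rotational invariance is a nice touch but not needed for the argument.
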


Now we can compute the explicit formulas of
 adapted
metric $g^{\ast}$ on the model case $(\mathbb{B}^{n+1}, \mathbb{S}^{n}, g_{\mathbb{B}})$.
\begin{proposition}
Let $\gamma\in(0,\frac{n}{2})$ and $s=\frac{n}{2}+\gamma$.  On the model case $(\mathbb{B}^{n+1}, \mathbb{S}^{n}, g_{\mathbb{B}})$ we have

(1) if  $\gamma\in(0,\frac{n}{2})\setminus\frac{1}{2}\mathbb{N}$, then $g^{\ast}=\psi_{\gamma}^{\frac{2}{n-s}}|dx|^{2}=\psi_{\gamma}^{\frac{4}{n-2\gamma}}|dx|^{2}$,
where
\begin{equation}\label{3.33}
\begin{split}
\psi_{\gamma}=&F(\frac{n}{2}-\gamma,\frac{1}{2}-\gamma,1-2\gamma;2\rho)+ \rho^{2\gamma}\frac{1}{d_{\gamma}}
\frac{\Gamma(\frac{n}{2}+\gamma)}{
\Gamma(\frac{n}{2}-\gamma)}
F(\frac{1}{2}+\gamma,\frac{n}{2}+\gamma,1+2\gamma;2\rho);
\end{split}
\end{equation}

(2)  if  $\gamma=m+\frac{1}{2}<\frac{n}{2}$ with $m=[\gamma]$ , then $g^{\ast}=\psi_{m+\frac{1}{2}}^{\frac{4}{n-2m-1}}|dx|^{2}$,
where
\begin{equation}\label{3.34}
\begin{split}
\psi_{m+\frac{1}{2}}=&\sum^{m}_{k=0}\frac{(\frac{n-1}{2}-m)_{k}(-m)_{k}}{(-2m)_{k}}\frac{(2\rho)^{k}}{k!};
\end{split}
\end{equation}

(3) if $\gamma=\frac{n}{2}$ and $n$ is an odd integer, then
\begin{equation}\label{3.35}
\begin{split}
g^{\ast}=\exp \left\{2\frac{\Gamma(\frac{n+1}{2})}{\Gamma(n)}\sum\limits^{(n-1)/2}_{k=1}\frac{\Gamma(n-k)}{\Gamma(\frac{n+1}{2}-k)k}(2\rho)^{k}
\right\}|dx|^{2}.
\end{split}
\end{equation}
\end{proposition}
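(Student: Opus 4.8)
The plan is to read off all three formulas directly from the expressions for $\vartheta_s$ obtained in Corollary \ref{co3.3}, together with the single elementary fact that on the model ball the defining function $\rho=\frac{1-|x|^2}{2}$ satisfies $g_{\mathbb{B}}=\rho^{-2}|dx|^2$, so that $\rho_s^2 g_{\mathbb{B}}=(\rho_s/\rho)^2|dx|^2$ for every defining function $\rho_s$. Cases (1) and (2) will then be immediate, while case (3) requires passing to the limit $\gamma\to\frac n2$.

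First I would settle cases (1) and (2). Writing $s=\frac n2+\gamma$, so that $n-s=\frac n2-\gamma=\frac{n-2\gamma}{2}$, one reads off from (\ref{3.31}) (resp. (\ref{3.32})) that $\vartheta_s=\rho^{\,n-s}\psi_\gamma$, where $\psi_\gamma$ is exactly the bracketed factor (\ref{3.33}) when $\gamma\notin\frac12\mathbb{N}$ and the polynomial (\ref{3.34}) when $\gamma=m+\frac12$; the only points to note are the duplication formula (\ref{3.16}) and $\frac1{d_\gamma}=\frac{\Gamma(-\gamma)}{2^{2\gamma}\Gamma(\gamma)}$ for the matching of constants, together with $\psi_\gamma|_{\rho=0}=1$ and $\vartheta_s>0$ on all of $\mathbb{B}^{n+1}$ (positive boundary Yamabe, cf. \cite{le}), so that $\rho_s:=\vartheta_s^{1/(n-s)}=\rho\,\psi_\gamma^{1/(n-s)}$ is a genuine defining function. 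Then $g^\ast=\rho_s^2 g_{\mathbb{B}}=\psi_\gamma^{2/(n-s)}|dx|^2=\psi_\gamma^{4/(n-2\gamma)}|dx|^2$, which is (1) and (2).

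The substance is case (3). Here I would pass to the limit $\gamma\to\frac n2$ (equivalently $s\to n$), now using that $n$ is odd so that $\frac{n+1}{2}\in\mathbb{N}$. From $\vartheta_s=\rho^{\,n-s}\psi_\gamma$ and $\psi_{n/2}=1$ (which recovers $\vartheta_n\equiv1$) one has
\[
\rho_s^2 g_{\mathbb{B}}=\psi_\gamma^{2/(n-s)}|dx|^2=\exp\!\Big(\tfrac{2\log\psi_\gamma}{\,n/2-\gamma\,}\Big)|dx|^2\xrightarrow[\gamma\to n/2]{}\;\exp\!\Big(-2\,\tfrac{d}{d\gamma}\psi_\gamma\Big|_{\gamma=n/2}\Big)|dx|^2,
\]
the last step because $\log\psi_{n/2}=0$ and $\psi_{n/2}=1$; this agrees with $e^{2\tau}g_{\mathbb{B}}$ for $\tau$ as in (\ref{3.5}), since $\tau=-\frac{d}{ds}\vartheta_s|_{s=n}=\log\rho-\frac{d}{d\gamma}\psi_\gamma|_{\gamma=n/2}$ and $g_{\mathbb{B}}=\rho^{-2}|dx|^2$. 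It thus remains to compute $\frac{d}{d\gamma}\psi_\gamma|_{\gamma=n/2}$, and for this I would use the form $\psi_\gamma=\varphi_0(|x|^2)$ coming from Theorem \ref{th3.1} (equation (\ref{3.9}) with $l=0$),
\[
\psi_\gamma=\frac{\Gamma(\gamma+\frac12)\,\Gamma(\frac n2+\gamma)}{\Gamma(2\gamma)\,\Gamma(\frac{n+1}{2})}\,F\!\Big(\tfrac n2-\gamma,\tfrac12-\gamma,\tfrac{n+1}{2};|x|^2\Big),
\]
which is equal to (\ref{3.33}) by the transformation (\ref{2.5}) and, crucially, is analytic in $\gamma$ near $\frac n2$ since the parameter $\frac{n+1}{2}$ is a positive integer (no resonance). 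Differentiating the series term by term at $\gamma=\frac n2$: the prefactor $c(\gamma)$ contributes a constant $c'(\frac n2)$, and in the $k$-th term ($k\ge1$) only the factor $(\tfrac n2-\gamma)_k$, which vanishes at $\gamma=\frac n2$ with derivative $-(k-1)!$, survives, while $(\tfrac12-\gamma)_k|_{\gamma=n/2}=(\tfrac{1-n}{2})_k$ vanishes for $k\ge\frac{n+1}{2}$; hence
\[
\tfrac{d}{d\gamma}\psi_\gamma\Big|_{\gamma=n/2}=c'(\tfrac n2)-\sum_{k=1}^{(n-1)/2}\frac1k\,\frac{(\frac{1-n}{2})_k}{(\frac{n+1}{2})_k}\,|x|^{2k},
\]
a polynomial of degree $\frac{n-1}{2}$ in $|x|^2=1-2\rho$. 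Since $\psi_\gamma|_{|x|=1}=1$ for every $\gamma$, evaluating at $|x|=1$ pins down $c'(\frac n2)$; re-expanding this polynomial in powers of $2\rho=1-|x|^2$ and rewriting the Pochhammer ratios via $(a)_k=(-1)^k\Gamma(1-a)/\Gamma(1-a-k)$ (so that $\frac{(\frac{1-n}{2})_k}{(1-n)_k}=\frac{\Gamma(\frac{n+1}{2})\Gamma(n-k)}{\Gamma(n)\Gamma(\frac{n+1}{2}-k)}$) yields
\[
-2\,\tfrac{d}{d\gamma}\psi_\gamma\Big|_{\gamma=n/2}=2\,\frac{\Gamma(\frac{n+1}{2})}{\Gamma(n)}\sum_{k=1}^{(n-1)/2}\frac{\Gamma(n-k)}{\Gamma(\frac{n+1}{2}-k)\,k}\,(2\rho)^k,
\]
which is (\ref{3.35}).

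The main obstacle is the whole of case (3): one must justify the limit $s\to n$ (termwise differentiation of the hypergeometric series and interchange of the limit with the conformal factor) and then perform the change of variable from powers of $|x|^2$ to powers of $1-|x|^2=2\rho$ together with the attendant Gamma-function bookkeeping so that the coefficients land exactly on (\ref{3.35}); the cases $n=3$ (where $\frac{d}{d\gamma}\psi_\gamma|_{\gamma=3/2}=-\rho$, $g^\ast=e^{2\rho}|dx|^2$) and $n=5$ are convenient checks. One could alternatively differentiate (\ref{3.33}) directly in $\gamma$, but then $\partial_\gamma$ of the two hypergeometric pieces each carry spurious $O(\rho^{\ge n})$ tails — coming from the simple zero of $1/\Gamma(\frac n2-\gamma)$ at $\gamma=\frac n2$ — which one would then have to show cancel; the $\varphi_0$-representation above sidesteps this, the answer there being manifestly a polynomial of degree $\frac{n-1}{2}$.
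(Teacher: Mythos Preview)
Your treatment of (1) and (2) coincides with the paper's: both read the adapted metric straight off Corollary~\ref{co3.3} via $g^\ast=\vartheta_s^{2/(n-s)}g_{\mathbb{B}}=(\vartheta_s\rho^{s-n})^{2/(n-s)}|dx|^2$.

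For (3) you take a genuinely different route. The paper differentiates the $2\rho$-expansion (\ref{3.31}) itself: the third hypergeometric parameter $1-2\gamma$ tends to the non-positive integer $1-n$, so the difference quotient of the first hypergeometric piece throws off an infinite tail $\sum_{k\ge n}(\cdots)(2\rho)^k$, while the simple zero of $1/\Gamma(\tfrac n2-\gamma)$ at $\gamma=\tfrac n2$ resurrects a matching $\rho^{n}F(\tfrac{n+1}{2},n;1+n;2\rho)$ term from the scattering piece; the paper then shows, via the duplication formula (\ref{3.16}) and Euler's reflection formula, that these two tails cancel exactly, leaving only the finite sum over $1\le k\le\tfrac{n-1}{2}$. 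You instead work with the $|x|^2$-representation $\psi_\gamma=\varphi_0(|x|^2)$ from (\ref{3.9}), whose third parameter $\tfrac{n+1}{2}$ is a fixed positive integer, so no singular terms ever arise and $\partial_\gamma\psi_\gamma|_{\gamma=n/2}$ is visibly a polynomial of degree $\tfrac{n-1}{2}$ in $|x|^2$; your device of fixing $c'(\tfrac n2)$ from $\psi_\gamma(1)\equiv1$ is neat (and agrees with the direct value $c'(\tfrac n2)=\psi(\tfrac{n+1}{2})-\psi(n)$, digamma). The price is the final step you flag: rewriting that polynomial in powers of $2\rho=1-|x|^2$ so as to land on (\ref{3.35}) amounts to the identity
\[
(-1)^{j}\sum_{k=j}^{(n-1)/2}\binom{k}{j}\,\frac{(\tfrac{1-n}{2})_k}{k\,(\tfrac{n+1}{2})_k}
=\frac{(\tfrac{1-n}{2})_j}{j\,(1-n)_j},\qquad 1\le j\le\tfrac{n-1}{2},
\]
which you assert but do not prove. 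It is true (one clean way: both your expression and (\ref{3.35}) give radial solutions of $-\Delta_{g_{\mathbb{B}}}\tau=n$ that vanish at $\rho=0$ after subtracting $\ln\rho$, hence coincide; alternatively a Pfaff-type transformation after differentiating under an Euler integral), but it is where all the labour in your approach is hiding. The paper's tail-cancellation is the dual computation and is carried out in full there; your route avoids any delicate limit but trades it for this change-of-basis identity.
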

\begin{proof}
By the definition of $g^{\ast}$, we have
$g^{\ast}=\vartheta^{\frac{2}{n-s}}_{s}g_{\mathbb{B}}.$ By Corollary \ref{co3.3}, we get (1) and (2).
Now we prove (\ref{3.35}). By the definition of $g^{\ast}$, we have $g^{\ast}=e^{2\tau}\rho^{-2}|dx|^{2}$,
where
\begin{equation}\label{3.36}
\begin{split}
\tau=-\frac{d}{ds}\vartheta_{s}|_{s=n}=-\lim_{s\rightarrow n}\frac{\vartheta_{s}-\vartheta_{n}}{s-n}=\lim_{\gamma\rightarrow\frac{n}{2}}
\frac{\vartheta_{s}-\vartheta_{n}}{\frac{n}{2}-\gamma}.
\end{split}
\end{equation}
By Corollary \ref{co3.3}, $\lim\limits_{s\rightarrow n}\vartheta_{s}=1$. Therefore, substituting (\ref{3.31})
into (\ref{3.36}), we have
\begin{equation}\label{3.37}
\begin{split}
\tau=&\lim_{\gamma\rightarrow\frac{n}{2}}
\frac{\rho^{\frac{n}{2}-\gamma}}{\frac{n}{2}-\gamma}\left[F(\frac{n}{2}-\gamma,\frac{1}{2}-\gamma;1-2\gamma;2\rho)-1\right]+
\lim_{\gamma\rightarrow\frac{n}{2}}
\frac{\rho^{\frac{n}{2}-\gamma}-1}{\frac{n}{2}-\gamma}\\
&\lim_{\gamma\rightarrow\frac{n}{2}}\frac{\rho^{\frac{n}{2}+\gamma}}{\frac{n}{2}-\gamma}\frac{\Gamma(-\lambda)}{2^{2\gamma}\Gamma(\gamma)}
\frac{\Gamma(\frac{n}{2}+\gamma)}{
\Gamma(\frac{n}{2}-\gamma)}
F(\frac{1}{2}+\gamma,\frac{n}{2}+\gamma;1+2\gamma;2\rho)\\
=&\lim_{\gamma\rightarrow\frac{n}{2}}
\frac{1}{\frac{n}{2}-\gamma}\left[F(\frac{n}{2}-\gamma,\frac{1}{2}-\gamma;1-2\gamma;2\rho)-1\right]+
\ln \rho\\
&\rho^{n}\frac{\Gamma(-\frac{n}{2})\Gamma(n)}{2^{n}\Gamma(\frac{n}{2})}
F(\frac{n+1}{2},n;1+n;2\rho).
\end{split}
\end{equation}
To get the last equation abouve, we use the fact $(\frac{n}{2}-\gamma)\Gamma(\frac{n}{2}-\gamma)=\Gamma(\frac{n}{2}-\gamma+1)\rightarrow1$ as $\gamma\rightarrow\frac{n}{2}$.
We compute
\begin{equation}\label{3.38}
\begin{split}
&\lim_{\gamma\rightarrow\frac{n}{2}}
\frac{1}{\frac{n}{2}-\gamma}\left[F(\frac{n}{2}-\gamma,\frac{1}{2}-\gamma;1-2\gamma;2\rho)-1\right]\\
=&\lim_{\gamma\rightarrow\frac{n}{2}}\frac{1}{\frac{n}{2}-\gamma}\sum^{\infty}_{k=1}\frac{(\frac{n}{2}-\gamma)_{k}(\frac{1}{2}-\gamma)_{k}}{(1-2\gamma)_{k}}
\frac{(2\rho)^{k}}{k!}\\
=&\sum^{(n-1)/2}_{k=1}\frac{(k-1)!(\frac{1-n}{2})_{k}}{(1-n)_{k}}\frac{(2\rho)^{k}}{k!}+\frac{1}{2}\sum^{\infty}_{k=n}\frac{(k-1)!(\frac{1-n}{2})_{\frac{n-1}{2}}
(k-\frac{n+1}{2})!}{(1-n)_{n-1}(k-n)!}\frac{(2\rho)^{k}}{k!}\\
=&\sum^{(n-1)/2}_{k=1}\frac{(k-1)!(\frac{1-n}{2})_{k}}{(1-n)_{k}}\frac{(2\rho)^{k}}{k!}+
(-1)^{\frac{n-1}{2}}\frac{2^{n-1}\Gamma(\frac{n+1}{2})}{\Gamma(n)}\rho^{n} \sum^{\infty}_{k=0}\frac{(k+\frac{n-1}{2})!}{(k+n)k!}(2\rho)^{k}.
\end{split}
\end{equation}
Using the   duplication formula (\ref{3.16}), we have
\begin{equation}\label{3.39}
\begin{split}
&(-1)^{\frac{n-1}{2}}\frac{2^{n-1}\Gamma(\frac{n+1}{2})}{\Gamma(n)}\rho^{n} \sum^{\infty}_{k=0}\frac{(k+\frac{n-1}{2})!}{(k+n)k!}(2\rho)^{k}\\
=&
(-1)^{\frac{n-1}{2}}\frac{2^{n-1}\Gamma(\frac{n+1}{2})^{2}}{\Gamma(n+1)}\rho^{n}
F(\frac{n+1}{2},n;1+n;2\rho)\\
=&(-1)^{\frac{n-1}{2}}\frac{\Gamma(\frac{1}{2})\Gamma(\frac{n+1}{2})}{\Gamma(\frac{n}{2}+1)}\rho^{n}
F(\frac{n+1}{2},n;1+n;2\rho).
\end{split}
\end{equation}
Also by the   duplication formula (\ref{3.16}), we have
\begin{equation}\label{3.40}
\begin{split}
&\rho^{n}\frac{\Gamma(-\frac{n}{2})\Gamma(n)}{2^{n}\Gamma(\frac{n}{2})}
F(\frac{n+1}{2},n;1+n;2\rho)\\
=&\rho^{n}\frac{\Gamma(-\frac{n}{2})\Gamma(\frac{n+1}{2})}{2\Gamma(\frac{1}{2})}
F(\frac{n+1}{2},n;1+n;2\rho)\\
=&(-1)^{\frac{n+1}{2}}\frac{\Gamma(\frac{1}{2})\Gamma(\frac{n+1}{2})}{\Gamma(\frac{n}{2}+1)}\rho^{n}
F(\frac{n+1}{2},n;1+n;2\rho).
\end{split}
\end{equation}
To get the last equation above, we use $\Gamma(\frac{1}{2})=\sqrt{\pi}$ and the Euler's reflection formula
\[
\Gamma\left(-\frac{n}{2}\right)\Gamma\left(1+\frac{n}{2}\right)=\frac{\pi}{-\sin\frac{n}{2}\pi}=(-1)^{\frac{n+1}{2}}\pi.
\]
Substituting (\ref{3.38}) into (\ref{3.37}) and using (\ref{3.39})-(\ref{3.40}),
we get
\begin{equation*}
\begin{split}
\tau=&\sum^{(n-1)/2}_{k=1}\frac{(k-1)!(\frac{1-n}{2})_{k}}{(1-n)_{k}}\frac{(2\rho)^{k}}{k!}+\ln\rho
=\frac{\Gamma(\frac{n+1}{2})}{\Gamma(n)}\sum\limits^{(n-1)/2}_{k=1}\frac{\Gamma(n-k)}{\Gamma(\frac{n+1}{2}-k)k}(2\rho)^{k}+\ln\rho.
\end{split}
\end{equation*}
The desired result follows.
\end{proof}

We remark
that one can also find the metric $g^{\ast}$ in dimension $2m+1$ by a  ``dimension continuity" in the spirit of the work of Branson
(see \cite{br2})),  by computing the limit
$$e^{2\tau}\rho^{-2}=\lim_{n\rightarrow2m+1}(\psi_{m+\frac{1}{2}})^{\frac{4}{n-2m-1}}.$$

\section{Proof of Theorem \ref{th1.6} and \ref{th1.7}}
In this section, we let $\gamma=m+\frac{1}{2}$ with $m=[\gamma]$. As in the proof of Theorem \ref{th3.1}, we  set
\begin{equation}\label{4.1}
\begin{split}
V_{m}(x)=&\pi^{-\frac{n}{2}}
\frac{\Gamma(\frac{n}{2}+\gamma)}{\Gamma(\gamma)}
\rho^{2m+1}\int_{\mathbb{S}^{n}}\left(\frac{1}{|x-\xi|^{2}}\right)^{\frac{n+1}{2}+m}f(\xi)d\sigma
\end{split}
\end{equation}
such that $\rho^{\frac{n-1}{2}-m}V_{m}(x)$ is the solution of (\ref{a3.1}). Moreover, if   $f=\sum\limits^{\infty}_{l=0} Y_{l}$,  then
\begin{equation}\label{4.2}
\begin{split}
V_{m}(x)=\sum^{\infty}_{l=0}\varphi_{l}(r^{2})r^{l}Y_{l},
\end{split}
\end{equation}
where
\begin{equation}\label{4.3}
\begin{split}
\varphi_{l}(r)
=&\frac{\Gamma(m+1)}{\Gamma(2m+1)}
\frac{\Gamma(l+\frac{n+1}{2}+m)}{\Gamma(l+\frac{n+1}{2})}
F(l+\frac{n-1}{2}-m,-m,l+\frac{n+1}{2};r).
\end{split}
\end{equation}
\begin{lemma}\label{lm4.1}
There holds, for $0\leq k\leq m+1$,
\begin{equation}\label{4.4}
\begin{split}
\Delta^{k} V_{m}(x)=&4^{k}\sum^{\infty}_{l=0}\frac{\Gamma(m+1)\Gamma(l+\frac{n+1}{2}+m)}{\Gamma(l+\frac{n+1}{2})\Gamma(2m+1)}
(l+\frac{n-1}{2}-m)_{k}(-m)_{k}\cdot\\
&F(l+\frac{n-1}{2}-m+k,-m+k,l+\frac{n+1}{2},r^{2})r^{l}Y_{l}.
\end{split}
\end{equation}
In particular, we have
\begin{equation}\label{4.5}
\begin{split}
\Delta^{m+1} V_{m}(x)=0.
\end{split}
\end{equation}
because of $(-m)_{m+1}=0$.
\end{lemma}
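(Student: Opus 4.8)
The plan is to work from the spherical‑harmonic expansion (\ref{4.2}) of $V_m$, apply the Euclidean Laplacian term by term, and reduce everything to one elementary parameter–shift identity for the hypergeometric function. So first I would record the one–step radial formula: if $Y_l\in\mathcal{H}_l$ and $\phi$ is smooth near $[0,1]$, then using the polar expression $\Delta=\partial_r^2+\frac{n}{r}\partial_r+\frac{1}{r^2}\widetilde\Delta$ on $\mathbb{R}^{n+1}$ together with $\widetilde\Delta Y_l=-l(n-1+l)Y_l$, the terms proportional to $r^{l-2}\phi(r^2)$ cancel and one is left with
\begin{equation*}
\Delta\big(\phi(r^2)r^lY_l\big)=\Big[4r^2\phi''(r^2)+(4l+2n+2)\,\phi'(r^2)\Big]r^lY_l .
\end{equation*}
Writing $t=r^2$ and $c=l+\tfrac{n+1}{2}$, the bracket is exactly $4\big(t\phi''(t)+c\,\phi'(t)\big)$, and note $4l+2n+2=4c$ is what forces the cancellation.

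The key point — and the only genuine content of the proof — is the shift identity: if $\phi(t)=F(a,b;c;t)$, then
\begin{equation*}
t\,\phi''(t)+c\,\phi'(t)=ab\,F(a+1,b+1;c;t).
\end{equation*}
I would prove this either by a direct manipulation of the series (\ref{2.1}), using $(c)_k=(c)_{k-1}(c+k-1)$ and $(a)_{k+1}=a\,(a+1)_k$, or, more conceptually, from the hypergeometric equation (\ref{2.2}): with $\theta=t\,\frac{d}{dt}$ one checks $(\theta+a)(\theta+b)F=t^2F''+(a+b+1)tF'+abF$, which by (\ref{2.2}) equals $tF''+cF'$, while termwise $(\theta+a)(\theta+b)F(a,b;c;t)=ab\,F(a+1,b+1;c;t)$. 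Combining the two displays, $\Delta$ sends $\phi(r^2)r^lY_l$ with $\phi=F(a,b;c;\cdot)$ and $c=l+\tfrac{n+1}{2}$ to $4ab\,F(a+1,b+1;c;\cdot)\,r^lY_l$, with $c$ \emph{unchanged}; hence the formula may be iterated.

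Now I apply this with the parameters from (\ref{4.3}): $a=l+\tfrac{n-1}{2}-m$, $b=-m$, $c=l+\tfrac{n+1}{2}$, and normalising constant $C_l=\frac{\Gamma(m+1)\Gamma(l+\frac{n+1}{2}+m)}{\Gamma(2m+1)\Gamma(l+\frac{n+1}{2})}$. Applying $\Delta$ exactly $k$ times multiplies the coefficient of the $l$‑th term by $4^k(a)_k(b)_k=4^k\big(l+\tfrac{n-1}{2}-m\big)_k(-m)_k$ and shifts the parameters to $a+k$, $b+k$, $c$; together with $C_l$ this is precisely (\ref{4.4}). Finally, taking $k=m+1$ gives $(-m)_{m+1}=(-m)(-m+1)\cdots 0=0$, so every term vanishes and $\Delta^{m+1}V_m\equiv 0$, which is (\ref{4.5}).

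The only analytic point to verify is that termwise differentiation is legitimate: for $f\in C^\infty(\mathbb{S}^n)$ one checks that the series (\ref{4.2}) and each of the series obtained by applying $\Delta^k$ converge in $C^\infty$ on compact subsets of $\mathbb{B}^{n+1}$ — and in fact up to the boundary, since every $\varphi_l$ is a polynomial of degree $m$ in $r^2$ — which follows from polynomial bounds on $C_l$ and the rapid decay of the Fourier coefficients of $f$. This is routine and I would state it briefly rather than belabour it; the substantive step remains the hypergeometric shift identity above.
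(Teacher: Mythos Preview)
Your proof is correct and follows essentially the same route as the paper: both argue by induction on $k$, applying the polar Laplacian on $\mathbb{R}^{n+1}$ to each mode $\varphi_l(r^2)r^lY_l$ and showing that one application of $\Delta$ sends $F(a,b;c;r^2)r^lY_l$ (with $c=l+\tfrac{n+1}{2}$) to $4ab\,F(a+1,b+1;c;r^2)r^lY_l$. The only difference is cosmetic: the paper expands $F$ as the finite polynomial $\sum_{j=0}^{m-k}\frac{(a)_j(b)_j}{(c)_j\,j!}r^{2j}$ and applies $\Delta$ termwise to $r^{2j+l}Y_l$, whereas you isolate the operator $t\partial_t^2+c\partial_t$ and obtain the same parameter shift in one stroke from the hypergeometric ODE~(\ref{2.2}); your packaging is a bit cleaner but the content is identical.
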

\begin{proof}
We shall prove (\ref{4.4}) by induction. It is easy to see that (\ref{4.4}) is valid for $k=0$. Now suppose that equation (\ref{4.4}) is valid for $k\geq0$.
Then we have
\begin{equation}\label{4.6}
\begin{split}
\Delta^{k+1} V_{m}(x)=&
4^{k}\sum^{\infty}_{l=0}\frac{\Gamma(m+1)\Gamma(l+\frac{n+1}{2}+m)}{\Gamma(l+\frac{n+1}{2})\Gamma(2m+1)}
(l+\frac{n-1}{2}-m)_{k}(-m)_{k}\cdot\\
&\Delta\left(F(l+\frac{n-1}{2}-m+k,-m+k,l+\frac{n+1}{2},r^{2})r^{l}Y_{l}\right).
\end{split}
\end{equation}
We compute
\begin{equation}\label{4.7}
\begin{split}
&\Delta\left(F(l+\frac{n-1}{2}-m+k,-m+k,l+\frac{n+1}{2},r^{2})r^{l}Y_{l}\right)\\
=&\left(\partial_{rr}+\frac{n}{r}\partial_{r}+\frac{1}{r^{2}}\widetilde{\Delta}\right)
\sum^{m-k}_{j=0}\frac{(l+\frac{n-1}{2}-m+k)_{j}(-m+k)_{j}}{(l+\frac{n+1}{2})_{j}}\frac{1}{j!}r^{2j+l}Y_{l}\\
=&\sum^{m-k}_{j=0}\frac{(l+\frac{n-1}{2}-m+k)_{j}(-m+k)_{j}}{(l+\frac{n+1}{2})_{j}}\frac{1}{j!}4j\left(\frac{n-1}{2}+j\right)r^{2j+l-2}Y_{l}\\
=&4(l+\frac{n-1}{2}-m+k)(-m+k)\cdot\\
&F(l+\frac{n-1}{2}-m+k+1,-m+k+1,l+\frac{n+1}{2},r^{2})r^{l}Y_{l}.
\end{split}
\end{equation}
Substituting (\ref{4.7}) into (\ref{4.6}), we have
\begin{equation*}
\begin{split}
\Delta^{k+1} V_{m}(x)=&
4^{k+1}\sum^{\infty}_{l=0}\frac{\Gamma(m+1)\Gamma(l+\frac{n+1}{2}+m)}{\Gamma(l+\frac{n+1}{2})\Gamma(2m+1)}
(l+\frac{n-1}{2}-m)_{k+1}(-m)_{k+1}\cdot\\
&F(l+\frac{n-1}{2}-m+k+1,-m+k+1,l+\frac{n+1}{2},r^{2})r^{l}Y_{l}.
\end{split}
\end{equation*}
This proves the Lemma \ref{lm4.1}.
\end{proof}

\begin{lemma}\label{lm4.2}
There holds, for $0\leq k\leq m$,
\begin{equation}\label{4.8}
\begin{split}
\Delta^{k} V_{m}|_{r=1}=&(-1)^{k}\frac{\Gamma(m+1)\Gamma(m-k+\frac{1}{2})}{\Gamma(m+\frac{1}{2})\Gamma(m-k+1)}\frac{\mathcal{P}_{2m+1}}{\mathcal{P}_{2m+1-2k}}f
\end{split}
\end{equation}
and for $0\leq k\leq m-1$,
\begin{equation}\label{4.9}
\begin{split}
\partial_{r}\Delta^{k} V_{m}|_{r=1}=&(-1)^{k+1}\frac{n-1-2m+2k}{2}
\frac{\Gamma(m+1)\Gamma(m-k+\frac{1}{2})}{\Gamma(m+\frac{1}{2})\Gamma(m-k+1)}\frac{\mathcal{P}_{2m+1}}{\mathcal{P}_{2m+1-2k}}f.
\end{split}
\end{equation}
In the case $k=m$, we have
\begin{equation}\label{4.10}
\begin{split}
\partial_{r}\Delta^{m} V_{m}|_{r=1}=&(-1)^{m}\frac{\Gamma(m+1)\Gamma(\frac{1}{2})}
{\Gamma(m+\frac{1}{2})}\left(\mathcal{P}_{2m+1}-\frac{n-1}{2}\frac{\mathcal{P}_{2m+1}}{\mathcal{P}_{1}}\right)f
\end{split}
\end{equation}
\end{lemma}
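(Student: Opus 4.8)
The plan is to pass to the limit $r\to1$ directly in the closed form for $\Delta^{k}V_{m}$ furnished by Lemma~\ref{lm4.1}, using the Gauss summation formula (\ref{2.3}) to evaluate the hypergeometric factors at the endpoint (and the differentiation formula (\ref{2.6}) followed again by (\ref{2.3}) for the normal derivative), and then recognising the resulting quotients of Gamma functions as the eigenvalues of the operators $\mathcal{P}_{2\gamma}$ supplied by (\ref{2.15}). Throughout, interchanging $\partial_{r}$ or the limit $r\to1$ with the sum over $l$ is justified by the smoothness of $f$ and the fact that for $0\leq k\leq m$ each hypergeometric factor is a polynomial of degree $m-k$ in $r^{2}$.

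\emph{Step 1: identity (\ref{4.8}).} In the generic term of (\ref{4.4}) the hypergeometric parameters $a=l+\frac{n-1}{2}-m+k$, $b=-m+k$, $c=l+\frac{n+1}{2}$ satisfy $c-a-b=1+2m-2k\geq1$ for $0\leq k\leq m$, so (\ref{2.3}) applies and gives $F(a,b;c;1)=\frac{\Gamma(l+\frac{n+1}{2})\Gamma(1+2m-2k)}{\Gamma(1+m-k)\Gamma(l+\frac{n+1}{2}+m-k)}$. Substituting this into (\ref{4.4}) at $r=1$, the factor $\Gamma(l+\frac{n+1}{2})$ cancels, one writes $(-m)_{k}=(-1)^{k}\Gamma(m+1)/\Gamma(m-k+1)$, and the surviving $l$-dependent product $(l+\frac{n-1}{2}-m)_{k}(l+\frac{n+1}{2}+m-k)_{k}$ is precisely the eigenvalue of $\mathcal{P}_{2m+1}/\mathcal{P}_{2m+1-2k}$ on $Y_{l}$ by (\ref{2.15}). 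Finally the $l$-independent constant collapses to $(-1)^{k}\frac{\Gamma(m+1)\Gamma(m-k+\frac{1}{2})}{\Gamma(m+\frac{1}{2})\Gamma(m-k+1)}$ after applying the duplication formula (\ref{3.16}) to $\Gamma(2m+1)$ and to $\Gamma(1+2m-2k)=\Gamma(2(m-k)+1)$, which yields (\ref{4.8}).

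\emph{Step 2: identity (\ref{4.9}) for $0\leq k\leq m-1$.} Differentiating the generic term $F(a,b;c;r^{2})r^{l}$ of (\ref{4.4}) gives at $r=1$ the value $2F'(a,b;c;1)+l\,F(a,b;c;1)$; by (\ref{2.6}) with one derivative $F'(a,b;c;1)=\frac{ab}{c}F(a+1,b+1;c+1;1)$, and since $(c+1)-(a+1)-(b+1)=2m-2k>0$ for $k\leq m-1$ the formula (\ref{2.3}) applies again. A short computation, using $\Gamma(1+2m-2k)=(2m-2k)\Gamma(2m-2k)$ together with $2ab+l(2m-2k)=-2(m-k)\bigl(\tfrac{n-1}{2}-m+k\bigr)$, shows that $\partial_{r}\bigl(F(a,b;c;r^{2})r^{l}\bigr)\big|_{r=1}=-\frac{n-1-2m+2k}{2}\,F(a,b;c;1)$, so termwise $\partial_{r}\Delta^{k}V_{m}|_{r=1}=-\frac{n-1-2m+2k}{2}\,\Delta^{k}V_{m}|_{r=1}$, and (\ref{4.9}) follows from (\ref{4.8}).

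\emph{Step 3: identity (\ref{4.10}), the delicate case $k=m$.} Here $b=-m+k=0$, so $F(a,0;c;r^{2})\equiv1$, $\Delta^{m}V_{m}$ is a sum of solid harmonics $r^{l}Y_{l}$, and the Gauss evaluation of $F'$ at $1$ is no longer available because $(c+1)-(a+1)-(b+1)=0$ sits on the boundary of convergence in (\ref{2.3}). Instead I would compute $\partial_{r}\Delta^{m}V_{m}|_{r=1}$ directly from (\ref{4.4}) as a sum of the form $\sum_{l}4^{m}c_{l,m}\,l\,Y_{l}$, identify the coefficient $4^{m}c_{l,m}$ from (\ref{4.8}) with $k=m$, and split $l=(l+\tfrac{n-1}{2})-\tfrac{n-1}{2}$; the two Gamma-quotients that then appear are exactly the eigenvalues of $\mathcal{P}_{2m+1}$ and of $\frac{n-1}{2}\,\mathcal{P}_{2m+1}/\mathcal{P}_{1}$ on $Y_{l}$, producing $(-1)^{m}\frac{\Gamma(m+1)\Gamma(\frac{1}{2})}{\Gamma(m+\frac{1}{2})}\bigl(\mathcal{P}_{2m+1}-\frac{n-1}{2}\frac{\mathcal{P}_{2m+1}}{\mathcal{P}_{1}}\bigr)f$. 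The main obstacle is precisely this borderline case $k=m$, where one cannot reduce to Step 2 and must argue by the special structure $b=0$; everything else is a matter of careful bookkeeping of Gamma factors via the duplication formula and of term-by-term matching with the $\mathcal{P}_{2\gamma}$-eigenvalues from (\ref{2.15}).
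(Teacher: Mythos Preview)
Your proposal is correct and follows essentially the same route as the paper: evaluate the hypergeometric factors in Lemma~\ref{lm4.1} at $r=1$ via (\ref{2.3}) (and (\ref{2.6}) for the radial derivative), simplify the constants with the duplication formula (\ref{3.16}), and match the $l$-dependent Gamma-quotients with the $\mathcal{P}_{2\gamma}$-eigenvalues from (\ref{2.15}); in the borderline case $k=m$ both you and the paper bypass the unavailable Gauss evaluation by differentiating the explicit sum of solid harmonics and splitting $l=(l+\tfrac{n-1}{2})-\tfrac{n-1}{2}$. Your bookkeeping in Step~2, reducing to $\partial_{r}\Delta^{k}V_{m}|_{r=1}=-\tfrac{n-1-2m+2k}{2}\,\Delta^{k}V_{m}|_{r=1}$, is exactly what the paper does.
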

\begin{proof}
By Lemma \ref{lm4.1} and (\ref{2.3}), we have, for $0\leq k\leq m$,
\begin{equation}\label{4.11}
\begin{split}
\Delta^{k} V_{m}|_{r=1}=&4^{k}\sum^{\infty}_{l=0}\frac{\Gamma(m+1)\Gamma(l+\frac{n+1}{2}+m)}{\Gamma(l+\frac{n+1}{2})\Gamma(2m+1)}
(l+\frac{n-1}{2}-m)_{k}(-m)_{k}\cdot\\
&\frac{\Gamma(l+\frac{n+1}{2})\Gamma(2m-2k+1)}{\Gamma(m+1-k)\Gamma(l+\frac{n+1}{2}+m-k)}Y_{l}\\
=&4^{k}\frac{\Gamma(m+1)\Gamma(2m-2k+1)}{\Gamma(2m+1)\Gamma(m-k+1)}(-m)_{k}\cdot\\
&\sum^{\infty}_{l=0}\frac{\Gamma(l+\frac{n+1}{2}+m)\Gamma(l+\frac{n-1}{2}-m+k)}{\Gamma(l+\frac{n-1}{2}-m)\Gamma(l+\frac{n+1}{2}+m-k)}Y_{l}\\
=&4^{k}\frac{\Gamma(m+1)\Gamma(2m-2k+1)}{\Gamma(2m+1)\Gamma(m-k+1)}(-m)_{k}\frac{\mathcal{P}_{2m+1}}{\mathcal{P}_{2m+1-2k}}f.
\end{split}
\end{equation}
To get the last equation, we use (\ref{2.15}). Moreover, by duplication formula (\ref{3.16}), we have
\begin{equation}\label{4.12}
\begin{split}
4^{k}\frac{\Gamma(m+1)\Gamma(2m-2k+1)}{\Gamma(2m+1)\Gamma(m-k+1)}(-m)_{k}=&(-1)^{k}\frac{\Gamma(m+1)\Gamma(m-k+\frac{1}{2})}{\Gamma(m+\frac{1}{2})\Gamma(m-k+1)}.
\end{split}
\end{equation}
Substituting (\ref{4.12}) into (\ref{4.11}), we get (\ref{4.8}).
Similarly, using  (\ref{2.6}) and (\ref{2.3}), we have,
for $0\leq k\leq m-1$,
\begin{equation}\label{4.11}
\begin{split}
\partial_{r}\Delta^{k} V_{m}|_{r=1}&=4^{k}\sum^{\infty}_{l=0}\frac{\Gamma(m+1)\Gamma(l+\frac{n+1}{2}+m)}{\Gamma(l+\frac{n+1}{2})\Gamma(2m+1)}
(l+\frac{n-1}{2}-m)_{k}(-m)_{k}\cdot\\
&\left(2\frac{(l+\frac{n-1}{2}-m+k)(-m+k)}{l+\frac{n+1}{2}}\frac{\Gamma(l+\frac{n+1}{2}+1)\Gamma(2m-2k)}{\Gamma(m+1-k)\Gamma(l+\frac{n+1}{2}+m-k)}
\right.\\
&+\left.l\frac{\Gamma(l+\frac{n+1}{2})\Gamma(2m-2k+1)}{\Gamma(m+1-k)\Gamma(l+\frac{n+1}{2}+m-k)}\right)Y_{l}\\
&=-\left(\frac{n-1}{2}-m+k\right)\Delta^{k} V_{m}|_{r=1}.
\end{split}
\end{equation}
These prove (\ref{4.9}).
Now we  prove (\ref{4.10}). By Lemma \ref{lm4.1},
\begin{equation*}
\begin{split}
\Delta^{m} V_{m}(x)=&4^{k}\frac{\Gamma(m+1)}{\Gamma(2m+1)}(-m)_{m}\sum^{\infty}_{l=0}
\frac{\Gamma(l+\frac{n+1}{2}+m)}{(l+\frac{n-1}{2})\Gamma(l+\frac{n-1}{2}-m)}r^{l}Y_{l}.
\end{split}
\end{equation*}
Therefore, by (\ref{2.15}), we have
\begin{equation}\label{4.13}
\begin{split}
\partial_{r}\Delta^{m} V_{m}|_{r=1}=&4^{m}\frac{\Gamma(m+1)}{\Gamma(2m+1)}(-m)_{m}\sum^{\infty}_{l=0}\frac{l}{(l+\frac{n-1}{2})}
\frac{\Gamma(l+\frac{n+1}{2}+m)}{\Gamma(l+\frac{n-1}{2}-m)}Y_{l}\\
=&4^{m}\frac{\Gamma(m+1)}{\Gamma(2m+1)}(-m)_{m}\sum^{\infty}_{l=0}\left(1-\frac{\frac{n-1}{2}}{l+\frac{n-1}{2}}\right)\cdot\\
&\frac{\Gamma(l+\frac{n+1}{2}+m)}{\Gamma(l+\frac{n-1}{2}-m)}Y_{l}\\
=&(-1)^{m}4^{m}\frac{\Gamma(m+1)^{2}}{\Gamma(2m+1)}\left(\mathcal{P}_{2m+1}-\frac{n-1}{2}\frac{\mathcal{P}_{2m+1}}{\mathcal{P}_{1}}\right)f\\
=&(-1)^{m}\frac{\Gamma(m+1)\Gamma(\frac{1}{2})}{\Gamma(m+\frac{1}{2})}\left(\mathcal{P}_{2m+1}-\frac{n-1}{2}\frac{\mathcal{P}_{2m+1}}{\mathcal{P}_{1}}\right)f.
\end{split}
\end{equation}
To get the last equation, we use duplication formula (\ref{3.16}). The desired result follows.
\end{proof}
\

\textbf{Proof of Theorem \ref{th1.6}}
We firstly prove (\ref{1.18}) when $v=V_{m}$. By Lemma \ref{lm4.2}
and  Green's formula (see e.g. \cite{evans}, Appendix C), we have
\begin{equation}\label{4.15}
\begin{split}
0=&\int_{\mathbb{B}^{n+1}}V_{m}\Delta^{m+1}V_{m}dx\\
=&\int_{\mathbb{B}^{n+1}}\Delta V_{m}\Delta^{m}V_{m}dx+\int_{\mathbb{S}^{n}}V_{m}\partial_{r}\Delta^{m}V_{m}d\sigma-
\int_{\mathbb{S}^{n}}\partial_{r}V_{m}\Delta^{m}V_{m}d\sigma\\
=&\int_{\mathbb{B}^{n+1}}\Delta V_{m}\Delta^{m}V_{m}dx+(-1)^{m}\frac{\Gamma(m+1)\Gamma(\frac{1}{2})}{\Gamma(m+\frac{1}{2})}
\int_{\mathbb{S}^{n}}
f\mathcal{P}_{2m+1}fd\sigma-\\
&(-1)^{m}\frac{n-1}{2}\frac{\Gamma(m+1)\Gamma(\frac{1}{2})}{\Gamma(m+\frac{1}{2})}\int_{\mathbb{S}^{n}}
f\frac{\mathcal{P}_{2m+1}}{\mathcal{P}_{1}}fd\sigma.
\end{split}
\end{equation}
If $m$ is an odd integer, then by Lemma \ref{lm4.2} and
and  Green's formula, we have
\begin{equation}\label{4.16}
\begin{split}
\int_{\mathbb{B}^{n+1}}\Delta V_{m}\Delta^{m}V_{m}dx=&\int_{\mathbb{B}^{n+1}}|\Delta^{\frac{m+1}{2}}V_{m}|^{2}dx
+\sum^{\frac{m-1}{2}}_{k=1}(m-2k)
\frac{\Gamma(m+1)^{2}}{\Gamma(m+\frac{1}{2})^{2}}\cdot\\
&\frac{\Gamma(k+\frac{1}{2})\Gamma(m-k+\frac{1}{2})}{\Gamma(k+1)\Gamma(m-k+1)}
\int_{\mathbb{S}^{n}}
f\frac{\mathcal{P}_{2m+1}^{2}}{\mathcal{P}_{2m+1-2k}\mathcal{P}_{2k+1}}fd\sigma.
\end{split}
\end{equation}
If $m$ is a even integer, then also by Lemma \ref{lm4.2} and
and  Green's formula, we have
\begin{equation}\label{4.17}
\begin{split}
\int_{\mathbb{B}^{n+1}}\Delta V_{m}\Delta^{m}V_{m}dx=&-\int_{\mathbb{B}^{n+1}}|\nabla\Delta^{\frac{m}{2}}V_{m}|^{2}dx
-\sum^{\frac{m-2}{2}}_{k=1}(m-2k)
\frac{\Gamma(m+1)^{2}}{\Gamma(m+\frac{1}{2})^{2}}\cdot\\
&\frac{\Gamma(k+\frac{1}{2})\Gamma(m-k+\frac{1}{2})}{\Gamma(k+1)\Gamma(m-k+1)}
\int_{\mathbb{S}^{n}}
f\frac{\mathcal{P}_{2m+1}^{2}}{\mathcal{P}_{2m+1-2k}\mathcal{P}_{2k+1}}fd\sigma-\\
&\frac{n-1-m}{2}\left(\frac{\Gamma(m+1)\Gamma(\frac{m+1}{2})}{\Gamma(m+\frac{1}{2})\Gamma(\frac{m}{2}+1)}\right)^{2}\int_{\mathbb{S}^{n}}
f\frac{\mathcal{P}_{2m+1}^{2}}{\mathcal{P}_{m+1}^{2}}fd\sigma.
\end{split}
\end{equation}
Substituting (\ref{4.16}) and (\ref{4.17}) into (\ref{4.15}), we have
\begin{equation}\label{4.18}
\begin{split}
\frac{\Gamma(m+1)\Gamma(\frac{1}{2})}{\Gamma(m+\frac{1}{2})}
\int_{\mathbb{S}^{n}}
f\mathcal{P}_{2m+1}fd\sigma=\int_{\mathbb{B}^{n+1}}|\nabla^{m+1}V_{m}|^{2}dx+
\int_{\mathbb{S}^{n}}
f \mathcal{T}_{m}fd\sigma,
\end{split}
\end{equation}
where $\mathcal{T}_{m}$ is defined in Theorem (\ref{th1.6}). Therefore, by Theorem \ref{be1}, we prove the Theorem \ref{1.6}
when $v=V_{m}$ and in this case the only extremal functions is given by (\ref{b1.19}). For general $v$ with  the Neumann boundary condition (\ref{1.17}), we claim
\begin{equation*}
\begin{split}
\int_{\mathbb{B}^{n+1}}|\nabla^{m+1}V_{m}|^{2}dx\leq \int_{\mathbb{B}^{n+1}}|\nabla^{m+1}v|^{2}dx.
\end{split}
\end{equation*}
In fact, we have
\begin{equation*}
\begin{split}
0\leq&\int_{\mathbb{B}^{n+1}}|\nabla^{m+1}(v-U_{m})|^{2}dx\\
=&\int_{\mathbb{B}^{n+1}}|\nabla^{m+1}v|^{2}dx-\int_{\mathbb{B}^{n+1}}|\nabla^{m+1}U_{m}|^{2}dx\\
&-2\int_{\mathbb{B}^{n+1}}\nabla^{m+1}(v-U_{m})\cdot\nabla^{m+1}U_{m}dx.
\end{split}
\end{equation*}
Since $v$ and $U_{m}$ have the same Neumann boundary condition (\ref{1.17}), we have
$$\int_{\mathbb{B}^{n+1}}\nabla^{m+1}(v-U_{m})\cdot\nabla^{m+1}U_{m}dx
=(-1)^{m+1}\int_{\mathbb{B}^{n+1}}(v-U_{m})\Delta^{m+1}U_{m}dx=0.$$
Therefore,
\begin{equation}\label{b4.19}
\begin{split}
\int_{\mathbb{B}^{n+1}}|\nabla^{m+1}v|^{2}dx=& \int_{\mathbb{B}^{n+1}}|\nabla^{m+1}(v-U_{m})|^{2}dx+\int_{\mathbb{B}^{n+1}}|\nabla^{m+1}U_{m}|^{2}dx\\
\geq&\int_{\mathbb{B}^{n+1}}|\nabla^{m+1}U_{m}|^{2}dx.
\end{split}
\end{equation}
These prove the claim.

Finally, we prove the uniqueness of the extremal functions. If $v$ is any extremal function with the  Neumann boundary condition (\ref{1.17}), then by (\ref{b4.19})
it must satisfies $\nabla^{m+1}(v-U_{m})=0$ and thus 
$\Delta^{m+1}v=\Delta^{m+1}U_{m}=0.$  By the uniqueness of the solution, we get $v=V_{m}(x)$.
Thus, by Theorem \ref{be1},  the only extremal function is that given by (\ref{b1.19}).
The proof of Theorem \ref{th1.6}
is thereby completed.\
\\
\
\

\textbf{Proof of Theorem \ref{th1.7}} With the same argument in the proof of Theorem \ref{th1.6}, we need only consider the case $v=V_{m}(x)$
with $m=\frac{n-1}{2}$. Using (\ref{4.15})-(\ref{4.18}), we get
\begin{equation}\label{4.20}
\begin{split}
\frac{\Gamma(\frac{n+1}{2})\Gamma(\frac{1}{2})}{\Gamma(\frac{n}{2})}
\int_{\mathbb{S}^{n}}
f\mathcal{P}_{n}fd\sigma=\int_{\mathbb{B}^{n+1}}|\nabla^{\frac{n+1}{2}}V_{m}|^{2}dx+
\int_{\mathbb{S}^{n}}
f \mathcal{T}_{\frac{n-1}{2}}fd\sigma.
\end{split}
\end{equation}
Therefore, by Theorem \ref{be1}, we have
\begin{equation}\label{4.21}
\begin{split}
&\ln\left(
\frac{1}{\omega_{n}}\int_{\mathbb{S}^{n}}e^{n(f-\overline{f})}d\sigma\right)\\
\leq&\frac{n}{2(n-1)!\omega_{n}}
\int_{\mathbb{S}^{n}}f \mathcal{P}_{n}fd\sigma\\
\leq&\frac{n}{2(n-1)!\omega_{n}}\frac{\Gamma(\frac{n}{2})}
{\Gamma(\frac{n+1}{2})\Gamma(\frac{1}{2})}\left(\int_{\mathbb{B}^{n+1}}|\nabla^{\frac{n+1}{2}}V_{m}|^{2}dx+
\int_{\mathbb{S}^{n}}
f \mathcal{T}_{\frac{n-1}{2}}fd\sigma\right)\\
=&\frac{n}{2^{n+1}\pi^{\frac{n+1}{2}}\Gamma(\frac{n+1}{2})}
\left(\int_{\mathbb{B}^{n+1}}|\nabla^{\frac{n+1}{2}} V_{m}|^{2}dx+\int_{\mathbb{S}^{n}}f\mathcal{T}_{\frac{n-1}{2}}fd\sigma\right).
\end{split}
\end{equation}
To get the last equation, we use the fact $\omega_{n}=\frac{2\pi^{\frac{n+1}{2}}}{\Gamma(\frac{n+1}{2})}=\frac{2^{n}\Gamma(\frac{n}{2})\pi^{\frac{n}{2}}}{\Gamma(n)}$.

With the same argument in Theorem \ref{th1.6} and using Theorem \ref{be1}, we have get the only 
only extremal function is that given by (\ref{b1.20}). The proof of Theorem \ref{th1.7} is thereby completed.

\section{Proof of Theorem \ref{th1.8}}
Since the M\"obius transform
$\mathcal{M}: (\mathbb{R}^{n+1}_{+},g_{\mathbb{H}})\rightarrow(\mathbb{B}^{n+1}, g_{\mathbb{B}})$, where $g_{\mathbb{H}}=\frac{|dx|^{2}+|dy|^{2}}{y^{2}}$, defined by
$$\mathcal{M}(x,y)=\left(\frac{2x}{(1+y)^{2}+|x|^{2}},
\frac{1-|x|^{2}-y^{2}}{(1+y)^{2}+|x|^{2}}\right),$$
is an isometry between the two models of hyperbolic space, by Theorem \ref{th3.1}, we can solve the Poisson equation (\ref{3.1}) on $(\mathbb{R}^{n+1}_{+},g_{\mathbb{H}})$:
\begin{theorem}\label{th5.1}
Let $\gamma\in (0,\frac{n}{2})$ and $s=\frac{n}{2}+\gamma$. The solution of the Poisson equation on the hyperbolic space $(\mathbb{R}^{n+1}_{+},g_{\mathbb{H}})$
\begin{equation}\label{5.1}
\begin{split}
\left\{
  \begin{array}{ll}
    -\Delta_{g_{\mathbb{H}}}u-s(n-s)u=0 & \hbox{in $\mathbb{R}^{n+1}$,} \\
    u=Fy^{n-s}+Hy^{s}, & \hbox{} \\
 F|_{\partial \mathbb{R}_{+}^{n+1}}=f(x), & \hbox{.}
  \end{array}
\right.
\end{split}
\end{equation}
is
\begin{equation}\label{5.2}
\begin{split}
u(x,y)=\pi^{-\frac{n}{2}}
\frac{\Gamma(\frac{n}{2}+\gamma)}{\Gamma(\gamma)}
\int_{\mathbb{R}^{n}}\left(\frac{y}{|x-\xi|^{2}+y^{2}}\right)^{s}f(\xi)d\xi,
\end{split}
\end{equation}
where $f$ and its derivatives have fast decay at infinity (for example, in certain fractional Sobolev spaces).
\end{theorem}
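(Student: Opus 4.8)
The plan is to transport the representation formula of Theorem \ref{th3.1} from the ball model to the upper half-space model via the isometry $\mathcal{M}$, keeping careful track of the conformal factor. First I would record the two elementary identities that make the transfer clean. Writing $w=\mathcal{M}(x,y)$ and $D=(1+y)^{2}+|x|^{2}$, a direct computation gives $1-|w|^{2}=\frac{4y}{D}$, so the ball defining function $\rho=\frac{1-|x|^{2}}{2}$ pulls back to $\rho\circ\mathcal{M}=y\lambda$, where $\lambda(x,y)=\frac{2}{D}$ is the conformal factor of $\mathcal{M}$, i.e.\ $\mathcal{M}^{\ast}(|dw|^{2})=\lambda^{2}\,(|dx|^{2}+dy^{2})$; in particular $\lambda$ is smooth and positive up to $\{y=0\}$, with $\lambda(\eta,0)=\frac{2}{1+|\eta|^{2}}$. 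Since $\mathcal{M}$ is a M\"obius transformation it satisfies $|\mathcal{M}(p)-\mathcal{M}(q)|^{2}=\lambda(p)\lambda(q)\,|p-q|^{2}$, and taking $q=(\eta,0)$ (so that $\mathcal{M}(\eta,0)$ is the stereographic image of $\eta$) yields $|w-\mathcal{M}(\eta,0)|^{2}=\lambda(x,y)\,\lambda(\eta,0)\,(|x-\eta|^{2}+y^{2})$; moreover $d\sigma(\xi)=\lambda(\eta,0)^{n}\,d\eta$ for the round measure under stereographic projection.

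Next I would substitute these identities into the Poisson kernel of Theorem \ref{th3.1}, obtaining
\begin{equation*}
\left(\frac{1-|w|^{2}}{2\,|w-\mathcal{M}(\eta,0)|^{2}}\right)^{s}
=\lambda(\eta,0)^{-s}\left(\frac{y}{|x-\eta|^{2}+y^{2}}\right)^{s}.
\end{equation*}
Hence if $\widetilde{f}$ is a function on $\mathbb{S}^{n}$ and $u_{\mathbb{B}}$ is the solution of (\ref{a3.1}) with boundary datum $\widetilde{f}$ furnished by Theorem \ref{th3.1}, then setting $u=u_{\mathbb{B}}\circ\mathcal{M}$ and $f(\eta)=\lambda(\eta,0)^{n-s}\,\widetilde{f}(\mathcal{M}(\eta,0))$, the change of variables $\xi=\mathcal{M}(\eta,0)$ turns the formula for $u_{\mathbb{B}}$ into exactly the right-hand side of (\ref{5.2}), because the surplus weights collapse as $\lambda(\eta,0)^{-s}\lambda(\eta,0)^{n}=\lambda(\eta,0)^{n-s}$. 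Conversely, given $f$ on $\mathbb{R}^{n}$ with the stated decay, the prescription $\widetilde{f}(\mathcal{M}(\eta,0))=\lambda(\eta,0)^{s-n}f(\eta)$ defines an admissible (e.g.\ smooth) datum on $\mathbb{S}^{n}$, so Theorem \ref{th3.1} applies and the integral in (\ref{5.2}) converges.

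Finally I would verify the boundary expansion and uniqueness. As $\mathcal{M}$ is an isometry, $-\Delta_{g_{\mathbb{H}}}(u_{\mathbb{B}}\circ\mathcal{M})=\big((-\Delta_{g_{\mathbb{B}}})u_{\mathbb{B}}\big)\circ\mathcal{M}$, so $u=u_{\mathbb{B}}\circ\mathcal{M}$ solves $-\Delta_{g_{\mathbb{H}}}u-s(n-s)u=0$. Writing $u_{\mathbb{B}}=F\rho^{n-s}+H\rho^{s}$ with $F,H$ smooth up to $\partial\mathbb{B}^{n+1}$ and $F|_{\rho=0}=\widetilde{f}$, and using $\rho\circ\mathcal{M}=y\lambda$, I get
\begin{equation*}
u=\big[(F\circ\mathcal{M})\lambda^{n-s}\big]\,y^{n-s}+\big[(H\circ\mathcal{M})\lambda^{s}\big]\,y^{s},
\end{equation*}
which is the decomposition $u=Fy^{n-s}+Hy^{s}$ demanded in (\ref{5.1}), and whose leading coefficient at $y=0$ equals $\widetilde{f}(\mathcal{M}(x,0))\,\lambda(x,0)^{n-s}=f(x)$. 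Uniqueness is inherited from that in Theorem \ref{th3.1}, equivalently from the fact that $s(n-s)=\tfrac{n^{2}}{4}-\gamma^{2}$ lies strictly below the bottom $\tfrac{n^{2}}{4}$ of the spectrum of $-\Delta_{g_{\mathbb{H}}}$, which has no point spectrum. I do not anticipate a genuine obstacle here: the content is the two M\"obius identities and the bookkeeping of powers of $\lambda$; the only point needing a little care is to match the decay hypothesis on $f$ precisely with what is required for $\widetilde{f}$ to be admissible in Theorem \ref{th3.1} and for (\ref{5.2}) to have the claimed polyhomogeneous behavior as $y\to 0$.
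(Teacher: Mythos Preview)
Your proposal is correct and follows exactly the route the paper takes: the paper simply asserts that Theorem~\ref{th5.1} follows from Theorem~\ref{th3.1} via the isometry $\mathcal{M}:(\mathbb{R}^{n+1}_{+},g_{\mathbb{H}})\to(\mathbb{B}^{n+1},g_{\mathbb{B}})$, without writing out any of the computations. You have supplied precisely the details the paper omits --- the identities $1-|\mathcal{M}(x,y)|^{2}=4y/D$, the M\"obius distance relation, the Jacobian of stereographic projection, and the bookkeeping of the powers of $\lambda$ --- so there is nothing to add.
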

Now we let $\gamma=m+\frac{1}{2}$ with $m$ an integer. In this case, we can find out the relationship between  the kernel in (\ref{5.2})
and the Poisson kernel. In fact, we have the following:
\begin{lemma}\label{lm5.2} There holds, for $(x,y)\in\mathbb{R}^{n+1}_{+}$ and $m\in\mathbb{N}$,
\begin{equation}\label{5.3}
\begin{split}
&\left(\sum^{m}_{k=0}\frac{2^{k}}{k!}\frac{\Gamma(2m-k+1)}{\Gamma(m-k+1)}(-y)^{k}\frac{d^{k}}{dy^{k}}\right)
\frac{y}{(|x|^{2}+y^{2})^{\frac{n+1}{2}}}\\
=&2^{2m}\frac{\Gamma(m+\frac{n+1}{2})}{\Gamma(\frac{n+1}{2})}
\frac{y^{1+2m}}{(|x|^{2}+y^{2})^{\frac{n+1}{2}+m}}.
\end{split}
\end{equation}
\end{lemma}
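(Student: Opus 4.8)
The plan is to prove this calculus identity by induction on $m$, after recasting it as a first-order recursion in $m$ for each of the three objects that occur in it. Write $r=|x|$, $\alpha=\tfrac{n+1}{2}$, and abbreviate
\[
L_{m}:=\sum_{k=0}^{m}\frac{2^{k}}{k!}\frac{\Gamma(2m-k+1)}{\Gamma(m-k+1)}(-y)^{k}\frac{d^{k}}{dy^{k}},\qquad P:=\frac{y}{(r^{2}+y^{2})^{\alpha}},\qquad G_{m}:=\frac{y^{1+2m}}{(r^{2}+y^{2})^{\alpha+m}},
\]
and $c_{m}:=2^{2m}\frac{\Gamma(m+\alpha)}{\Gamma(\alpha)}$; the assertion of the lemma is then exactly $L_{m}P=c_{m}G_{m}$ (here $P$ is, up to a constant, the Poisson kernel of $\mathbb{R}_{+}^{n+1}$ and $G_{m}$ the generalized Poisson kernel). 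The base case $m=0$ is trivial, since $L_{0}=\mathrm{Id}$, $c_{0}=1$ and $G_{0}=P$.

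The first ingredient I would establish is the operator identity $L_{m+1}=2\bigl((2m+1)-y\,\partial_{y}\bigr)\circ L_{m}$. Since $y^{k}\tfrac{d^{k}}{dy^{k}}=(y\partial_{y})(y\partial_{y}-1)\cdots(y\partial_{y}-k+1)$, one has the commutation rule $y\partial_{y}\cdot(-y)^{k}\tfrac{d^{k}}{dy^{k}}=-(-y)^{k+1}\tfrac{d^{k+1}}{dy^{k+1}}+k\,(-y)^{k}\tfrac{d^{k}}{dy^{k}}$; substituting this into the right-hand side and collecting the coefficient of each $(-y)^{k}\tfrac{d^{k}}{dy^{k}}$ reduces the identity to the elementary relation $a_{m+1,k}=2(2m+1-k)\,a_{m,k}+2\,a_{m,k-1}$ among the coefficients $a_{m,k}=\tfrac{2^{k}}{k!}\tfrac{\Gamma(2m-k+1)}{\Gamma(m-k+1)}$, which follows at once by clearing factorials. (Iterating this recursion also yields the closed form $L_{m}=2^{m}\prod_{j=1}^{m}\bigl((2j-1)-y\partial_{y}\bigr)$, from which the statement could alternatively be read off.)

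The second ingredient is the matching recursion for the right-hand side: a direct differentiation gives $y\partial_{y}G_{m}=(1+2m)G_{m}-2(\alpha+m)G_{m+1}$, hence
\[
\bigl((2m+1)-y\partial_{y}\bigr)G_{m}=2(\alpha+m)\,G_{m+1}=(n+1+2m)\,G_{m+1},
\]
while trivially $c_{m+1}=4(\alpha+m)\,c_{m}=2(n+1+2m)\,c_{m}$. Putting the three recursions together, the inductive step is immediate:
\[
L_{m+1}P=2\bigl((2m+1)-y\partial_{y}\bigr)L_{m}P=2c_{m}\bigl((2m+1)-y\partial_{y}\bigr)G_{m}=2c_{m}(n+1+2m)G_{m+1}=c_{m+1}G_{m+1}.
\]
I do not anticipate a genuine obstacle: the whole statement is a finite calculus identity, and the only step that requires some care is the bookkeeping of the index shift in the operator recursion $L_{m+1}=2((2m+1)-y\partial_{y})L_{m}$, which is purely mechanical.
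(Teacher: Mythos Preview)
Your proof is correct and follows essentially the same approach as the paper: both argue by induction on $m$, the key ingredients being the operator recursion $L_{m+1}=2\bigl((2m+1)-y\partial_{y}\bigr)L_{m}$ (the paper's equation (5.6)) and the matching recursion $\bigl((2m+1)-y\partial_{y}\bigr)G_{m}=(n+1+2m)G_{m+1}$ (the paper's equations (5.4)--(5.5)). Your presentation is somewhat cleaner in that you isolate the coefficient identity $a_{m+1,k}=2(2m+1-k)a_{m,k}+2a_{m,k-1}$ and note the factored form $L_{m}=2^{m}\prod_{j=1}^{m}\bigl((2j-1)-y\partial_{y}\bigr)$, but the substance is the same.
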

\begin{proof}
We shall prove it by induction. It is easy to see (\ref{5.3}) is valid for $m=0$.
Suppose (\ref{5.3}) is valid for $m\geq0$. We compute
\begin{equation}\label{5.4}
\begin{split}
y\frac{d}{dy}
\left(\frac{y^{1+2m}}{(|x|^{2}+y^{2})^{\frac{n+1}{2}+m}}\right)=&(1+2m)\frac{y^{1+2m}}{(|x|^{2}+y^{2})^{\frac{n+1}{2}+m}}-\\
&(n+1+2m)
\frac{y^{3+2m}}{(|x|^{2}+y^{2})^{\frac{n+1}{2}+m+1}}.
\end{split}
\end{equation}
Therefore,
\begin{equation}\label{5.5}
\begin{split}
&(n+1+2m)\frac{y^{3+2m}}{(|x|^{2}+y^{2})^{\frac{n+1}{2}+m+1}}
=\left(1+2m-y\frac{d}{dy}\right)
\frac{y^{1+2m}}{(|x|^{2}+y^{2})^{\frac{n+1}{2}+m}}\\
=&2^{-2m}\frac{\Gamma(\frac{n+1}{2})}
{\Gamma(m+\frac{n+1}{2})}\cdot\\&
\left(1+2m-y\frac{d}{dy}\right)
\left(\sum^{m}_{k=0}\frac{2^{k}}{k!}\frac{\Gamma(2m-k+1)}{\Gamma(m-k+1)}(-y)^{k}\frac{d^{k}}{dy^{k}}\right)
\frac{y}{(|x|^{2}+y^{2})^{\frac{n+1}{2}}}.
\end{split}
\end{equation}
Since
\begin{equation}\label{5.6}
\begin{split}
&\left(1+2m-y\frac{d}{dy}\right)
\left(\sum^{m}_{k=0}\frac{2^{k}}{k!}\frac{\Gamma(2m-k+1)}{\Gamma(m-k+1)}(-y)^{k}\frac{d^{k}}{dy^{k}}\right)\\
=&\sum^{m}_{k=0}\frac{2^{k}}{k!}\frac{\Gamma(2m-k+1)}{\Gamma(m-k+1)}(1+2m-k)(-y)^{k}\frac{d^{k}}{dy^{k}}+
\\
&\sum^{m}_{k=0}\frac{2^{k}}{k!}\frac{\Gamma(2m-k+1)}{\Gamma(m-k+1)}(-y)^{k+1}\frac{d^{k+1}}{dy^{k+1}}\\
=&\sum^{m}_{k=0}\frac{2^{k}}{k!}\frac{\Gamma(2m-k+1)}{\Gamma(m-k+1)}(1+2m-k)(-y)^{k}\frac{d^{k}}{dy^{k}}+
\\
&\sum^{m+1}_{k=1}\frac{2^{k-1}}{(k-1)!}\frac{\Gamma(2m-k+2)}{\Gamma(m-k+2)}(-y)^{k}\frac{d^{k}}{dy^{k}}\\
=&\frac{1}{2}\left(\sum^{m+1}_{k=0}\frac{2^{k}}{k!}\frac{\Gamma(2m-k+1)}{\Gamma(m-k+1)}(-y)^{k}\frac{d^{k}}{dy^{k}}\right)
\frac{y}{(|x|^{2}+y^{2})^{\frac{n+1}{2}}},
\end{split}
\end{equation}
We have, by (\ref{5.5}) and (\ref{5.6}),
\begin{equation*}
\begin{split}
&\left(\sum^{m+1}_{k=0}\frac{2^{k}}{k!}\frac{\Gamma(2m-k+3)}{\Gamma(m-k+2)}(-y)^{k}\frac{d^{k}}{dy^{k}}\right)
\frac{y}{(|x|^{2}+y^{2})^{\frac{n+1}{2}}}\\
=&2^{2m+2}\frac{\Gamma(m+1+\frac{n+1}{2})}{\Gamma(\frac{n+1}{2})}
\frac{y^{3+2m}}{(|x|^{2}+y^{2})^{\frac{n+1}{2}+m+1}}.
\end{split}
\end{equation*}
These completes the proof of Lemma \ref{lm5.2}.
\end{proof}

In the rest paper, we let $\Delta_{x}=\sum\limits^{n}_{i=1}\partial_{x_{i}x_{i}}$ and $\Delta=\Delta_{x}+\partial_{yy}$.
Since the Poisson kernel $e^{-y\sqrt{-\Delta_{x}}}$ on $\mathbb{R}_{+}^{n+1}$ is given by (see e.g. \cite{sw})
\begin{equation}\label{5.7}
\begin{split}
e^{-y\sqrt{-\Delta_{x}}}=\frac{\pi^{-\frac{n}{2}}\Gamma(\frac{n+1}{2})}{\Gamma(\frac{1}{2})}\frac{y}{(|x|^{2}+y^{2})^{\frac{n+1}{2}}},
\end{split}
\end{equation}
we have, by Theorem \ref{th5.1} and Lemma \ref{lm5.2}, that the solution of (\ref{5.1}) is
\begin{equation}\label{5.8}
\begin{split}
u(x,y)=&y^{\frac{n-1}{2}-m}\frac{2^{-2m}\Gamma(\frac{1}{2})}{\Gamma(m+\frac{1}{2})}\left(\sum^{m}_{k=0}\frac{2^{k}}{k!}
\frac{\Gamma(2m-k+1)}{\Gamma(m-k+1)}(-y)^{k}\frac{d^{k}}{dy^{k}}\right)e^{-y\sqrt{-\Delta_{x}}}f\\
=&y^{\frac{n-1}{2}-m}\frac{2^{-2m}\Gamma(\frac{1}{2})}{\Gamma(m+\frac{1}{2})}\sum^{m}_{k=0}\frac{2^{k}}{k!}
\frac{\Gamma(2m-k+1)}{\Gamma(m-k+1)}y^{k}(-\Delta_{x})^{\frac{k}{2}}e^{-y\sqrt{-\Delta_{x}}}f.
\end{split}
\end{equation}

\begin{lemma}\label{lm5.3}
Let $U_{m}(x,y)=u(x,y)y^{-\frac{n-1}{2}+m}$, where $u(x,y)$ is  given by (\ref{5.8}). Then for $0\leq k\leq m+1$, we have
\begin{equation}\label{5.9}
\begin{split}
\Delta^{k}U_{m}(x,y)=&(-1)^{k}\frac{2^{2k-2m}\Gamma(m+1)\Gamma(\frac{1}{2})}{\Gamma(m-k+1)\Gamma(m+\frac{1}{2})}\cdot\\
&\sum^{m-k}_{j=0}\frac{2^{j}}{j!}\frac{\Gamma(2m-2k-j+1)}{\Gamma(m-k-j+1)}
 y^{j}(-\Delta_{x})^{k+\frac{j}{2}}e^{-y\sqrt{-\Delta_{x}}}f.
\end{split}
\end{equation}
In particular,  $\Delta^{m+1}u(x,y)=0$ because of $(-m)_{m+1}=0$. Moreover,
\begin{equation}\label{5.10}
\begin{split}
\Delta^{k}U_{m}(x,y)|_{y=0}=&\frac{\Gamma(m+1)\Gamma(m+\frac{1}{2}-k)}{\Gamma(m-k-1)\Gamma(m+\frac{1}{2})}\Delta^{k}_{x}f,
\;\;\;0\leq k\leq m;\\
\partial_{y}\Delta^{k}U_{m}(x,y)|_{y=0}=&0,\;\;\;\;\;\;\;\;\;\;\;\;\;\;\;\;\;\;\;\;\;\;\;\;\;\;\;\;\;\;\;\;\;\;\;\;\;\;
\;\;\;\;\;\;\;\;0\leq k\leq m-1;\\
\partial_{y}\Delta^{m}U_{m}(x,y)|_{y=0}=&(-1)^{m}\frac{\Gamma(m+1)\Gamma(\frac{1}{2})}{\Gamma(m+\frac{1}{2})}(-\Delta_{x})^{m+\frac{1}{2}}f.
\end{split}
\end{equation}
\end{lemma}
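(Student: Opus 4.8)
The plan is to establish the closed formula \eqref{5.9} for $\Delta^{k}U_{m}$ by induction on $k$, starting from the explicit expression \eqref{5.8} for $u(x,y)$ and hence for $U_{m}=u\,y^{-\frac{n-1}{2}+m}$, namely
\begin{equation*}
U_{m}(x,y)=\frac{2^{-2m}\Gamma(\tfrac12)}{\Gamma(m+\tfrac12)}\sum^{m}_{j=0}\frac{2^{j}}{j!}\frac{\Gamma(2m-j+1)}{\Gamma(m-j+1)}\,y^{j}(-\Delta_{x})^{\frac{j}{2}}e^{-y\sqrt{-\Delta_{x}}}f.
\end{equation*}
The $k=0$ case is then exactly this identity (with the convention in the statement that the sum runs to $m-k$). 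For the inductive step I would use that each building block $w_{j}:=y^{j}(-\Delta_{x})^{j/2}e^{-y\sqrt{-\Delta_{x}}}f$ is, up to a power of $y$, a harmonic function of $(x,y)$ composed with the Poisson semigroup, so that $\Delta$ acts on the family $\{w_{j}\}$ in a triangular way. Concretely, since $e^{-y\sqrt{-\Delta_{x}}}f$ is $\Delta$-harmonic and $\Delta(y^{j}g)=j(j-1)y^{j-2}g+2jy^{j-1}\partial_{y}g+y^{j}\Delta g$, and $\partial_{y}e^{-y\sqrt{-\Delta_{x}}}=-\sqrt{-\Delta_{x}}\,e^{-y\sqrt{-\Delta_{x}}}$, one gets a two-term recursion expressing $\Delta w_{j}$ as a combination of $w_{j-2}$ and $w_{j-1}$ (after relabelling the $(-\Delta_{x})$ powers). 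Feeding this into the sum and matching Gamma-function coefficients — exactly the kind of manipulation already carried out in the proof of Lemma \ref{lm4.1} and Lemma \ref{lm5.2} — should reproduce \eqref{5.9} with $k$ replaced by $k+1$, the coefficient $(-1)^{k}2^{2k-2m}\Gamma(m+1)/[\Gamma(m-k+1)\Gamma(m+\tfrac12)]$ transforming into the one with index $k+1$ precisely because a factor $-(m-k)\cdot 4$ is produced. The vanishing $\Delta^{m+1}U_{m}=0$ is then immediate since the $k=m+1$ sum is empty (equivalently, the Pochhammer factor $(-m)_{m+1}=0$).

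Next I would read off the boundary values \eqref{5.10} by setting $y=0$ in \eqref{5.9}: only the $j=0$ term survives at $y=0$, giving
\begin{equation*}
\Delta^{k}U_{m}(x,y)|_{y=0}=(-1)^{k}\frac{2^{2k-2m}\Gamma(m+1)\Gamma(\tfrac12)}{\Gamma(m-k+1)\Gamma(m+\tfrac12)}\cdot\frac{\Gamma(2m-2k+1)}{\Gamma(m-k+1)}(-\Delta_{x})^{k}f,
\end{equation*}
and then one applies the duplication formula \eqref{3.16} to the ratio $2^{2k-2m}\Gamma(2m-2k+1)/\Gamma(m-k+1)$ to convert it into $\Gamma(m+\tfrac12-k)/\Gamma(\tfrac12)$ up to sign, so that $(-1)^{k}(-\Delta_{x})^{k}=\Delta_{x}^{k}$ and the stated coefficient $\Gamma(m+1)\Gamma(m+\tfrac12-k)/[\Gamma(m-k-1)\Gamma(m+\tfrac12)]$ emerges (I should double-check the $\Gamma(m-k-1)$ versus $\Gamma(m-k+1)$ appearing in the statement and reconcile it against \eqref{1.21}). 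For the Neumann data, I would differentiate \eqref{5.9} in $y$: the terms with $j\geq 2$ contribute a factor $y^{j-1}$ and vanish at $y=0$; the $j=1$ term differentiates to a constant in $y$ but carries $(-\Delta_{x})^{k+1/2}$; and the $j=0$ term differentiates through $\partial_{y}e^{-y\sqrt{-\Delta_{x}}}=-\sqrt{-\Delta_{x}}e^{-y\sqrt{-\Delta_{x}}}$, producing another $(-\Delta_{x})^{k+1/2}$ contribution. For $0\leq k\leq m-1$ the coefficients of these two surviving pieces should cancel exactly — this is the analogue of the cancellation $\partial_{r}\Delta^{k}V_{m}|_{r=1}=-(\tfrac{n-1}{2}-m+k)\Delta^{k}V_{m}|_{r=1}$ in Lemma \ref{lm4.2}, here sharpened to a clean zero because of the special structure of the half-space Poisson kernel — while for $k=m$ only one of them is present (the sum has a single term $j=0$), giving the nonzero value $(-1)^{m}\frac{\Gamma(m+1)\Gamma(1/2)}{\Gamma(m+1/2)}(-\Delta_{x})^{m+1/2}f$.

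The main obstacle I anticipate is the bookkeeping in the inductive step: keeping track simultaneously of the power of $y$, the half-integer power of $-\Delta_{x}$, and the quotient of Gamma functions, and verifying that the two terms produced by $\Delta w_{j}$ recombine into a single sum of the asserted shape rather than a messier expression. The key technical point making this work is that $\Delta$ preserves the linear span of $\{y^{j}(-\Delta_{x})^{j/2}e^{-y\sqrt{-\Delta_{x}}}f\}_{j\geq0}$ and acts on it as a lower-triangular operator with a controlled sub-diagonal, so that repeated application cannot create the $j$-range from growing. A secondary, purely arithmetic, obstacle is the repeated use of the duplication formula \eqref{3.16} to bring all coefficients into the normalized form stated; this is routine but error-prone, and I would organize it by first proving \eqref{5.9} with the "natural" coefficients coming out of the recursion and only at the very end rewriting them via \eqref{3.16}. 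Once \eqref{5.9} and \eqref{5.10} are in hand, $\Delta^{m+1}u=0$ and the compatibility with the boundary conditions \eqref{1.21} follow with no further work.
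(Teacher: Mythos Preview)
Your proposal is correct and follows essentially the same route as the paper: induction on $k$ for \eqref{5.9} using $\Delta\bigl(y^{j}(-\Delta_{x})^{k+j/2}e^{-y\sqrt{-\Delta_{x}}}f\bigr)=j(j-1)y^{j-2}(-\Delta_{x})^{k+j/2}e^{-y\sqrt{-\Delta_{x}}}f-2jy^{j-1}(-\Delta_{x})^{k+(j+1)/2}e^{-y\sqrt{-\Delta_{x}}}f$, reindexing the two resulting sums, and extracting the factor $-4(m-k)$; then reading off \eqref{5.10} from the $j=0$ term via the duplication formula, and checking the $j=0$/$j=1$ cancellation for $\partial_{y}\Delta^{k}U_{m}|_{y=0}$ when $k\leq m-1$. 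Your suspicion about $\Gamma(m-k-1)$ versus $\Gamma(m-k+1)$ is well-founded: it is a typo in the statement, and the correct denominator is $\Gamma(m-k+1)$.
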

\begin{proof}
We prove (\ref{5.9}) by induction. Obviously,  (\ref{5.9}) is valid for $k=0$. Suppose (\ref{5.9}) is valid for $k$. Then
\begin{equation*}
\begin{split}
&\Delta^{k+1}U_{m}(x,y)=(-1)^{k}\frac{2^{2k-2m}\Gamma(m+1)\Gamma(\frac{1}{2})}{\Gamma(m-k+1)\Gamma(m+\frac{1}{2})}\cdot\\
&\sum^{m-k}_{j=0}\frac{2^{j}}{j!}\frac{\Gamma(2m-2k-j+1)}{\Gamma(m-k-j+1)}
 \Delta\left(y^{j}(-\Delta_{x})^{k+\frac{j}{2}}e^{-y\sqrt{-\Delta_{x}}}f\right)\\
=&(-1)^{k}\frac{2^{2k-2m}\Gamma(m+1)\Gamma(\frac{1}{2})}{\Gamma(m-k+1)\Gamma(m+\frac{1}{2})}
\sum^{m-k}_{j=0}\frac{2^{j}}{j!}\frac{\Gamma(2m-2k-j+1)}{\Gamma(m-k-j+1)}\cdot\\
&
\left(j(j-1)y^{j-2}(-\Delta_{x})^{\frac{j}{2}}-2jy^{j}(-\Delta_{x})^{\frac{1+j}{2}}\right)(-\Delta_{x})^{k}e^{-y\sqrt{-\Delta_{x}}}f\\
=&(-1)^{k}\frac{2^{2k-2m}\Gamma(m+1)\Gamma(\frac{1}{2})}{\Gamma(m-k+1)\Gamma(m+\frac{1}{2})}
\left(\sum^{m-k}_{j=2}\frac{2^{j}}{(j-2)!}\frac{\Gamma(2m-2k-j+1)}{\Gamma(m-k-j+1)}(-\Delta_{x})^{\frac{j}{2}}\right.\\
&-2\left.\sum^{m-k}_{j=1}\frac{2^{j}}{(j-1)!}\frac{\Gamma(2m-2k-j+1)}{\Gamma(m-k-j+1)}(-\Delta_{x})^{\frac{1+j}{2}}  \right)
(-\Delta_{x})^{k}e^{-y\sqrt{-\Delta_{x}}}f.
\end{split}
\end{equation*}
A simple calculation shows 
\begin{equation*}
\begin{split}
&\sum^{m-k}_{j=2}\frac{2^{j}}{(j-2)!}\frac{\Gamma(2m-2k-j+1)}{\Gamma(m-k-j+1)}(-\Delta_{x})^{\frac{j}{2}}
-2\sum^{m-k}_{j=1}\frac{2^{j}}{(j-1)!}\frac{\Gamma(2m-2k-j+1)}{\Gamma(m-k-j+1)}(-\Delta_{x})^{\frac{1+j}{2}}\\
=&\sum^{m-k-2}_{j=0}\frac{2^{j+2}}{j!}\frac{\Gamma(2m-2k-j-1)}{\Gamma(m-k-j-1)}(-\Delta_{x})^{1+\frac{j}{2}}
-2\sum^{m-k-1}_{j=0}\frac{2^{j+4}}{j!}\frac{\Gamma(2m-2k-j)}{\Gamma(m-k-j)}(-\Delta_{x})^{1+\frac{j}{2}}\\
=&4(k-m)\sum^{m-k-1}_{j=0}\frac{2^{j}}{j!}\frac{\Gamma(2m-2k-j-1)}{\Gamma(m-k-j)}
 y^{j}(-\Delta_{x})^{k+1+\frac{j}{2}}e^{-y\sqrt{-\Delta_{x}}}f.
\end{split}
\end{equation*}
and thus we have
\begin{equation*}
\begin{split}
\Delta^{k+1}U_{m}(x,y)=&
(-1)^{k+1}\frac{2^{2k+2-2m}\Gamma(m+1)\Gamma(\frac{1}{2})}{\Gamma(m-k)\Gamma(m+\frac{1}{2})}\cdot\\
&\sum^{m-k-1}_{j=0}\frac{2^{j}}{j!}\frac{\Gamma(2m-2k-j-1)}{\Gamma(m-k-j)}
 y^{j}(-\Delta_{x})^{k+1+\frac{j}{2}}e^{-y\sqrt{-\Delta_{x}}}f
\end{split}
\end{equation*}
These prove (\ref{5.9}). 

By (\ref{5.9}) and (\ref{3.16}), we have
\begin{equation*}
\begin{split}
\Delta^{k}U_{m}(x,y)|_{y=0}=&(-1)^{k}\frac{2^{2k-2m}\Gamma(m+1)\Gamma(\frac{1}{2})}{\Gamma(m-k+1)\Gamma(m+\frac{1}{2})}
\frac{\Gamma(2m-2k+1)}{\Gamma(m-k+1)}(-\Delta)^{k}_{x}f\\
=&\frac{\Gamma(m+1)\Gamma(m+\frac{1}{2}-k)}{\Gamma(m-k-1)\Gamma(m+\frac{1}{2})}\Delta^{k}_{x}f.
\end{split}
\end{equation*}
Similarly, 
$\partial_{y}\Delta^{k}U_{m}(x,y)|_{y=0}=0$ when  $0\leq k\leq m-1$ and
$$\partial_{y}\Delta^{m}U_{m}(x,y)|_{y=0}=(-1)^{m}\frac{\Gamma(m+1)\Gamma(\frac{1}{2})}{\Gamma(m+\frac{1}{2})}(-\Delta_{x})^{m+\frac{1}{2}}f.$$
These complete the  proof of Lemma \ref{lm5.3}
\end{proof}

We can also compute the Neumann boundary condition $\partial^{k}_{y}U_{m}|_{y=0}$ for $0\leq k\leq 2m$.
In fact, we have the following:
\begin{lemma}\label{lm5.4}
There holds, for $0\leq k\leq m$,
\begin{equation}\label{5.11}
\begin{split}
\partial^{2k}_{y}U_{m}(x,y)|_{y=0}=&\frac{\Gamma(k+\frac{1}{2})\Gamma(m-k+\frac{1}{2})}{\Gamma(\frac{1}{2})\Gamma(m+\frac{1}{2})}\Delta_{x}^{k}f
\end{split}
\end{equation}
and for $0\leq k\leq m-1$,
\begin{equation}\label{5.12}
\begin{split}
\partial^{2k+1}_{y}U_{m}(x,y)|_{y=0}=&0.
\end{split}
\end{equation}

\end{lemma}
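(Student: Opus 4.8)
The plan is to read off every boundary jet of $U_m$ directly from the closed form (\ref{5.8}). Abbreviate $A=\sqrt{-\Delta_x}$. Recalling $U_m=u\,y^{-(n-1)/2+m}$ and rewriting $y^{k}(-\Delta_x)^{k/2}e^{-yA}=(-y)^{k}\frac{d^{k}}{dy^{k}}e^{-yA}$, formula (\ref{5.8}) becomes $U_m(x,y)=c_m\,D_m[e^{-yA}f]$, where $c_m=\frac{2^{-2m}\Gamma(1/2)}{\Gamma(m+1/2)}$, which by the duplication formula (\ref{3.16}) equals $\frac{m!}{(2m)!}$, and $D_m=\sum_{k=0}^{m}\frac{2^{k}}{k!}\frac{\Gamma(2m-k+1)}{\Gamma(m-k+1)}(-y)^{k}\frac{d^{k}}{dy^{k}}$ acts in $y$ only. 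Each operator $(-y)^{k}\frac{d^{k}}{dy^{k}}$ sends the monomial $y^{n}$ to $(-1)^{k}\frac{n!}{(n-k)!}y^{n}$, so substituting $e^{-yA}f=\sum_{n\ge 0}\frac{(-1)^{n}y^{n}}{n!}A^{n}f$ gives $D_m[e^{-yA}f]=\sum_{n\ge 0}\frac{(-1)^{n}y^{n}}{n!}\Phi(n)A^{n}f$, with
\[
\Phi(n)=\sum_{k=0}^{m}\frac{(-2)^{k}}{k!}\,\frac{\Gamma(2m-k+1)}{\Gamma(m-k+1)}\,\frac{n!}{(n-k)!},
\]
a polynomial in $n$ of degree $m$ with leading coefficient $(-2)^{m}$. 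Reading off $\partial_y^{l}|_{y=0}$ term by term yields the master formula $\partial_y^{l}U_m(x,y)|_{y=0}=c_m(-1)^{l}A^{l}\Phi(l)f$ for every $l\ge 0$; since $A^{2k}=(-\Delta_x)^{k}$, both (\ref{5.11}) and (\ref{5.12}) become statements about the single polynomial $\Phi$.

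The second step is to evaluate $\Phi$ in closed form. Using $\frac{\Gamma(2m-k+1)}{\Gamma(m-k+1)}=m!\binom{2m-k}{m}$ for $0\le k\le m$ together with the binomial theorem, for every integer $0\le n\le 2m$ one finds $\Phi(n)=m!\,[x^{m}]\big((1+x)^{2m-n}(x-1)^{n}\big)$. When $n$ is odd with $n\le 2m-1$, the degree-$2m$ polynomial $g(x)=(1+x)^{2m-n}(x-1)^{n}$ satisfies $x^{2m}g(1/x)=(-1)^{n}g(x)=-g(x)$, i.e. it is anti-palindromic, so its coefficient of $x^{m}$ must vanish, whence $\Phi(n)=0$. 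Thus the degree-$m$ polynomial $\Phi$ vanishes at the $m$ distinct points $1,3,\dots,2m-1$, which forces $\Phi(n)=(-2)^{m}\prod_{j=1}^{m}\big(n-(2j-1)\big)$, the constant being fixed by the leading coefficient above (or by $\Phi(0)=\frac{(2m)!}{m!}$, which comes only from the $k=0$ term).

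The last step is specialization. For $l=2k+1$ with $0\le k\le m-1$ the argument $2k+1$ lies in $\{1,3,\dots,2m-1\}$, so $\Phi(2k+1)=0$ and the master formula gives $\partial_y^{2k+1}U_m|_{y=0}=0$, which is (\ref{5.12}); alternatively this is forced by (\ref{5.10}) via the triangular identity $\partial_y\Delta^{k}U_m|_{y=0}=\sum_{i=0}^{k}\binom{k}{i}\Delta_x^{k-i}\partial_y^{2i+1}U_m|_{y=0}=0$ and downward induction on $k$. For $l=2k$ with $0\le k\le m$, splitting $\prod_{j=1}^{m}(2k-(2j-1))$ into the $k$ factors with $j\le k$, whose product is $(2k-1)!!=\frac{(2k)!}{2^{k}k!}$, and the $m-k$ factors with $j>k$, whose product is $(-1)^{m-k}(2m-2k-1)!!=(-1)^{m-k}\frac{(2m-2k)!}{2^{m-k}(m-k)!}$, gives $\Phi(2k)=(-1)^{k}\frac{(2k)!(2m-2k)!}{k!(m-k)!}$, so that
\[
\partial_y^{2k}U_m(x,y)|_{y=0}=\frac{m!}{(2m)!}(-\Delta_x)^{k}(-1)^{k}\frac{(2k)!(2m-2k)!}{k!(m-k)!}f=\frac{m!\,(2k)!\,(2m-2k)!}{(2m)!\,k!\,(m-k)!}\Delta_x^{k}f,
\]
and a final application of (\ref{3.16}) identifies the constant with $\frac{\Gamma(k+\frac12)\Gamma(m-k+\frac12)}{\Gamma(\frac12)\Gamma(m+\frac12)}$, proving (\ref{5.11}). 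The one genuinely delicate point is the evaluation of $\Phi$ in the second step — concretely, the vanishing of $[x^{m}]\big((1+x)^{2m-n}(x-1)^{n}\big)$ at the odd values of $n$ — and this is exactly where the anti-palindromic symmetry does all the work; everything else is bookkeeping with the duplication formula.
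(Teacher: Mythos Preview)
Your proof is correct and takes a genuinely different route from the paper. The paper argues by induction on $k$: from (\ref{5.9}) it writes $\partial_y^{2k+2}U_m=\partial_y^{2k}(\Delta U_m-\Delta_x U_m)$, recognizes $\Delta U_m$ as (a multiple of) $\Delta_x$ applied to the analogous expression with $m$ replaced by $m-1$, and then feeds in the inductive hypothesis for both $m$ and $m-1$ to close the recursion for the even derivatives; the odd case is dismissed as analogous. By contrast, you bypass the recursion entirely: you read off every Taylor coefficient at $y=0$ at once via the single polynomial $\Phi$, identify $\Phi(n)/m!$ with $[x^m]\big((1+x)^{2m-n}(x-1)^n\big)$ through the substitution $x-1=(x+1)-2$, and then the anti-palindromic symmetry $x^{2m}g(1/x)=-g(x)$ for odd $n$ kills the middle coefficient, locating all $m$ roots of $\Phi$ simultaneously.

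What this buys you: the argument is self-contained (it does not rely on Lemma~\ref{lm5.3}), it yields the closed form $\Phi(n)=(-2)^m\prod_{j=1}^m(n-(2j-1))$ valid for all $n$, not just $0\le n\le 2m$, and the vanishing at odd $n$ is explained by a clean symmetry rather than an inductive cancellation. The paper's approach, on the other hand, stays closer to the analytic structure already set up (the ladder from $U_m$ to $U_{m-1}$ implicit in (\ref{5.9})) and avoids any combinatorial detour. Your brief aside deriving (\ref{5.12}) alternatively from (\ref{5.10}) via the binomial expansion of $\partial_y\Delta^k$ is essentially the paper's mechanism in disguise, so you have in fact sketched both proofs.
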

\begin{proof}
Firstly, we prove (\ref{5.11}) by induction. It is easy to see (\ref{5.11}) is valid for $k=0$.
Suppose that (\ref{5.11}) is valid for $k$, i.e.,
\begin{equation}\label{5.13}
\begin{split}
&\lim_{y\rightarrow0+}\partial^{2k}_{y}\left(\frac{\Gamma(m+1)}{\Gamma(2m+1)}\sum^{m}_{j=0}\frac{2^{j}}{j!}
\frac{\Gamma(2m-j+1)}{\Gamma(m-j+1)}y^{j}(-\Delta_{x})^{\frac{j}{2}}e^{-y\sqrt{-\Delta_{x}}}f\right)
\\=&\frac{\Gamma(k+\frac{1}{2})\Gamma(m-k+\frac{1}{2})}{\Gamma(\frac{1}{2})\Gamma(m+\frac{1}{2})}\Delta_{x}^{k}f.
\end{split}
\end{equation}
Replacing $m$ by $m-1$ in (\ref{5.13}), we have
\begin{equation}\label{5.14}
\begin{split}
&\lim_{y\rightarrow0+}\partial^{2k}_{y}\left(\frac{\Gamma(m)}{\Gamma(2m-1)}\sum^{m-1}_{j=0}\frac{2^{j}}{j!}
\frac{\Gamma(2m-j-1)}{\Gamma(m-j)}y^{j}(-\Delta_{x})^{\frac{j}{2}}e^{-y\sqrt{-\Delta_{x}}}f\right)
\\=&\frac{\Gamma(k+\frac{1}{2})\Gamma(m-k-\frac{1}{2})}{\Gamma(\frac{1}{2})\Gamma(m-\frac{1}{2})}\Delta_{x}^{k}f.
\end{split}
\end{equation}
By (\ref{5.9}), we have
\begin{equation}\label{5.15}
\begin{split}
&\partial^{2k+2}_{y}U_{m}(x,y)=\partial^{2k}_{y}\left[\Delta U_{m}(x,y)-\Delta_{x}U_{m}(x,y)\right]\\
=&\partial^{2k}_{y}\left[-\frac{m}{m-\frac{1}{2}}\frac{\Gamma(m)}{\Gamma(2m-1)}
\sum^{m-1}_{j=0}\frac{2^{j}}{j!}\frac{\Gamma(2m-2k-j+1)}{\Gamma(m-k-j+1)}
 y^{j}(-\Delta_{x})^{1+\frac{j}{2}}e^{-y\sqrt{-\Delta_{x}}}f\right.\\
&\left.-\frac{\Gamma(m+1)}{\Gamma(2m+1)}\sum^{m}_{j=0}\frac{2^{j}}{j!}
\frac{\Gamma(2m-j+1)}{\Gamma(m-j+1)}y^{j}(-\Delta_{x})^{\frac{j}{2}}e^{-y\sqrt{-\Delta_{x}}}\Delta_{x}f\right]\\
=&\partial^{2k}_{y}\left[\frac{m}{m-\frac{1}{2}}\frac{\Gamma(m)}{\Gamma(2m-1)}
\sum^{m-1}_{j=0}\frac{2^{j}}{j!}\frac{\Gamma(2m-2k-j+1)}{\Gamma(m-k-j+1)}
 y^{j}(-\Delta_{x})^{\frac{j}{2}}e^{-y\sqrt{-\Delta_{x}}}\Delta_{x}f\right.\\
&\left.-\frac{\Gamma(m+1)}{\Gamma(2m+1)}\sum^{m}_{j=0}\frac{2^{j}}{j!}
\frac{\Gamma(2m-j+1)}{\Gamma(m-j+1)}y^{j}(-\Delta_{x})^{\frac{j}{2}}e^{-y\sqrt{-\Delta_{x}}}\Delta_{x}f\right].
\end{split}
\end{equation}
Combing (\ref{5.13})-(\ref{5.15}) yields
\begin{equation*}
\begin{split}
\lim_{y\rightarrow0+}\partial^{2k+2}_{y}U_{m}(x,y)=&\frac{m}{m-\frac{1}{2}}
\frac{\Gamma(k+\frac{1}{2})\Gamma(m-k-\frac{1}{2})}{\Gamma(\frac{1}{2})\Gamma(m-\frac{1}{2})}\Delta_{x}^{k+1}f-
\frac{\Gamma(k+\frac{1}{2})\Gamma(m-k+\frac{1}{2})}{\Gamma(\frac{1}{2})\Gamma(m+\frac{1}{2})}\Delta_{x}^{k+1}f\\
=&\frac{\Gamma(k+1+\frac{1}{2})\Gamma(m-k-\frac{1}{2})}{\Gamma(\frac{1}{2})\Gamma(m+\frac{1}{2})}\Delta_{x}^{k+1}f.
\end{split}
\end{equation*}
These prove (\ref{5.11}).
The proof of (\ref{5.12}) is completely analogous to that of (\ref{5.11}) and we omit.
\end{proof}
\

\textbf{Proof of  Theorem \ref{th1.8}}
With the same argument in the proof of Theorem \ref{th1.6}, we need only consider the case $u=U_{m}(x,y)$.
By Lemma \ref{lm5.3}
and  Green's formula, we have
\begin{equation*}
\begin{split}
0=&\int_{\mathbb{R}_{+}^{n+1}}U_{m}\Delta^{m+1}U_{m}dxdy\\
=&\int_{\mathbb{R}_{+}^{n+1}}\Delta U_{m}\Delta^{m}U_{m}dxdy+\int_{\mathbb{R}^{n}}U_{m}\partial_{y}\Delta^{m}U_{m}dx-
\int_{\mathbb{R}^{n}}\partial_{y}U_{m}\Delta^{m}U_{m}dx\\
=&\int_{\mathbb{R}_{+}^{n+1}}\Delta U_{m}\Delta^{m}U_{m}dxdy+(-1)^{m}\frac{\Gamma(m+1)\Gamma(\frac{1}{2})}{\Gamma(m+\frac{1}{2})}
\int_{\mathbb{R}^{n}}
U_{m}(x,0)(-\Delta_{x})^{m+\frac{1}{2}}U_{m}(x,0)dx\\
=&(-1)^{m-1}\int_{\mathbb{R}_{+}^{n+1}}|\nabla^{m+1} U_{m}|^{2}dxdy+(-1)^{m}\frac{\Gamma(m+1)\Gamma(\frac{1}{2})}{\Gamma(m+\frac{1}{2})}
\int_{\mathbb{R}^{n}}
U_{m}(x,0)(-\Delta_{x})^{m+\frac{1}{2}}U_{m}(x,0)dx.
\end{split}
\end{equation*}
Therefore, by Theorem \ref{be2},
we have
\begin{equation*}
\begin{split}
\int_{\mathbb{R}_{+}^{n+1}}|\nabla^{m+1} U_{m}(x,y)|^{2}&dxdy=\frac{\Gamma(m+1)\Gamma(\frac{1}{2})}{\Gamma(m+\frac{1}{2})}
\int_{\mathbb{R}^{n}}
U_{m}(x,0)(-\Delta_{x})^{m+\frac{1}{2}}U_{m}(x,0)dx\\
\geq &\frac{\Gamma(m+1)\Gamma(\frac{1}{2})}{\Gamma(m+\frac{1}{2})}\frac{\Gamma(\frac{n+2m+1}{2})}{\Gamma(\frac{n-2m-1}{2})}
\omega^{\frac{2m+1}{n}}_{n}\left(\int_{\mathbb{R}^{n}}|U_{m}(x,0)|^{\frac{2n}{n-2m-1}}dx\right)^{\frac{n-2m-1}{n}}.
\end{split}
\end{equation*}
With the same argument in Theorem \ref{th1.6} and using Theorem \ref{be2}, we have get the only
only extremal function is that given by (\ref{b1.23}). 
These complete the proof of Theorem \ref{th1.8}.

\end{document}